\newtheorem{theorem}{Theorem}[section]
\newtheorem{claim}[theorem]{Claim}
\newtheorem{subclaim}[theorem]{Subclaim}
\newtheorem{lemma}[theorem]{Lemma}
\newtheorem{conj}[theorem]{Conjecture}
\newcounter{propcounter}
\theoremstyle{definition}
\newtheorem{definition}[theorem]{Definition}
\newcommand{\eps}{\varepsilon}
\newcommand{\<}{\subseteq}
\newcommand{\ft}[1]{\footnote{#1}}
\title{\LARGE Embedding clique subdivisions via crux}
\author{
Donglei Yang\thanks{School of Mathematics, Shandong University, Jinan, Shandong, China. Email: {\tt dlyang@sdu.edu.cn}. Supported by Natural Science Foundation of China (12101365) and by Natural Science Foundation of Shandong Province (ZR2021QA029).}
\and
Fan Yang \thanks{Data Science Institute, Shandong University, Jinan, Shandong, China. Email: {\tt fyang@sdu.edu.cn}. Supported by Natural Science Foundation of China (12301447) and by China Postdoctoral Science Foundation (12570073310023).}
\and
}
\date{}
\begin{document}
\maketitle
\begin{abstract}
For a graph $G$ and a constant $\alpha>0$, we denote by $C_{\alpha}(G)$ the minimum order of a subgraph $H\<G$ with $d(H)\ge \alpha d(G)$. Liu and Montgomery conjectured that every graph $G$ contains $K_{\Omega(t)}$ as a subdivision for $t=\min \{d(G), \sqrt{\tfrac{C_{\alpha}(G)}{\log C_{\alpha}(G)}}\}$. In the paper, we prove this conjecture.
\end{abstract}

\section{Introduction}
For a graph $H$, a \emph{subdivision} of $H$, denoted by $TH$, is a graph obtained by replacing edges of $H$ by internally vertex-disjoint paths. This is a fundamental concept for studying topological aspects of graphs since Kuratowski \cite{Kura} (in 1930) used this notion to characterize the planar graphs, proving that a graph is planar if and only if it does not contain a subdivision of $K_5$ or $K_{3,3}$. A well-studied direction of research in extremal graph theory is to find sufficient conditions on a graph $G$ that would guarantee the existence of an $H$-subdivision in $G$.

Such problem has received considerable attention since the work of Mader \cite{ma67} from 1967 on the optimal average degree conditions forcing subdivisions of large cliques. More precisely, Mader showed that for every $k\in \mathbb{N}$, there exists (finite) $f(k)$ such that every graph $G$ with average degree at least $f(k)$ contains a $TK_k$. Mader \cite{ma67}, and independently Erd\H{o}s and Hajnal \cite{Erd}, conjectured that one can take $f(k)=O(k^2)$. This conjecture was finally resolved in 1999 by Bollob\'{a}s and Thomason \cite{BolTto} and independently by Koml\'{o}s and Szemer\'{e}di \cite{KS}. In fact, Jung \cite{Jung} observed that $K_{k^2/8,k^2/8}$ does not contain $TK_k$, which implies that the quadratic bound on $f(k)$ is best possible.

Note that the extremal examples mentioned above consist of (disjoint union of) complete bipartite graphs. The following question naturally arises: can we find a larger clique subdivision in $G$ if it does not structurally look like (disjoint union of) dense bipartite graph? In 1999, Mader \cite{C4free} conjectured that any $C_4$-free graph contains a subdivision of a clique with order linear in its average degree. This linear bound is essentially optimal under the obvious ``degree constraint'' as the host graph could be regular. After some partial results (see e.g.~\cite{C6free,largegirth1,largegirth2}), Mader's conjecture was subsequently resolved by Liu and Montgomery \cite{LiuC4} in 2017. They also proved an analogous result for $K_{s,t}$-free graphs, asserting that for $t\geq s\geq 2$, every $K_{s,t}$-free graph $G$ with $d(G)=d$ contains a $TK_{\Omega(d^{s/2(s-1)})}$.

The exclusion of $K_{s,t}$ has proven to be a good way to formalize the stability-type question aforementioned. Liu and Montgomery \cite{LiuC4} also suggested another way to formalize this: whether a graph with no small subgraphs which are almost as dense as the parent graph must contain a large clique subdivision? 
To formulate this, they introduced a new graph parameter depicting the order of the smallest dense patch of a
graph, which was recently named \emph{crux} by Haslegrave Hu, Kim, Liu, Luan and Wang \cite{Hasle}. The notion of crux has recently been used to capture the ``essential order'' of a graph with applications to finding long cycles in graphs and random subgraphs (see e.g.~\cite{Hasle}), though similar idea already appeared implicitly in previous works due to for example Krivelevich and Samotij \cite{KS-cycle}.
\begin{definition}[Crux]
  Given a graph $G$ and a constant $\alpha >0$, a subgraph $H\<G$ is an \emph{$\alpha$-crux} of $G$ if $d(H)\ge \alpha d(G)$. Let $C_{\alpha}(G)$ be the minimum order over all $\alpha$-cruxs of $G$.
\end{definition}


\noindent Observing that the disjoint union of dense bipartite graphs have $C_{\alpha}(G)$ linear in the average degree, Liu and Montgomery \cite{LiuC4} raised the following conjecture by taking $\alpha=\tfrac{1}{100}$.

\begin{conj}[\cite{LiuC4}]\label{conj}
There exists some constant $c>0$ such that every graph $G$ contains a subdivision of a clique with at least
$c\min \Big\{d(G), \sqrt{\tfrac{C_{\alpha}(G)}{\log C_{\alpha}(G)}}\Big\}$ vertices.
\end{conj}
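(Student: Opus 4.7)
The overall strategy is to follow the sublinear--expander framework of Koml\'os--Szemer\'edi, substantially refined by Liu--Montgomery in their resolution of Mader's $C_4$-free conjecture, and to run all constructions inside a minimum $\alpha$-crux of $G$. The proof splits naturally into three stages: (i) pass from $G$ to a sublinear expander subgraph $H$ contained in a minimum crux while preserving average degree $\Omega(d(G))$; (ii) select branch vertices of high degree and reserve at each of them a family of disjoint short ``brooms''; (iii) connect the brooms of every pair of branch vertices by a short internally disjoint middle path.

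For stage (i), let $H_0\<G$ realize $C_\alpha(G)$, so $|V(H_0)|=C_\alpha(G)$ and $d(H_0)\ge \alpha d(G)$. Passing first to a subgraph of minimum degree at least $d(H_0)/2$ and then applying the Koml\'os--Szemer\'edi lemma yields a subgraph $H\<H_0$ on $n_0\le C_\alpha(G)$ vertices that is an $(\varepsilon_1,\varepsilon_2/\log^2 n_0)$-sublinear expander with $d(H)=\Omega(d(G))$. Setting $t:=c\min\{d(G),\sqrt{C_\alpha(G)/\log C_\alpha(G)}\}$ for a small constant $c>0$, stage (ii) then picks branch vertices $b_1,\dots,b_t$ of degree $\Omega(d(G))\ge t$ in $H$, reserves $t-1$ private neighbours at each $b_i$ (the \emph{ports}), and from every port grows a disjoint broom of size $m:=d(G)\cdot \mathrm{polylog}\,n_0$ using one or two rounds of expansion.

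For stage (iii), iterate greedily over the $\binom{t}{2}$ pairs $(b_i,b_j)$. In the subgraph $H'$ obtained from $H$ by deleting the interiors of previously constructed paths, apply the two-sided ball-growing argument: the brooms at $b_i$ and at $b_j$ in $H'$ already occupy $\ge m$ vertices each, so by sublinear expansion they each reach a set of size $n_0/2$ in $O(\log n_0)$ further expansion steps, and these two sets must intersect, yielding a middle path of length $O(\log n_0)$. The total number of interior vertices consumed over all pairs is $O(t^2 \log C_\alpha(G))$, which fits inside $H$ precisely because $t\le\sqrt{C_\alpha(G)/\log C_\alpha(G)}$. The main obstacle is this tight arithmetic: a naive BFS would produce paths of length $O(\log^2 n_0)$, costing an extra log factor and only proving the weaker bound with $\log^3$ in place of $\log$. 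Shaving the exponent down to $\log$, as demanded by the conjecture, requires the full ``expansion into a pre-enlarged target'' technique of Liu--Montgomery, together with careful bookkeeping that discards only a $1/\mathrm{polylog}$ fraction of vertices at each step so that the expansion constants degrade by at most a $1-o(1)$ factor over all $\binom{t}{2}$ iterations; a secondary, largely routine, difficulty is the standard parity/adjuster fix needed to turn the resulting $TK_t$-minor into an honest $TK_t$.
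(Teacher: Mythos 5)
Your high-level plan (pass to a minimum $\alpha$-crux, extract a sublinear expander, build star-like structures at $t$ branch vertices, and greedily wire up the $\binom{t}{2}$ pairs with short internally-disjoint paths) follows the same skeleton as the paper, but the core difficulty of the extremal regime is not addressed, and the fix you invoke does not repair it. In the space-barrier regime one has $C_\alpha(G)=\Theta(n_0)$, $t=\Theta(\sqrt{n_0/\log n_0})$, and every connecting path has length $\Theta(\log n_0)$; the total number of interior vertices laid down over the $\binom{t}{2}$ iterations is therefore $\Theta(t^2\log n_0)=\Theta(n_0)$, i.e.\ a \emph{constant fraction} of the host graph. The standard Robust Connectivity Lemma (Lemma~\ref{distance}) only tolerates an avoided set $W$ of size at most $\rho(x)x/4=O(x/\log^2 x)$ where $x$ is the size of the sets being connected, and no variant of ``expansion into a pre-enlarged target'' lifts this to $\Omega(n_0)$ --- that technique reduces path length (trading $\log^3$ for fewer logs) but does not change the size of the forbidden set the lemma can absorb. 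In fact deleting a linear fraction of vertices from a sublinear expander can destroy its expansion entirely, so the two balls you grow in $H'$ need not reach size $n_0/2$ at all. This is precisely the obstacle the paper flags explicitly: a \emph{stronger} connecting tool is required, not a tighter length bound.

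The paper's actual fix is a genuinely new ingredient your proposal lacks: randomly split $V(G)=V_1\cup V_2$, grow the long (length-$\Theta(\log n)$) branches of the web inside $V_1$, and route the inter-web connections through $V_2$ using the new Robust Connecting Lemma (Lemma~\ref{random_distance}, adapted from Buci\'c--Montgomery). Because the connecting paths live in $V_2$ and the $\Omega(n)$-many expensive interior vertices live in $V_1$, the latter are avoided for free, and the parts of $W$ in $V_2$ total only $t^2 m^{O(1)}=o(C_\beta(G))$, which the random-set connecting lemma can handle. Your proposal keeps everything in one vertex set, so the obstruction stands. Two smaller issues: (a) you build ``brooms'' (single-level units), whereas the paper needs two-level webs to stockpile enough exterior leaves so that good webs still have $\Omega(ct^2 m^x)$ available exterior after bookkeeping; and (b) the closing remark about a ``parity/adjuster fix to turn the $TK_t$-minor into an honest $TK_t$'' is a misunderstanding --- the unit/web construction produces internally disjoint paths directly and yields a topological clique with no minor-to-subdivision conversion step. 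Finally, the paper also needs a three-way case split on the size of $C_\beta(G)$ relative to $d^2$ (Lemmas~\ref{dense1}, \ref{dense2}, \ref{lem_dense}), which your single pipeline omits; the regime where $C_\beta(G)\ll d^2$ requires a different argument since then $t\ll d(G)$ and the crux is smaller than the available degree.
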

\noindent If true, this bound is tight: as noted in \cite{LiuC4}, the $d/1000$-blow-up $G$ of a $d$-vertex $1000$-regular expander satisfies $C_{\alpha}(G)=\Theta(d^2)$ and the largest clique subdivision has order $O(d/\sqrt{\log d})$. For convenience, we will often call this construction a \textit{space barrier}. To see this, note that $G$ has $\Omega(d^2)$ vertices and any $K_t$-subdivision in $G$ has $\tbinom{t}{2}$ internally vertex-disjoint paths. As discussed in \cite{LiuC4}, when $t$ is close to $d$, among any collection of $t$ vertices in $G$, there are at least $\tfrac{1}{4}t^2$ pairs of vertices that are far apart from each other by a distance $\Omega(\log d)$. This tells that at least half of the subdivided paths must contain at least $\Omega(\log d)$ vertices, which implies $t=O(d/\sqrt{\log d})$.

Note that Im, Kim, Kim and Liu \cite{crux} recently proved a weaker bound for $\alpha=\tfrac{1}{100}$.
\begin{theorem}[\cite{crux}]\label{im}
There exists an absolute constant $\beta>0$ such that the following is true.
Let $G$ be a graph with $d(G)=d$. Then $G$ contains a $K_{\beta t/(\log\log t)^6}$-subdivision where
$$t=  \min\Big\{d, \sqrt{\tfrac{C_\alpha(G)}{\log C_\alpha(G) }}\Big\}.$$
\end{theorem}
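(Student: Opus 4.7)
The plan is to follow the Koml\'os--Szemer\'edi framework for building clique subdivisions, localised to the crux. First I would pass to a ``working host'' subgraph $H \subseteq G$ with $v(H) = O(C_\alpha(G))$ and $d(H) = \Omega(d)$ that is a sublinear expander, in the sense that every set $S$ of size $k$ satisfies $|N(S)\setminus S| \geq \eps(k) |S|$ with $\eps(k) = \Theta\bigl(1/\log^2(C_\alpha(G)/k)\bigr)$. This is obtained by taking a minimum $\alpha$-crux of $G$ and then invoking the Koml\'os--Szemer\'edi sublinear expander lemma \cite{KS} inside it. The crucial gain over a na\"\i ve application to $G$ itself is that the expansion scale now depends on $C_\alpha(G)$ rather than $v(G)$, which is what makes the $\sqrt{C_\alpha(G)/\log C_\alpha(G)}$ bound even plausible.

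Inside $H$ I would next isolate a set $U$ of $\Theta(t)$ candidate branch vertices with ``large expansion balls'': for some radius $r = O(\log C_\alpha(G))$, each $u \in U$ has a BFS-ball of radius $r$ of size $\Omega(C_\alpha(G)/t)$ inside $H$. Such $U$ is produced by a standard averaging and cleaning procedure on a high-minimum-degree subgraph, using the sublinear expansion to convert degree into ball size. These are the hubs that will eventually become the principal vertices of the clique subdivision.

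The core step is then to connect every pair $\{u,v\} \in \binom{U}{2}$ by internally vertex-disjoint paths of length $O(\log C_\alpha(G))$, yielding the desired $TK_{|U|}$. For this I would employ the Liu--Montgomery-style adjuster/absorber machinery of \cite{LiuC4}: maintain a small reservoir inside each hub's expansion ball from which path extensions can be drawn, together with a global pool of ``robust'' vertices through which partial paths can be re-routed, using the sublinear expansion to dodge vertices already committed to earlier paths. The total vertex budget consumed by all paths is at most $|U|^2 \cdot O(\log C_\alpha(G)) = O(t^2 \log C_\alpha(G)) = O(C_\alpha(G))$, which safely fits inside $H$.

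The main obstacle, and the source of the $(\log\log t)^6$ loss in Theorem \ref{im} compared with the conjectured bound, is packing these $\Theta(t^2)$ short paths simultaneously while keeping the expansion usable. Each individual connection needs only a polylogarithmic expansion budget, but after many paths have been built, the remaining ``unused'' portion of the sublinear expander must still accommodate the next path, and the cleaning steps needed to restore good expansion for the remaining hubs each cost a $\log\log$ factor. Tracking these losses through the nested path-connection procedure is what accumulates to $(\log\log t)^6$. Eliminating this loss altogether --- the goal of the present paper --- would require a more efficient absorption scheme in which the expansion of the residual graph does not degrade cumulatively with the number of already-built paths.
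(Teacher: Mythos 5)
The high-level framework you describe --- pass to a minimum crux, invoke the sublinear expander machinery inside it, build hub structures, then pack pairwise short connections --- is indeed the broad strategy underlying \cite{crux}, which the present paper only cites and does not reprove. But two points undercut the sketch as written. First, there is an internal inconsistency: you isolate $\Theta(t)$ hubs and claim pairwise connecting paths of length $O(\log C_\alpha(G))$, for a total budget $O(t^2\log C_\alpha(G))=O(C_\alpha(G))$. If both of those were simultaneously achievable you would have proved Conjecture~\ref{conj} in full, not the weaker Theorem~\ref{im} with its $(\log\log t)^6$ loss. In reality, as the present paper's introduction explains, the connection tool available to \cite{crux} (a variant of Lemma~\ref{distance}) only yields paths of length $\log n\,(\log\log n)^{O(1)}$, and the hub structures that \cite{crux} can build --- the webs of Definition~\ref{defn:web}, assembled by linking disjoint stars and then averaging --- have a number of branches that is \emph{sublinear} in $d$. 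These two concrete technical limits, not an unspecified accumulation of ``cleaning'' costs, are what force the $(\log\log t)^6$ factor, and removing exactly those two limits is the stated contribution of the present paper.

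Second, ``hubs with large BFS balls'' is not by itself a usable structure for the connection step. Many subdivided paths must terminate at the same principal vertex; the unit/web structure with its wide exterior of leaf-stars is precisely what guarantees those terminations can be made disjoint, and a bare expansion ball has no such built-in disjointness. Relatedly, neither \cite{LiuC4} nor \cite{crux} uses absorber machinery in the sense that term usually carries; the relevant toolkit is the unit/web construction together with robust short connections in sublinear expanders, so invoking ``adjuster/absorber'' language points in a somewhat different direction from what those proofs actually do.
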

\noindent
Our main result answers Conjectur~\ref{conj} in the affirmative.

\begin{theorem}\label{main1}
There exists an absolute constant $c>0$ such that the following is true for any $0<\alpha\le\tfrac{1}{9\times 10^4}$. Let $G$ be a graph with $d(G)=d$. Then $G$ contains a $K_{ct}$-subdivision where
\[t=\min\Big\{d,\sqrt{\tfrac{C_\alpha(G)}{\log{C_\alpha(G)}}}\Big\}.
\]
\end{theorem}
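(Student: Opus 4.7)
My overall strategy follows the Koml\'{o}s--Szemer\'{e}di sublinear expander framework pioneered by Liu and Montgomery \cite{LiuC4}. The first step is to pass to a subgraph $G'\subseteq G$ that is an $(\eps_1,\eps_2)$-sublinear expander with $d(G')\ge d/2$ and minimum degree of order $d$; this is a standard move. Depending on which quantity achieves the minimum in $t$, the proof splits into two regimes. When $t=d$ (so that $C_\alpha(G)$ is large) one aims directly for a linear-sized clique subdivision in $G'$. When $t=\sqrt{C_\alpha(G)/\log C_\alpha(G)}$ one restricts instead to a minimum-order $\alpha$-crux $H\subseteq G$ and performs the entire construction inside $H$; the order of $H$, namely $\Theta(t^2\log t)$, is precisely the total vertex budget available for the $\binom{t}{2}$ internally vertex-disjoint paths of average length $\Theta(\log t)$ needed to realize a $K_t$-subdivision.

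\textbf{Branch vertices and connection.} After this reduction, I would select a set $B$ of $\Theta(t)$ candidate branch vertices, each equipped with disjoint ``hub'' neighbourhoods of polynomial size, using a regularization together with a greedy degree-pruning argument. The core step is then a connection lemma: in a sublinear expander, for any two such hubs one can build a path between them of length $O(\log n)$ while avoiding any preassigned small forbidden set. Iterating this $\binom{|B|}{2}$ times -- adding the freshly used internal vertices to the forbidden set after each step -- yields the desired clique subdivision. The whole problem thus reduces to a sufficiently robust connection lemma: inside an expander on $n$ vertices with a forbidden set of size up to $O(t^2\log n)$ removed, any two hubs of polynomial size can still be joined by a path of length $O(\log n)$.

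\textbf{Main obstacle.} The central difficulty -- and the source of the $(\log\log t)^6$ loss in the previous bound of Im, Kim, Kim and Liu (Theorem~\ref{im}) -- is that the standard Koml\'{o}s--Szemer\'{e}di ball-expansion argument bleeds small factors at each regularization step, while the Liu--Montgomery ``adjuster'' gadgets used to fine-tune path lengths are themselves built with iterated-logarithmic overhead. To remove this loss I anticipate two key ingredients. First, a robust-expansion lemma asserting that the sublinear expansion of $G'$ survives the deletion of up to $\Theta(n/\mathrm{polylog}\,n)$ vertices, so that each of the $\binom{t}{2}$ sequential path searches succeeds with room to spare. Second, an adjuster construction whose length is $O(\log n)$ with no iterated-logarithmic overhead, so that the total subdivision length matches the vertex budget tightly. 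Assembling these into a simultaneous path-finding argument -- tracking how expansion degrades across $\Theta(t^2)$ iterations without introducing any $\log\log$ slack -- is where the bulk of the technical work is certain to live.
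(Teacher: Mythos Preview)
Your high-level framework—pass to a sublinear expander, restrict to a crux in the extremal regime, build hubs and connect—is sound, but the two ingredients you single out for removing the $(\log\log t)^6$ loss do not work. Adjusters are a red herring: they are used to control path lengths exactly (for \emph{balanced} subdivisions), whereas here one only needs paths of length $O(\log n)$, so there is nothing to fine-tune; the loss in Theorem~\ref{im} does not come from adjusters. More seriously, a robust-expansion lemma surviving the deletion of $\Theta(n/\mathrm{polylog}\,n)$ vertices is insufficient. In the extremal regime $t=\sqrt{C_\alpha(G)/\log C_\alpha(G)}$, after restricting to a crux of order $n=C_\alpha(G)$, building $\Theta(t)$ hub structures each containing $\Theta(t)$ internally disjoint paths of length $\Theta(\log n)$ already forces the previously built interiors to occupy $\Theta(t^2\log n)=\Theta(C_\alpha(G))=\Theta(n)$ vertices. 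The forbidden set at each step is thus a constant fraction of $n$, not $n/\mathrm{polylog}\,n$; no strengthening of Lemma~\ref{distance} can route around that. This is precisely the space barrier, and it is why your iterative scheme as stated cannot close.

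The paper overcomes this with two ideas absent from your sketch. First, crux-based robustness (Lemmas~\ref{prop} and~\ref{ball}): any graph with $\delta\ge d(G)/2$ is $(\tfrac{C_\alpha(G)}{2},\Omega(1))$-dense, and one can grow a ball of size $\Omega(C_\alpha(G))$ around any vertex even after removing $\Theta(t)$ consecutive shortest paths from it. Second, and crucially, a random bipartition $V(G)=V_1\cup V_2$: all ball-growing and the associated consecutive shortest paths are confined to $V_1$, while all connections are routed \emph{through} $V_2$ via a random-set connection lemma (Lemma~\ref{random_distance}, adapted from Buci\'{c}--Montgomery). The $\Theta(n)$-sized obstruction then lives entirely in $V_1$ and is automatically avoided by the connections; the residual forbidden set inside $V_2$ has size only $t^2\,\mathrm{polylog}(n/k)=o(C_\alpha(G))$, small enough for the connection lemma to handle. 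This separation of ball-growing from connecting into disjoint random halves is the heart of the argument, and without it the $\Theta(n)$ accounting you need does not balance.
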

We make no attempt to optimize the constant coefficients $c$ and $\alpha$ in Theorem \ref{main1}. \medskip

\noindent\textbf{Technical Contribution.} Our approach builds on the recently-developed sublinear expander theory and introduces new perspectives on the $\alpha$-crux number $C_{\alpha}(G)$. 
To build large clique subdivisions, we adopt a strategy of Liu and Montgomery \cite{LiuC4} (also appeared in previous works \cite{montgo}), namely, to find a sufficient number of star-like structures called ``units'' that serve as the bases for constructing the required subdivision. To do this, it is desired that all these units do not overlap too much and each has a large exterior and (relatively) small interior. Then the arguments often reduce to robustly connecting the exteriors with disjoint short paths whilst avoiding the interiors, using the connectivity provided by the sublinear expander. In this paper, we use a variant of unit structure called \textit{web} (see Definition \ref{defn:web}) and the bulk of the work is therefore to construct a family of webs as above within a sublinear expander. Note that the extremal construction (space barrier) tells that the (almost half) subdivided paths in the $K_{\Omega(t)}$-subdivision has length $\Theta(\log n)$. This requires us to be able to find disjoint short connections of length $O(\log n)$ , which is in fact a surprisingly challenging task in previous applications of sublinear expander (e.g. in \cite{crux}, they were able to find disjoint subdivided paths of length $\log n(\log\log n)^{O(1)}$). To see this, note that in the space barrier, all the subdivided paths have too many (up to $\Omega(n)$) vertices to naively find one more path which avoids previous paths just by applying the commonly-used Lemma~\ref{distance}. This is where our new perspectives on the $\alpha$-crux number and a random variant of Lemma~\ref{distance} come into play (see Section~\ref{sec_3.1} for a more elaborate sketch of the proof)..

The proof of Theorem \ref{main1} is divided into three main cases by distinguishing the value of the $\alpha$-crux number $C_{\alpha}(G)$. We employ an unified construction by
combining our new perspectives on the $\alpha$-crux number with an adaptation of random partitions for sublinear expanders appearing in a very recent work of Buci\'{c} and Montgomery \cite{Mon-cycle}.

%

\section{Preliminaries}\label{Pre2}
\subsection{Notation}
For any positive integer $r$, we write $[r]$ for the set $\{1, \ldots, r\}$.
Given a graph $G=(V,E)$, we denote by $v(G),e(G),\Delta(G),\delta(G)$ the number of vertices in $G$, the size of $G$, the maximum degree of $G$ and the minimum degree of $G$. Given a set $W\subseteq V(G)$, we write $N_G(W)=(\bigcup_{u\in W}N_G(u))\setminus W$. Furthermore, set $N_G^{0}(W):=W$, $N_G^1(W):=N_G(W)$ and $N_G[W]=N_G(W)\cup W$, and for each $i\geq 1$, define $N_G^{i+1}(W):=N(N_G^{i}(W))\setminus N_G^{(i-1)}(W)$. Denote by $B_G^r(W)$ the ball of radius $r$ around $W$, that is, $B_G^r(W)=\bigcup_{i\leq r}N_G^{i}(W)$. For simplicity, write $B_G^r(v)=B_G^r(\{v\})$. For any set $W\subset V(G)$, the subgraph of $G$ induced on $W$, denoted as $G[W]$, is the graph with vertex set $W$ and edge set $\{xy\in E(G)|x,y\in W\}$, and write $G-W=G[V(G)\setminus W]$.
For any $A,B\subseteq V(G)$, we denote by $G[A,B]$ the graph with vertex set $A\cup B$ and edge set $\{xy\in E(G)|x\in A, y\in B\}$. We simply use $e_G(A,B)=|E(G[A,B])|$. 

For a path $P$, the length of $P$ is the number of edges in $P$, and we say $P$ is an $x,y$-path if $x$ and $y$ are the endvertices of $P$. A star is called \emph{$x$-star} for some $x\in \mathbb{N}$ if it has exactly $x$ leaves. Given a family of graphs $\mathcal{F}$, denote by $|\mathcal{F}|$ the number of graphs in $\mathcal{F}$ and we write $V(\mathcal{F})=\bigcup_{G\in \mathcal{F}}V(G)$.

Throughout the paper, we will omit floor and ceiling signs when they are not essential. Also, we use standard hierarchy notation, that is, we write $a\ll b$ to denote that given $b$ one can choose $a_0$ such that the subsequent arguments hold for all $0<a\le a_0$. All logarithms except labeled one are natural.

\subsection{Sublinear expander}
Koml\'{o}s and Szemer\'{e}di \cite{KS1,KS} introduced a notion of sublinear expander that is a graph in which any subset of vertices of reasonable size expands by a sublinear factor. Significant progress in several seemingly unrelated long-standing open problems has been established by using techniques derived in the notion of sublinear expander. Remarkable applications include the resolution of Erd\H{o}s and Hajnal's odd cycle problem from 1966 \cite{odcy} and a series of breakthroughs concerning cycle decomposition (see \cite{Mon-cycle}), graph Ramsey goodness (see \cite{HHKL}), graph minor (see \cite{minorex}) and so on. For more recent applications of sublinear expanders to various problems in Extremal Combinatorics, we refer readers to a comprehensive survey of Letzter \cite{letz}.

Koml\'{o}s and Szemer\'{e}di \cite{KS} showed that every graph $G$ contains a sublinear expander almost as dense as $G$. Haslegrave, Kim and Liu \cite{Hasle2} slightly
generalised this to a robust version.
\begin{definition}[\cite{Hasle2,KS1,KS}]\label{def_sub}
Let $\varepsilon_1>0$ and $k\in \mathbb{N}$. A graph $G$ is an \emph{$(\varepsilon_1,k)$-robust-expander} if for any $X\subseteq V(G)$ of size $\tfrac{k}{2}\leq |X|\leq \tfrac{2|V(G)|}{3}$\ft{the coefficient $2/3$ makes no difference and can be easily verified to be valid by going through their proof in \cite{Hasle2}}, and any subgraph $F\subseteq G$ with $e(F)\leq d(G)\rho(|X|,\varepsilon_1,k)\cdot|X|$, we have
\begin{equation*}
|N_{G\setminus F}(X)|\geq \rho(|X|,\varepsilon_1,k)\cdot|X|
\end{equation*}
where
\[
\rho(x,\varepsilon_1,k):=\begin{cases}
		0 & \text{if }x<k/5,\\
		\eps_{1}/\log^2(15x/k) & \text{if }x\ge k/5.
	\end{cases}
\]
\end{definition}

\noindent
We often say such $G$ is an \emph{$(\eps_1,k)$-expander} if  $e(F)=0$. For simplicity, we write $\rho(x)$ for $\rho(x,\varepsilon_1,k)$. Note that $\rho(x)$ is a decreasing function when $x\geq \tfrac{k}{5}$.

\begin{lemma}[\cite{Hasle2,KS}]\label{expander}
There exists $\eps>0$ such that the following holds for every $k>0$. Every graph $G$ has an $(\eps, k)$-robust-expander $H$ with $d(H)\ge d(G)/2$ and $\delta(H)\ge d(H)/2$.
\end{lemma}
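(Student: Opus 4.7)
My plan is to build $H$ by iteratively refining $G$, alternating a pruning pass (to enforce $\delta \geq d/2$) with an expander-defect step (to chase down dense subgraphs when expansion fails). For pruning, given any graph $H_i$ I repeatedly delete a vertex whose degree is less than $d(H_i)/2$; a short calculation shows $d(H_{i+1}) \geq d(H_i)$ at every deletion, so the average degree is nondecreasing, and the process terminates in a nonempty subgraph satisfying $\delta \geq d/2$. Applying this to $G$ first gives $H_0$ with $d(H_0) \geq d(G)$ and $\delta(H_0) \geq d(H_0)/2$.

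The main loop is then: if the current $H_i$ is an $(\eps, k)$-robust-expander, stop. Otherwise, there exist $X_i$ with $k/2 \leq |X_i| \leq 2|V(H_i)|/3$ and $F_i \subseteq H_i$ with $e(F_i) \leq d(H_i)\rho(|X_i|)|X_i|$ such that $|N_{H_i\setminus F_i}(X_i)| < \rho(|X_i|)|X_i|$. Setting $Y_i := X_i \cup N_{H_i\setminus F_i}(X_i)$, every edge from $X_i$ to $V(H_i)\setminus Y_i$ must lie in $F_i$, so
\[
e(H_i) \;\leq\; e(H_i[Y_i]) + e(H_i[V(H_i)\setminus X_i]) + e(F_i).
\]
A weighted-average computation using $|Y_i| \leq (1+\rho(|X_i|))|X_i|$ and $|X_i| \leq 2|V(H_i)|/3$ then forces
\[
\max\bigl\{d(H_i[Y_i]),\, d(H_i[V(H_i)\setminus X_i])\bigr\} \;\geq\; d(H_i)\bigl(1 - O(\rho(|X_i|))\bigr),
\]
and I replace $H_i$ by the winning side and re-prune.

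Termination is clear since $|V(H_i)|$ is strictly decreasing and must eventually drop below $3k/4$, at which point the expansion condition holds vacuously. The step I expect to be the main obstacle is controlling the accumulated multiplicative loss $\prod_i(1 - O(\rho(|X_i|)))$ by a universal constant independent of $|V(G)|$ and $k$. The definition $\rho(x) = \eps_1/\log^2(15x/k)$ is calibrated exactly so that, if one arranges (possibly by a clever choice between $Y_i$ and $V(H_i)\setminus X_i$, or by always picking a minimum-size witness $X_i$) for the sequence $|X_i|$ to shrink essentially geometrically, then $\log(15|X_i|/k)$ forms an essentially arithmetic progression with constant common difference, and
\[
\sum_i \rho(|X_i|) \;=\; O\!\Bigl(\eps_1 \sum_{m \geq 2} m^{-2}\Bigr) \;=\; O(\eps_1).
\]
Taking $\eps = \eps_1$ small enough then makes the product exceed $1/2$, so the final graph $H$ satisfies $d(H) \geq d(G)/2$, while the closing pruning pass preserves $\delta(H) \geq d(H)/2$, as required.
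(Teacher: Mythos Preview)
The paper does not supply its own proof of Lemma~\ref{expander}; it is quoted from \cite{Hasle2,KS}. Your outline is precisely the Koml\'{o}s--Szemer\'{e}di iterative argument from those references, so there is nothing in the present paper to compare against.

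On the technical side, your sketch is sound in shape but the bookkeeping in the final paragraph is indexed on the wrong variable. It is $n_i := |V(H_i)|$, not $|X_i|$, whose decay you actually control: whichever side you recurse on, $n_{i+1} \leq \max\{(1+\rho(|X_i|))|X_i|,\, n_i - |X_i|\}$, and since $k/2 \le |X_i| \leq \tfrac{2}{3}n_i$ both options force $n_{i+1} < n_i$. The sizes $|X_i|$ themselves need not shrink geometrically, and you do not get to ``arrange'' them. The convergence of $\sum_i \rho(|X_i|)$ is obtained instead by bounding each $\rho(|X_i|)$ in terms of $n_i$ and $n_{i+1}$ (using that $x\mapsto x\rho(x)$ is increasing while $\rho$ is decreasing) and then grouping the iterations into dyadic ranges of $n_i$; the specific choice $\rho(x) = \eps_1/\log^2(15x/k)$ makes the resulting series behave like $\eps_1 \sum_{m\ge 2} m^{-2}$, exactly as you anticipate. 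So your heuristic lands on the right estimate, but the rigorous version tracks $n_i$ rather than $|X_i|$.
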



Expanders are usually highly connected sparse graphs with certain expansion properties. A key property of the subliner expanders that we shall use is to connect vertex sets with a short path whilst avoiding a reasonable-sized set of vertices.
\begin{lemma}[Robust connectivity, \cite{KS}]\label{distance}
Let $\varepsilon, k>0$. If $G$ is an $n$-vertex $(\varepsilon,k)$-robust-expander, then for any two vertex sets $X_1, X_2$ each of size at least $x\geq k$, and a vertex set $W$ of size at most $\tfrac{\rho(x)x}{4}$, there exists a path in $G-W$ between $X_1$ and $X_2$ of length at most $\tfrac{2}{\varepsilon}\log^3\Big(\tfrac{15n}{k}\Big)$.
\end{lemma}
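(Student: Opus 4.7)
The plan is a standard BFS ball-growing argument in $G-W$. Letting $B_0 := X_1$, I would iteratively grow the ball by setting $B_{i+1} := B_i \cup (N_G(B_i) \setminus W)$, and simultaneously grow $B'_i$ from $X_2$. If $B_r \cap B'_r \neq \emptyset$ for some small $r$, then concatenating the two BFS branches yields an $X_1,X_2$-path in $G-W$ of length at most $2r$. So the whole task reduces to bounding the number of layers $r$ needed for the two balls to meet.

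The per-step estimate is the crux. While $k/2 \le |B_i| \le 2n/3$, the $(\eps,k)$-robust-expansion applied with empty edge-deletion set $F$ gives $|N_G(B_i)| \ge \rho(|B_i|)\,|B_i|$; at most $|W|$ of these neighbours lie in $W$, so $|B_{i+1}| \ge |B_i| + \rho(|B_i|)|B_i| - |W|$. Since $y \mapsto \rho(y)\cdot y$ is non-decreasing on $y \ge k/5$ (because $y/\log^2(15y/k)$ is increasing once $15y/k$ is bounded below), one has $\rho(|B_i|)|B_i| \ge \rho(x)x \ge 4|W|$, yielding the multiplicative lower bound
\[
|B_{i+1}| \ge |B_i|\cdot\big(1 + \tfrac{3}{4}\rho(|B_i|)\big).
\]

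To get the $(2/\eps)\log^3(15n/k)$ total length, I would analyse the growth \emph{dyadically}, breaking the BFS into phases in which $|B_i|$ doubles. A single doubling at scale $y$ requires at most $O(\log(2)/\rho(y)) = O(\eps^{-1}\log^2(15y/k))$ steps, using $\log(1 + \tfrac{3}{4}\rho(y)) \ge \tfrac{3}{8}\rho(y)$. Summing over all dyadic scales $y = |X_1|, 2|X_1|, \ldots, n/2$ telescopes to
\[
r \;\le\; \sum_{i=0}^{\log_2(n/|X_1|)} O\!\Big(\tfrac{1}{\eps}\log^2\big(15\cdot 2^i |X_1|/k\big)\Big) \;=\; O\!\Big(\tfrac{1}{\eps}\log^3(15n/k)\Big).
\]
Running the same argument from $X_2$ and noting that both $B_r$ and $B'_r$ sit inside $G-W$, which has fewer than $n$ vertices, forces $B_r\cap B'_r\neq\emptyset$ and yields the claimed path of length $\le 2r$.

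The main obstacle I anticipate is precisely this dyadic summation. A naive one-shot estimate using the uniform bound $\rho(y)\ge \rho(n)$ produces only $O(\eps^{-1}\log n\cdot \log^2(15n/k))$ steps, which is weaker than $(2/\eps)\log^3(15n/k)$ when $k$ is small. Exploiting that $\rho(y)$ is significantly larger at small scales, so that early doublings are ``cheap'', is what converts the sum of squared logarithms into a single cubed logarithm and pins down the right constant. Corner cases, such as $|X_1|$ or $|X_2|$ already exceeding $n/2$ or being adjacent, are easily handled separately in the base step.
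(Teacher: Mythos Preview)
The paper does not supply its own proof of this lemma; it is quoted from Koml\'{o}s--Szemer\'{e}di \cite{KS}. Your sketch---grow BFS balls from each $X_i$, absorb the loss of at most $|W|\le\tfrac{1}{4}\rho(x)x$ vertices per layer via monotonicity of $y\mapsto y\rho(y)$ on $y\ge x\ge k$, and then sum the cost of successive doublings using $\int \log^2 u\,du=\tfrac{1}{3}\log^3 u$---is exactly the standard argument and is correct, giving path length at most $\tfrac{16}{9\eps}\log^3(15n/k)<\tfrac{2}{\eps}\log^3(15n/k)$.

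Two cosmetic points. First, start from $X_i\setminus W$ rather than $X_i$ (still of size at least $x-|W|\ge\tfrac{3x}{4}\ge k/2$) so that the resulting path genuinely lies in $G-W$. Second, your closing diagnosis is inverted: since $\log n\le\log(15n/k)$ for all $k\ge 1$, the naive uniform estimate $O(\eps^{-1}\log n\cdot\log^2(15n/k))$ already lands inside $O(\eps^{-1}\log^3(15n/k))$; the dyadic refinement is needed only to secure the leading constant $2$, and the naive bound's shortfall is worst when $k$ is \emph{comparable to} $n$ (so that $\log(15n/k)=O(1)$ while $\log n$ is large), not when $k$ is small.
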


\subsection{Units and  Webs}



\label{web-unit}
\begin{definition}[unit]\label{defn:unit}
For $h_1,h_2,h_3\in \mathbb{N}$, a graph $F$ is an \emph{$(h_1,h_2,h_3)$-unit} if it contains distinct vertices $u$ (the \emph{core} vertex of $F$) and $x_1,\ldots,x_{h_1}$, and $F=\bigcup_{i\in[h_1]}(P_i\cup S_{i})$, where
\begin{itemize}
\item $\mathcal{P}=\bigcup_{i\in[h_1]}P_i$ is a collection of pairwise internally vertex-disjoint paths, each of length at most $h_3$, such that $P_i$ is a $u,x_i$-path, and
\item $\mathcal{S}=\bigcup_{i\in[h_1]}S_{i}$ is a collection of vertex-disjoint $h_2$-stars such that $S_{i}$ has center $x_i$ and $\bigcup_{i\in[h_1]}(V(S_{i})\setminus \{x_i\})$ is disjoint from $V(\mathcal{P})$.
\end{itemize}
\end{definition}
We call $S_{i}$ a \emph{pendent} star in the unit $F$ and every such path $P_i$ is a \emph{branch} of $F$. Define the \emph{exterior} $\mathsf{Ext}(F):=\bigcup_{i\in[h_1]}(V(S_{i})\setminus \{x_i\})$ and \emph{interior} $\mathsf{Int}(F):=V(F)\setminus \mathsf{Ext}(F)$.
\begin{definition}[web]\label{defn:web}
For $h_1,h_2,h_3,h_4,h_5\in \mathbb{N}$, a graph $W$ is an \emph{$(h_4,h_5,h_1,h_2,h_3)$-web} if it contains distinct vertices $v$ (the \emph{core} vertex of $W$), $u_1,\ldots,u_{h_4}$, and $W=\bigcup_{i\in[h_4]}(Q_i\cup F_{i})$, where
\begin{itemize}
\item $\mathcal{Q}=\bigcup_{i\in[h_4]}Q_i$ is a collection of pairwise internally vertex-disjoint paths such that each $Q_i$ is a $v,u_i$-path of length at most $h_5$.
\item $\mathcal{F}=\bigcup_{i\in[h_4]}F_{i}$ is a collection of vertex-disjoint $(h_1,h_2,h_3)$-units such that $F_{i}$ has core vertex $u_i$ and $\bigcup_{i\in[h_4]}(V(F_{i})\setminus \{u_i\})$ is vertex-disjoint from $V(\mathcal{Q})$.
\end{itemize}
\end{definition}

We call each $Q_i$ a \emph{branch} and call the branches inside each unit $F_i$ the \emph{second-level} branches of $W$. Similarly define the \emph{exterior} $\mathsf{Ext}(W):=\bigcup_{i\in[h_0]}\mathsf{Ext}(F_{u_i})$, and the \emph{interior} $\mathsf{Int}(W):=V(W)\setminus \mathsf{Ext}(W)$ and additionally define \emph{center} $\mathsf{Ctr}(W):=V(\mathcal{Q})$.
\begin{figure}[H]
 \begin{center}
   \includegraphics[scale=0.6]{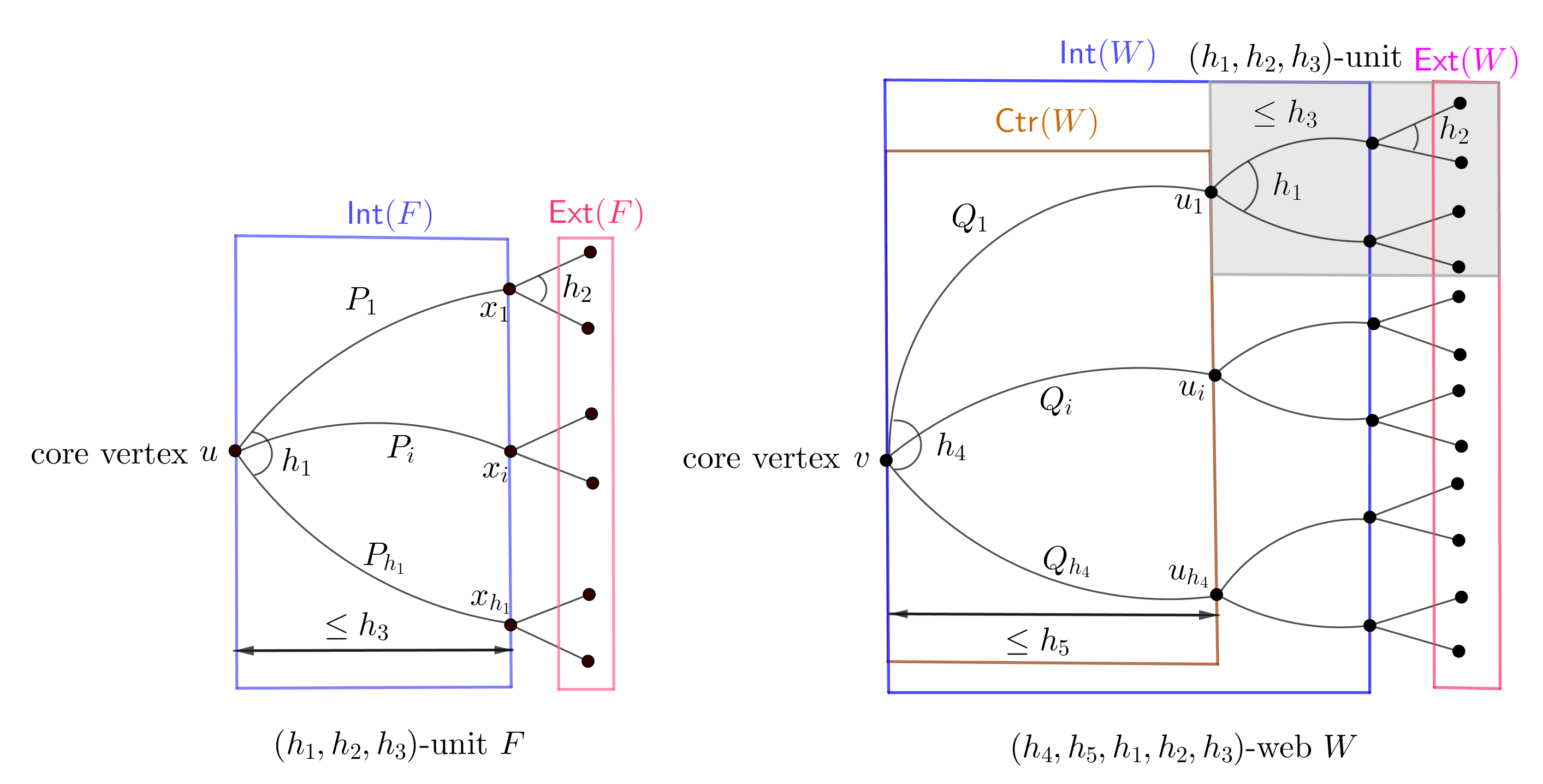}\\
   \caption{unit and web}
\label{webdraw}
 \end{center}
\end{figure}

\section{Main lemmas and overviews}\label{pf1}
In this section we give an outline of our proof, and compare with previous approaches.
To construct a large clique subdivision as in Theorem \ref{main1}, we first apply Lemma~\ref{expander} to obtain in $G$ a subgraph with expansion properties. Unlike in \cite{crux, LiuC4}, the construction we use differs by comparing the value of $C_{\alpha}(G)$ with the density of the subgraph. We first consider the case when $C_{\beta}(G)$ is larger than $d^2\log^{O(1)} n$ (see Lemma~\ref{dense1}) for some $\beta$ slightly larger than $\alpha$ given as in Theorem \ref{main1}. Here we often call it a \textit{non-extremal} case as it is far apart from the space barrier. 

\begin{lemma}[Non-extremal]\label{dense1}
Suppose $\tfrac{1}{n}, \tfrac{1}{d}\ll \tfrac{1}{T}, c\ll \beta,\eps \le\tfrac{1}{150}$ and $k\in \mathbb{N}$ has $k\le \min\{d^2,\tfrac{n}{T}\}$ and $d\ge \log^{1000}n$. Let $G$ be an $n$-vertex $(\eps, k)$-robust-expander with $\delta(G)\ge \tfrac{1}{2}d(G)\ge \tfrac{1}{16}\beta  d$ and $m=\log^4\tfrac{n}{k}$. If $\tfrac{C_{\beta}(G)}{\log C_{\beta}(G)}\ge d^2m^{100}$, then $G$ contains a $K_{cd}$-subdivision.
\end{lemma}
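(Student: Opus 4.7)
The proof will follow the standard Liu--Montgomery strategy of building a $K_{cd}$-subdivision in two phases. In phase one, I construct a family of $cd$ pairwise vertex-disjoint $(h_4,h_5,h_1,h_2,h_3)$-webs $W_1,\ldots,W_{cd}$ whose cores $v_1,\ldots,v_{cd}$ will play the role of the branch vertices; the parameters are set so that each web has a very small interior (only polynomial in $d$ and $\log n$) but exterior large enough to survive the subsequent path-connection phase. Concretely I would take $h_5, h_3 = O(\log^3(n/k))$, $h_4 h_1 \ge cd$ (ensuring there is one pendant star available for every future connection), and $h_2$ large enough that $\rho(h_2)h_2$ exceeds the total ``forbidden set'' used during the connection phase. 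In phase two, for each pair $i<j$, I pick a previously-unused pendant star $S^{(j)}_i \subset W_i$ and $S^{(i)}_j \subset W_j$, and concatenate the unique web-paths $v_i \to S^{(j)}_i$ and $v_j \to S^{(i)}_j$ with a short external path $P_{ij}$ connecting the leaf-sets of these two stars.

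The external paths $P_{ij}$ are produced by iterated applications of Lemma~\ref{distance}, with the star leaf-sets (each of size $h_2 \ge k$) playing the role of $X_1, X_2$, and with the forbidden set consisting of the interiors of all webs, the leaves already used in earlier $P_{i'j'}$, and the pendant stars currently in use. Summing the contributions gives a forbidden set of size at most $O(cd \cdot h_4(h_5 + h_1 h_3) + d^2 \log^3(n/k)) = O(d^2 m^{O(1)})$, so choosing $h_2$ slightly larger than $d^2 m^{O(1)}$ ensures $\rho(h_2)h_2/4$ dominates this throughout the iteration. The cores $v_1,\ldots,v_{cd}$ together with the combined paths $v_i \to S^{(j)}_i \to P_{ij} \to S^{(i)}_j \to v_j$ then form the desired $K_{cd}$-subdivision.

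The main obstacle is phase one: building the $cd$ webs so that every one of them simultaneously has the required exterior size, given the global vertex budget and the fact that building each web may consume a large chunk of the high-density core of $G$. This is precisely what the crux hypothesis $\tfrac{C_{\beta}(G)}{\log C_{\beta}(G)} \ge d^2 m^{100}$ is designed to handle: it forces every subgraph of order below $d^2 m^{100}$ to have density less than $\beta d$, so the ``dense mass'' of $G$ is spread across $\Omega(d^2 m^{100})$ vertices and no single web can monopolize it. I would exploit this via the Buci\'{c}--Montgomery random-partition technique: split $V(G)$ into $\Theta(cd)$ random blocks so that the crux lower bound, the minimum degree, and the robust-expansion property all transfer to each block with high probability, and then build one web inside each block in parallel using a randomized variant of Lemma~\ref{distance}. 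The polylogarithmic slack $m^{100}$ in the hypothesis is what absorbs the various $\log$ factors coming from robust expansion, branch lengths, and the concentration bounds needed for the random partition step.
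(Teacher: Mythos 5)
Your two-phase outline (build webs, then connect their exteriors via short paths) is the correct high-level strategy, and you are right that the crux hypothesis $\tfrac{C_{\beta}(G)}{\log C_{\beta}(G)} \ge d^2 m^{100}$ is what lets one keep finding density after deleting interiors, and that a Buci\'{c}--Montgomery-style random partition is the right tool. However, the concrete parameter choices and the partition scheme both fail.

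First, partitioning $V(G)$ into $\Theta(cd)$ random blocks destroys expansion. A vertex of degree $\approx d$ has, in expectation, only $d/(cd) = 1/c = O(1)$ neighbours inside its own block, so the minimum degree, the average degree, and hence the robust expansion cannot possibly ``transfer'' to the blocks as you claim. The paper's Lemma~\ref{webs} instead uses a \emph{two}-way split $V = V_1 \cup V_2$ (each part still has degree $\Theta(d)$), with $V_1$ reserved for growing balls via Lemma~\ref{ball} and $V_2$ reserved for building units and routing the connecting paths through Lemma~\ref{random_distance}.

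Second, your size accounting for the webs does not close. You want $h_2$ ``slightly larger than $d^2 m^{O(1)}$'' so that $\rho(h_2)h_2/4$ beats the forbidden set in the applications of Lemma~\ref{distance}. But $h_2$ is the number of leaves of a \emph{single} pendent star, bounded by the degree of its centre vertex, so $h_2 = \Omega(d^2)$ is generally unavailable (in a near-regular graph $h_2 = O(d)$). Even granting it, $cd$ \emph{pairwise vertex-disjoint} webs with $h_4 h_1 \ge cd$ pendent stars each of size $h_2 \gtrsim d^2 m^{O(1)}$ would occupy $\gtrsim c^2 d^4 m^{O(1)}$ vertices, whereas the hypothesis only guarantees $n \ge C_\beta(G) \ge d^2 m^{100}$, which is far smaller when $d \ge \log^{1000} n$. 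The paper avoids both problems by (i) asking only for webs with \emph{disjoint interiors} (each interior is $O(tm^{O(1)})$ small, so the total interior budget is $O(t^2 m^{O(1)}) < C_\beta(G)/3$, while the large exteriors are allowed to overlap across webs), and (ii) using as the sets $X_1, X_2$ in the connectivity lemma the \emph{union} of all pendent-star leaves in a whole web (size $\approx h_4 h_1 h_2 \approx t^2 m^x$), not a single star. With $h_2 = t/80 = O(d)$ this is achievable, and the crux bound makes the resulting exteriors large enough to dominate the forbidden sets. Until these two issues are repaired, the proposal does not give a proof of Lemma~\ref{dense1}.
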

The case when $C_{\beta}(G)$ is close to $d^2$ (within a polylogarithmic factor) is most involved and the bulk of the work is to handle this, as the extremal construction (space barrier) may appear. 
To achieve this, we choose a smallest $\beta$-crux $H\<G$ and attempt to find desired clique subdivisions within $H$ instead. We will again use Lemma~\ref{expander} to find an $(\eps, k)$-robust-expander $H^*\<H$ for a different value of $k$ and then utilize the following lemma. 
%
\begin{lemma}[Extremal]\label{dense2}
Suppose $\tfrac{1}{n}, \tfrac{1}{d}\ll c\ll \beta,\eps\le\tfrac{1}{150}$ and $p,k\in \mathbb{N}$ satisfy $k=\min\{\tfrac{d^2}{\log^pd}, \tfrac{n}{\log ^pn}\}$ for $p\geq 1000$. Let $G$ be an $(\eps, k)$-robust-expander with $v(G)\le n$ and $\delta(G)\ge \tfrac{1}{2}d(G)\ge \tfrac{1}{16}\beta d$. If $d\ge \sqrt{\tfrac{n}{\log^p n}}$ and $C_{\beta}(G)\ge k\log k$, then $G$ contains a $K_{ct}$-subdivision, where $t=\min\{d,\sqrt{\tfrac{C_{\beta}(G)}{\log C_{\beta}(G)}}\}$.
\end{lemma}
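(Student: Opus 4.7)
The plan is to construct $ct$ vertex-disjoint webs $W_1,\ldots,W_{ct}$ with cores $v_1,\ldots,v_{ct}$ inside $G$ and then to join each pair $(W_i,W_j)$ by a short, internally disjoint $v_iv_j$-path routed through their exteriors. The cores serve as the branch vertices of the target $K_{ct}$-subdivision, and the joining paths, extended at each end through a pendant star, a unit-branch and a web-branch, become the subdivided edges.

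The web construction is the easier half. With $t\ge \sqrt{k}/2$ guaranteed by the crux bound $C_\beta(G)\ge k\log k$, I would choose web parameters $h_1,\ldots,h_5$ so that each $\mathsf{Ext}(W_i)$ has size $\asymp t\cdot\mathrm{polylog}(n)$ while each $\mathsf{Int}(W_i)$ has only polylog size. Around any chosen high-degree core vertex, the branches and unit-branches of length $O(\log^3(n/k))$ are produced by iterated use of Lemma \ref{distance}, and the pendant stars come directly from $\delta(G)\ge \beta d/16$. The $ct$ webs can be built sequentially, because at each stage the previously built webs occupy only $O(t\cdot\mathrm{polylog}(n))=o(k)$ vertices, an obstruction easily absorbed into the forbidden set of Lemma \ref{distance}.

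The pairwise connection phase is the main obstacle. To match the space-barrier lower bound, the connecting paths must have length $O(\log n)$, rather than the $\mathrm{polylog}(n)$ directly produced by Lemma \ref{distance}, and naive sequential routing breaks down as soon as the $\binom{ct}{2}\cdot O(\log n)$ total blocked vertices --- which can be as large as $n/\mathrm{polylog}(n)$ in the extremal regime --- exceed the threshold $\rho(|X|)|X|/4$ of that lemma. Following the outline in Section \ref{pf1}, I would adapt the Buci\'{c}--Montgomery random-partition technique: randomly split $V(G)$ into $O(\log^{O(1)}n)$ parts so that, with high probability, each part inherits an $(\eps',k')$-robust-expansion property on its own scale and retains a $\beta$-crux of comparable order. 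The crux hypothesis $C_\beta(G)\ge k\log k$ is exactly what enables the second property, by preventing any small corner of $G$ from absorbing a disproportionate share of the density. The $\binom{ct}{2}$ pairs are then handled in batches, each batch routed inside a fresh part by a random variant of Lemma \ref{distance}; re-randomising sparse endpoint subsets lets a union bound control the internal-vertex collisions between simultaneously routed pairs, and yields paths of the desired length $\Theta(\log n)$.

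Finally, for each pair $(i,j)$ one extends the internal connecting path at both ends through an unused leaf of $\mathsf{Ext}(W_i)$ and $\mathsf{Ext}(W_j)$ (available because each exterior has size $\gg ct$), then through the star centre, the unit-branch, and the web-branch up to $v_i$, respectively $v_j$; the resulting $v_iv_j$-paths are pairwise internally disjoint by construction and deliver the required $K_{ct}$-subdivision. I expect the hard step to be the middle one: designing the random variant of Lemma \ref{distance} that supplies length-$\Theta(\log n)$ paths at density $\binom{ct}{2}$, and tuning the random partition so that every part is at once a robust-expander at the right scale and carries an essentially undiminished crux. Without the crux lower bound the random partition could dump all the dense structure into a single piece and leave the remaining parts too sparse to host the short connections, which is precisely the phenomenon encoded in the space-barrier extremal example.
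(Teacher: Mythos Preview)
Your sketch misidentifies which half is hard and, as a result, misses the central idea. A web whose core $v_i$ is to be joined to $ct-1$ other cores by pairwise internally disjoint $v_i$-paths must carry at least $ct-1$ branches, so $|\mathsf{Int}(W_i)|=\Omega(t)$ automatically; a ``polylog interior'' is incompatible with the very connection scheme you describe in your last paragraph. Summed over $\Omega(t)$ webs the interiors already occupy $\Omega(t^2)$ vertices, and once the branch lengths $\Theta(\log n)$ are included this is $\Omega(t^2\log n)$, which in the extremal regime is $\Omega(n)$. Hence the iterative web construction hits exactly the same space barrier you flag for the linking phase, and the forbidden set is far too large to absorb into Lemma~\ref{distance}. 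Neither half is ``the easier half''.

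The paper (through Lemma~\ref{webs}) resolves both phases with a single random $2$-partition $V(G)=V_1\cup V_2$, not a polylog partition into batches. The point is not to route different pairs in different parts, but to separate the unavoidable long segment of every branch from the routing layer. Each web branch is built as a consecutive shortest path inside a ball grown in $G[V_1]$ (Lemma~\ref{ball}), followed by a short hop through $V_2$; this gives $|\mathsf{Int}(W_i)\cap V_1|=O(t\ell)$ with $\ell=\Theta(\log n)$, while $|\mathsf{Int}(W_i)\cap V_2|=O(t\,m^{O(1)})$ with $m=\log^4(n/k)\le(\log\log n)^5$ here. All routing---attaching units to the ball during web construction and linking exteriors at the end---takes place through $V_2$ via the random connectivity Lemma~\ref{random_distance}, where the accumulated obstruction is only $t^2 m^{O(1)}=o(C_\beta(G))$. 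The large $V_1$-interiors, totalling up to $t^2\ell<C_\beta(G)/2$, are handled not by expansion but by the crux: Lemma~\ref{prop}\ref{robust} guarantees $G[V_1]$ minus all previous $V_1$-interiors still has average degree $\Omega(d)$, and Lemma~\ref{ball} then regrows a ball of size $C_\beta(G)/3$ inside it. So the crux hypothesis is not used to show that random parts inherit a crux; it is used to survive the deletion of the $\Theta(\log n)$ portions that your sketch never accounts for.
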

We shall use Lemma~\ref{dense2} to cover the (extremal) case when $d\ge \sqrt{\tfrac{v(H)}{\log^p v(H)}}$ and $C_{\alpha}(G)\ge k\log k$. To tackle the space barrier, we need to bound the value of $k$ in Lemma~\ref{dense2} so as to robustly find short subdivided paths of nearly optimal length $O(\log n)$ (see Section~\ref{sec_3.1} for more details). 

The remaining (non-extremal) case is now well understood, due to the work of Im, Kim, Kim and Liu \cite{crux}, and the following result suffices to give the required subdivision when $C_{\alpha}(G)<k\log k$, though the bound on $t$ is not optimal in general. %
\begin{lemma}[\cite{crux}]\label{lem_dense}
Suppose $\tfrac{1}{n} \ll \tfrac{1}{T} \ll \eps <\tfrac{1}{100}$ and $d \geq \exp(\log^{1/6} n)$.
If an $n$-vertex $(\eps, \eps d)$-robust-expander $G$ has average degree at least $\tfrac{d}{2}$ and minimum degree $\delta(G) \geq \tfrac{d}{4}$, then $G$ contains a $K_t$-subdivision where $t= \min\{\tfrac{d}{\log^{60}d}, \tfrac{\sqrt{n}}{(\log(Tn/d))^{11}}\}$.
\end{lemma}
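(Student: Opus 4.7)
The plan is to follow the standard units-and-connections framework for embedding clique subdivisions in sublinear expanders, as developed in \cite{KS,LiuC4} and refined in \cite{crux}. I would build a family $\mathcal{F}=\{F_1,\ldots,F_t\}$ of pairwise vertex-disjoint $(h_1,h_2,h_3)$-units and then iteratively connect their exteriors pairwise via short paths supplied by Lemma~\ref{distance}. The $\binom{t}{2}$ linking paths, together with the cores $u_1,\ldots,u_t$ of the units, form the desired $K_t$-subdivision: the cores become branch vertices, and each linking path, augmented by the relevant branches inside the two endpoint units, becomes one subdivided edge.

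For a single unit at a chosen core $u$, I would use $\delta(G)\ge d/4$ together with $(\eps,\eps d)$-robust expansion to perform BFS, finding $h_1\approx t$ internally disjoint paths from $u$ of length at most $h_3=O(\log^3(n/d))$ to distinct endpoints $x_1,\ldots,x_{h_1}$ of degree $\ge d/4$, after which disjoint $h_2$-stars can be attached at the $x_i$ within the residual neighborhoods. I would target $h_2\approx \eps d/t$, so that $|\mathsf{Ext}(F)|\approx \eps d=k$ matches the minimum target-set size required by Lemma~\ref{distance}.

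Building $t$ disjoint units greedily and deleting their vertices, the residual graph retains robust expansion: the deletion of a small vertex set can be absorbed into the edge-set $F$ permitted by the $e(F)\le d(G)\rho(|X|)|X|$ clause of Definition~\ref{def_sub}. The total vertex consumption for units is $t\cdot h_1(h_2+h_3)=O(t\eps d+t^2\log^3(n/d))$, and requiring both terms to be $o(n)$ is essentially where the two cases in $t=\min\{d/\log^{60}d,\sqrt n/(\log(Tn/d))^{11}\}$ come from. For each pair $(i,j)\in\binom{[t]}{2}$ I would then apply Lemma~\ref{distance} between the unused parts of $\mathsf{Ext}(F_i)$ and $\mathsf{Ext}(F_j)$, with avoidance set $U=\bigcup_\ell\mathsf{Int}(F_\ell)\cup(\text{linking paths built so far})\cup(\text{exterior vertices already consumed})$, producing a subdivided edge of length $O(\log^3(n/d))$.

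The main obstacle is the layered polylogarithmic accounting. The path-length bound $\log^3(n/k)$ enters both as a multiplicative factor summed over the $\binom{t}{2}$ pairs and inside the expansion factor $\rho(x)\approx\eps/\log^2(x/k)$ that governs the admissible size of the avoidance set, and the unit-construction step itself incurs additional logarithmic factors from iterated BFS and from maintaining expansion under successive deletions. Tracking these compound losses precisely is what produces the specific exponents $60$ and $11$ in the statement. The hypothesis $d\ge\exp(\log^{1/6}n)$ is exactly what makes $\log d$ and $\log(n/d)$ comparable up to constants with $\log n$, allowing the various polylog losses to consolidate into the clean two-term bound; and preserving robust expansion across all the deletions is where the strengthened robust-expander lemma of Haslegrave--Kim--Liu (Lemma~\ref{expander}) is essential.
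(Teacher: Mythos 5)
Note first that the paper does not prove Lemma~\ref{lem_dense} itself --- it is quoted from \cite{crux}; the closest in-paper analogues are Lemma~\ref{webs} and the appendix proof of Lemma~\ref{lem_sparse}, and both of these use the two-level \emph{web} structure of Definition~\ref{defn:web}, not the one-level units you propose.

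Your units-only plan has a genuine gap in the connection phase. With your parameters ($h_1\approx t$, $h_2\approx\eps d/t$) each exterior has size $|\mathsf{Ext}(F_i)|\approx\eps d$, so Lemma~\ref{distance} permits an avoidance set of size at most $\tfrac14\rho(\eps d)\,\eps d\approx\eps^2 d/\log^2(n/\eps d)$. But the set you must avoid once you reach the later pairs --- the previously built $\binom{t}{2}$ linking paths of length $O(\log^3(n/d))$, plus the unit interiors --- has order $t^2\log^3(n/d)$. For $t=d/\log^{60}d$ and $d\ge\exp(\log^{1/6}n)$ one has $t^2\gg d\cdot\mathrm{polylog}$, so this avoidance set overwhelms the budget by an enormous factor and the greedy linking collapses long before it finishes; no rebalancing of $(h_1,h_2)$ inside a single-level unit repairs this. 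What fixes it in \cite{crux}, and in the paper's own Lemma~\ref{webs} and Appendix~\ref{im3}, is precisely the extra level of the web: a core fans out to $\Theta(t)$ unit-cores, each of which fans out to $\mathrm{polylog}(n)$ pendant stars of $\Theta(t)$ leaves, giving $|\mathsf{Ext}(\mathcal{W})|=\Theta(t^2\,\mathrm{polylog}(n))$ while keeping each interior small. Since only the \emph{interiors} need to be pairwise disjoint, this is feasible, and then the $O(t^2\,\mathrm{polylog}(n))$ avoidance set is a small fraction of each target --- exactly the accounting carried out in Phase~2 of the proof of Lemma~\ref{webs}. You also omit the bounded-maximum-degree reduction (Lemma~\ref{maxd}), which has to precede any deletion argument: without $\Delta(G)=O(d^2\log^{10}n)$, a single deleted vertex may carry $\Theta(n)$ edges, and the ``absorb vertex deletions into the $F$ edge budget'' step you invoke (via the clause $e(F)\le d(G)\rho(|X|)|X|$ of Definition~\ref{def_sub}) gives nothing.
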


It will be of independent interests to see whether one can find a clique subdivision of order linear in the average degree whenever the host expander is slightly sparser than the extremal construction (i.e.  $d \leq n^{1/2-o(1)}$). This is partially confirmed as follows.

\begin{lemma}\label{lem_sparse}
Suppose $\tfrac{1}{d} \ll c \ll \eps < \tfrac{1}{100}$ and $d \leq \sqrt[3]{\tfrac{n}{\log^{100}n}}$. If an $n$-vertex $(\eps, \eps d)$-expander $G$ has average degree at least $\tfrac{d}{2}$ and minimum degree $\delta(G) \geq \tfrac{d}{4}$, then $G$ contains a $K_{cd}$-subdivision.
\end{lemma}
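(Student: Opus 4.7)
The plan is to construct a $K_{cd}$-subdivision directly by (i) designating $cd$ branch vertices $v_1,\ldots,v_{cd}$, (ii) building a compact unit around each one (in the sense of Definition \ref{defn:unit}) whose pendent stars serve as disjoint reservoirs of candidate endpoints, and (iii) stitching the units together in pairs via internally vertex-disjoint short paths obtained from Lemma \ref{distance}.

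First I would build the units iteratively. Given $v_1,\dots,v_{i-1}$ and already-built units $F_1,\dots,F_{i-1}$, pick $v_i$ outside $V(F_1)\cup\cdots\cup V(F_{i-1})$, which is available because the sparsity assumption $d\leq \sqrt[3]{n/\log^{100} n}$ ensures the previously built units occupy only a tiny fraction of $V(G)$. Around $v_i$ take $h_1 := cd-1$ of its neighbours as second-level vertices $x_{i,1},\dots,x_{i,h_1}$ (available since $\delta(G)\geq d/4\geq h_1$), and around each $x_{i,j}$ grow a pendent star $S_{i,j}$ of $h_2$ leaves using the $(\eps,\eps d)$-expander property together with $\delta(G)\geq d/4$, while avoiding the existing $F_{i'}$'s. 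The parameter $h_2$ should be chosen both at least $\eps d$ (so Lemma \ref{distance} applies) and large enough to absorb the avoid set accumulated in the second phase.

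Next, for each unordered pair $\{i,j\}\in \binom{[cd]}{2}$, processed in an arbitrary order, I would apply Lemma \ref{distance} with $X_1,X_2$ being the leaf sets of the earmarked pendent stars $S_{i,j}$ and $S_{j,i}$, and with avoid set $W$ equal to the union of all unit interiors, all already-committed leaves of other pendent stars, and the internal vertices of every connecting path built for earlier pairs. The lemma produces a $G$-path of length at most $(2/\eps)\log^3(15n/(\eps d))=O(\log^3 n)$ which, concatenated with the two short branches inside $F_i,F_j$ and the two star edges at their ends, completes an internally vertex-disjoint $v_i$--$v_j$ path. Assembling all $\binom{cd}{2}$ such paths together with the $cd$ cores $v_i$ gives the required $K_{cd}$-subdivision.

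The main obstacle is the bookkeeping around the accumulated avoid set $W$: as pairs are processed, $|W|$ grows by $O(\log^3 n)$ per pair plus the upfront cost of all unit interiors, and the hypothesis of Lemma \ref{distance} demands $|W|\leq \tfrac{1}{4}\rho(h_2)h_2 \asymp \eps h_2/\log^2(h_2/(\eps d))$ throughout. Making this work requires $h_2$ to be a suitable polynomial in $d$ and $\log n$ that dominates the eventual $|W|$ (of order $d^2\log^{O(1)}n$), while simultaneously keeping the total unit vertex cost $O(cd\cdot h_1\cdot h_2)$ comfortably below $n$. The cubic slack $n\geq d^3\log^{100}n$ is what provides room to balance these inequalities; if needed, the balance can be further relaxed by replacing units with the more flexible two-level web structure of Definition \ref{defn:web}. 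Carefully verifying that a consistent choice of $h_2$ exists, and that the pendent stars can indeed be grown robustly inside the ever-shrinking complement of the earlier construction, is the technical heart of the proof.
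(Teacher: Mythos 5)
Your plan founders on a quantitative obstruction that the paper's web machinery is specifically designed to circumvent. A pendent star in a $(h_1,h_2,h_3)$-unit is a literal $h_2$-star whose center is a single vertex $x_{i,j}$, so it can have at most $\deg(x_{i,j})$ leaves; since only $\delta(G)\ge d/4$ is guaranteed (the host graph could be near-regular), you must have $h_2=O(d)$. On the other hand, the accumulated avoid set $W$ in your stitching phase has size at least $\Omega\bigl(\binom{cd}{2}\log^3 n\bigr)=\Omega(d^2\log^3 n)$ (the internal vertices of the connecting paths alone), and Lemma~\ref{distance} requires $|W|\le\tfrac14\rho(h_2)h_2=\Theta\bigl(\eps h_2/\log^2(15h_2/(\eps d))\bigr)$. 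These two bounds force $h_2=\Omega(d^2\log^5 n)$, which is incompatible with $h_2=O(d)$ once $d$ exceeds a polylog in $n$. Your parenthetical remark about ``replacing units with the two-level web structure if needed'' is not an optional refinement: it is the only way to obtain exterior reservoirs of size $\Theta(d^2\log^{\Omega(1)}n)$ from a graph whose vertex degrees are only $\Theta(d)$. The paper's proof of Lemma~\ref{lem_sparse} builds $(4cd,4m,m^8,d/40,m+2)$-webs whose exteriors aggregate $\sim 4cd\cdot m^8\cdot(d/40)\sim d^2m^8$ leaves across many units and many pendent stars, and it is this aggregation that lets the connection step survive the $d^2\log^{O(1)}n$-sized avoid set.

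Beyond the central structural gap, several supporting ingredients are missing. (i) Growing stars ``avoiding the existing $F_{i'}$'s'' is not automatic once you have deleted a large vertex set from an expander; the paper invokes the robust sampling Lemma~\ref{stargre} to recover an expander of comparable density inside $V(G)\setminus(\text{built so far})$. (ii) To control the sizes, the paper first passes to a subgraph with $\Delta\le 3d^2\log^{10}n$ via Lemma~\ref{maxd}, a step your sketch never mentions. (iii) Building the first-level branches of a web requires the ball-growing argument (Lemma~\ref{ball_1}), not a straightforward neighbourhood choice. (iv) Earmarking exactly one pendent star $S_{i,j}$ per target $j$ is brittle: an earlier connecting path may accidentally overrun that star, leaving no leaves to apply Lemma~\ref{distance} from; the paper handles this with the ``overused/bad'' bookkeeping and by keeping many spare units in each web so that it survives pruning. (v) You also need to treat $d<\log^{50}n$ separately (the paper falls back to Lemma~\ref{ebound}), since the web construction needs $d\gtrsim m^8$ for each unit to support $m^8$ branches. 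In short, your outline reaches for the right tools but the unit-only version cannot close, and the ``technical heart'' lies in the web construction rather than in the avoid-set bookkeeping you identify.
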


Lemma~\ref{lem_sparse} actually strengthens a result in \cite{crux} where $d \leq \exp(\log^{1/6} n)$ (see Lemma \ref{ebound}) is proved, and its proof strategy essentially follows from that of Lemma~\ref{dense1}. We will add its proof in Appendix \ref{im3} for completeness.




With the four lemmas, we are now ready to prove Theorem \ref{main1}.
\begin{proof}[Proof of Theorem \ref{main1}]
We choose $\beta=\sqrt{4\alpha}\le \tfrac{1}{150}$ and $\tfrac{1}{d}\ll c\ll \tfrac{1}{T} \ll \beta,\eps$, and let $G$ be a graph with $d(G)=d$. Let
\[t=\min\Big\{d,\sqrt{\tfrac{C_{\alpha}(G)}{\log C_{\alpha}(G)}}
\Big\}.\] By Lemma~\ref{expander}, we have an $(\eps,\eps d)$-robust-expander $G_0\<G$ with $d(G_0)\ge \tfrac{d}{2},\delta(G_0)\geq \tfrac{d}{4}$ and let \[n=|V(G_0)|, ~m=\log^4\tfrac{n}{\eps d}.\]
Throughout the proof we assume that $\eps d<\tfrac{n}{T}$, otherwise by a result of Fox and Sudakov \cite{depran}, we can get a $K_{c\sqrt{n}}$-subdivision, as desired.
If $d \leq  \sqrt[3]{\tfrac{n}{\log^{100}n}}$ or $d \geq  \sqrt[3]{\tfrac{n}{\log^{100}n}}\geq \log^{1000}n$ and $\tfrac{C_{\beta}(G_0)}{\log C_{\beta}(G_0)}\ge d^2m^{100}$, then using Lemma~\ref{lem_sparse}, or Lemma~\ref{dense1} with $k=\eps d$, respectively, we obtain that $G_0$ (also $G$) contains a $K_{cd}$-subdivision as desired. It remains to consider the case $C_{\beta}(G_0)<d^2m^{100}\log C_{\beta}(G_0)<d^2\log^{500} n$ and $d> \sqrt[3]{\tfrac{n}{\log^{100}n}}$, which implies
\begin{align}\label{eq1}
d>\max\Big\{ \sqrt[3]{\tfrac{n}{\log^{100}n}}, \sqrt{\tfrac{C_{\beta}(G_0)}{\log^{500} n}}\Big\}.
\end{align}

Let $H\<G_0$ be a minimum $\beta$-crux of $G_0$. Then $v(H)=C_{\beta}(G_0)$ and $d(H)\ge \beta d(G_0)\ge \tfrac{1}{2}\beta d$. Taking $k=\min\{\tfrac{d^2}{\log^{2500}d}, \tfrac{v(H)}{\log^{2500}v(H)}\}$, we apply Lemma~\ref{expander} to obtain an $(\eps, k)$-robust-expander $H^*\<H$ with $d(H^*)\ge \tfrac{1}{4}\beta d$ and $\delta(H^*)\ge \tfrac{1}{2}d(H^*)\ge \tfrac{1}{8}\beta d$. By \eqref{eq1} it follows that
\[\log v(H)\ge \log d(H^*)\overset{\eqref{eq1}}{>} \tfrac{1}{4}\log n~~\text{and thus}~~d\overset{\eqref{eq1}}{>} \sqrt{\tfrac{v(H)}{\log^{500} n}}\ge\sqrt{\tfrac{v(H)}{\log^{2500} v(H)}}. \]
If $C_{\beta}(H^*)\ge k\log k$, then applying Lemma~\ref{dense2} on $H^*$ with $n=v(H)$ and $p=2500$, we obtain a $K_{ct_{\ref{dense2}}}$-subdivision in $H^*$ (also in $G$) with \[t_{\ref{dense2}}=\min\Big\{d,\sqrt{\tfrac{C_{\beta}(H^*)}{\log C_{\beta}(H^*)}}\Big\}\ge \min\Big\{d,\sqrt{\tfrac{C_{\alpha}(G)}{\log C_{\alpha}(G)}}\Big\}=t,\] where the inequality follows since $\beta=\sqrt{4\alpha}$ and thus
\[C_{\beta}(H^*)=\min\{|H'|: H'\<H^*, d(H')\ge \beta d(H^*)\ge \tfrac{1}{4}\beta^2 d= \alpha d\}\ge C_{\alpha}(G).
\]
Otherwise, we have $C_{\beta}(H^*)< k\log k\le \min\{\tfrac{d^2}{\log^{1500}d}, \tfrac{v(H)}{\log^{1500}v(H)}\}\le \min\{\tfrac{d^2}{\log^{1500}d}, \tfrac{n}{\log^{1500}n}\}$. This together with the fact $ C_{\beta}(H^*)\ge C_{\alpha}(G)$ implies that
\[t=\min\Big\{d,\sqrt{\tfrac{C_{\alpha}(G)}{\log C_{\alpha}(G)}}
\Big\}=\sqrt{\tfrac{C_{\alpha}(G)}{\log C_{\alpha}(G)}}\le \sqrt{k}\le \min\Big\{\tfrac{d}{\log^{60}d}, \tfrac{\sqrt{n}}{\log^{11}n}\Big\}\le t_{\ref{lem_dense}}.\]
Now we recap the assumption $d> \sqrt[3]{\tfrac{n}{\log^{100}n}}>\exp(\log^{1/6}n)$  and make use of Lemma~\ref{lem_dense} on $G_0$ to obtain a $K_{ct}$-subdivision as desired.
\end{proof}

\subsection{Proof Sketch of Lemmas}\label{sec_3.1}
We will give a unified approach for Lemma~\ref{dense1} and \ref{dense2} via Lemma~\ref{webs}, where the majority of the technical constructions are stated in a combined way.
\begin{lemma}\label{webs}
Suppose $\tfrac{1}{n}, \tfrac{1}{d}\ll \tfrac{1}{T}, c\ll\beta,\eps \le \tfrac{1}{150}$ and $x,k\in \mathbb{N}$ with $x>13, k\leq\tfrac{n}{T}, \ d\geq\log^{25x} n$. Let $G$ be an $n$-vertex $(\eps, k)$-robust-expander with $d(G)\ge d$, $\delta(G)\geq \tfrac{d}{2}$ and given $t$ satisfying \[ t=\min\left\{d, \sqrt{\tfrac{C_{\beta}(G)}{\log C_{\beta}(G)}}\right\}, \ k\le t^2m,\ t^2m^{4x} \leq \tfrac{1}{3}C_{\beta}(G),~~\text{where}~~ m= \log^4 \tfrac{n}{k}.\] Then $G$ contains a $K_{ct}$-subdivision.
\end{lemma}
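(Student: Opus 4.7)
The plan is to construct $t$ vertex-disjoint webs $W_1,\ldots,W_t\subseteq G$ (in the sense of Definition~\ref{defn:web}) whose core vertices $v_1,\ldots,v_t$ will play the role of branch vertices of a $K_{ct}$-subdivision, and then, for every pair $\{i,j\}\in\binom{[t]}{2}$, to find an internally disjoint path $P_{ij}$ from some leaf of $\mathsf{Ext}(W_i)$ to some leaf of $\mathsf{Ext}(W_j)$ that avoids $\bigcup_{\ell\neq i,j}\mathsf{Int}(W_\ell)$ together with all previously chosen $P_{i'j'}$. Concatenating $P_{ij}$ with the unique paths inside $W_i$ and $W_j$ joining its endpoints to $v_i$ and $v_j$ yields the subdivided edge between $v_i$ and $v_j$, and hence the desired $K_{ct}$-subdivision.

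For the web construction I would follow the now-standard sublinear-expander recipe: iteratively pick an unused vertex of high degree, use expansion together with Lemma~\ref{distance} to carve out a unit whose $h_1$ short branches each end at the centre of a pendant $h_2$-star, and then glue $h_4$ such units to a common web core by $h_4$ internally disjoint short paths. Choosing $h_1=h_4$ of order $t$, $h_2$ of order $m^{\Theta(x)}$, and $h_3=h_5$ of order $m$, each web will have at most $t^2 m^{O(x)}$ vertices. The hypothesis $t^2 m^{4x}\le \tfrac13 C_\beta(G)$, combined with the defining property of the crux (every subgraph on fewer than $C_\beta(G)$ vertices has average degree strictly below $\beta d$), then guarantees that after removing the first $\ell$ built webs the residual graph still has average degree at least $\tfrac{d}{4}$ and retains the expansion hypothesis, so the iteration survives all $t$ rounds. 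The constraints $k\le t^2 m$ and $d\ge \log^{25x}n$ are exactly what make $\rho(\cdot)$ nontrivial on the scales relevant to each ball-growing and Lemma~\ref{distance} invocation.

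The main obstacle is the connection stage in the extremal regime where $t^2$ is within a polylogarithmic factor of $C_\beta(G)$ and each $P_{ij}$ must have length $\Theta(\log n)$. A naive iteration of Lemma~\ref{distance} fails: after $\Theta(t^2)$ paths the cumulative forbidden set has size $\Omega(t^2\log n)$, which can exceed the admissible budget $\rho(h_2)\cdot h_2/4$ for the next application. To circumvent this, I would combine a random-partition technique in the spirit of Buci\'c--Montgomery~\cite{Mon-cycle}, splitting $V(G)$ into constantly many sub-expanders so that the web interiors and the connection paths live on essentially vertex-disjoint layers, with a randomised variant of Lemma~\ref{distance} that exploits the crux hypothesis: since no vertex set of size $o(C_\beta(G))$ achieves density as high as $\beta d$, uniformly random endpoints in $\mathsf{Ext}(W_i)$ and $\mathsf{Ext}(W_j)$ can be joined by a short path whose expected intersection with any given forbidden set $U$ of controlled size is only a small fraction of $|U|$. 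A union bound over the $\binom{t}{2}$ pairs then yields, with positive probability, a legal family $\{P_{ij}\}$ of disjoint short paths, completing the $K_{ct}$-subdivision.

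The delicate final step will be the parameter balance: the slack parameter $x$ must be large enough that each web is robustly reconstructible from the residual expander and small enough that the total web footprint $t\cdot h_1\cdot h_2 \le t^2 m^{O(x)}$ remains $o(C_\beta(G))$; the hypothesis $x>13$ together with $d\ge \log^{25x}n$ is precisely tuned so that both sides of this balance can be satisfied simultaneously for every value of $k\le t^2 m$ in the allowed range.
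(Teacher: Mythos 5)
Your high-level outline has the right shape --- build $\Theta(t)$ webs, link their exteriors by disjoint short paths, and invoke a Buci\'c--Montgomery-style random partition to get around the budget problem in Lemma~\ref{distance} --- but it is missing the two mechanisms that actually make the argument go through, and the one it proposes for the connection step is not the one that works.

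First, the web-building step. You describe the ``standard sublinear-expander recipe'': pick a high-degree vertex, carve a unit with Lemma~\ref{distance}, glue $\Theta(t)$ such units to a common core by $\Theta(t)$ internally disjoint short paths. This is exactly the step that breaks in the extremal regime. Each branch $Q_i$ of a web has length $\ell=\Theta(\log n)$, so after $\Theta(t)$ of them the forbidden set already has size $\Theta(t\log n)$, which can exceed the $\rho(x)\cdot x/4$ budget of Lemma~\ref{distance} well before you finish a single web, let alone $t$ of them. The paper resolves this with Lemma~\ref{ball}, which grows a ball of size $\ge C_\beta(G)/3$ from the web core that remains large even after all previously built branches are deleted. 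Crucially, this ball growth does not come from sublinear expansion at all; it comes from Lemma~\ref{prop}\ref{kexpan}, the fact that a small crux forces \emph{linear} vertex expansion on sets of size below $C_\alpha(G)/(K{+}1)$. Your proposal uses the crux only via the density consequence (removing $o(C_\beta(G))$ vertices keeps the average degree up), not via this much stronger vertex-expansion consequence, and without it the unit-gluing step has no valid source of paths.

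Second, the connection step. You propose a randomised variant of Lemma~\ref{distance} based on ``uniformly random endpoints in $\mathsf{Ext}(W_i),\mathsf{Ext}(W_j)$ whose path has small expected intersection with a forbidden set.'' There is no such estimate available: once two endpoints are fixed, the short connecting path in a sublinear expander is not random, and nothing prevents all such paths from routing through the same small bottleneck. What the paper actually does (Lemma~\ref{random_distance}) is fix a random bipartition $V(G)=V_1\cup V_2$ once and for all, grow the consecutive-shortest-path portion of each web's centre inside $V_1$, and route all connections \emph{through $V_2$}. Then the $\Theta(t^2\log n)$ internal vertices of the webs' branches live in $V_1$ and are avoided for free, while the remaining forbidden set in $V_2$ has size $t^2m^{O(x)}\ll C_\beta(G)$ and falls inside the budget of the robust connecting lemma. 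This ``automatic avoidance by layer'' is the idea you need, and it is not what a union bound over random endpoint choices gives you. Finally, the paper does not take a union bound over $\binom{t}{2}$ pairs; it uses a maximality argument on a family $\mathcal{P}$ of pairwise disjoint connectors and shows that if two webs remained unlinked, Lemma~\ref{random_distance} would produce one more connector, contradicting maximality. As written, your proof would not survive the extremal case $t^2 m^{4x}=\Theta(C_\beta(G))$ that Lemma~\ref{webs} is specifically built to handle.
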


As mentioned in the introduction, the main tasks are to
\stepcounter{propcounter}
\begin{enumerate}[label = ({\bfseries \Alph{propcounter}\arabic{enumi}})]
   \item\label{build} build $\Omega(t)$ webs that have disjoint interiors, and then
   \item\label{connect} connect theirs exteriors (each of size essentially $t^2m^{O(1)}$ via pairwise disjoint short paths whilst avoiding the vicinity of core vertices.
\end{enumerate}
For \ref{build},
previous approaches for building units or webs usually proceed by linking many disjoint stars and then use averaging arguments, which can only produce web structures with sublinear in $d(G)$ many branches (e.g. in \cite{crux}, while in \cite{LiuC4}, they can obtain a linear bound under the $C_4$-free condition). This much weaker bound only leads them to build a clique subdivision of order sublinear in $d(G)$ (this is obviously not tight when $t=d(G)$). To overcome this, the first insight in this paper is a new perspective we bring to the $\alpha$-crux number. \medskip

\noindent\textbf{$(D,\mu)$-dense.} We use a notion of $(D,\mu)$-dense as follows.

\begin{definition}\cite{GeneralH}
A graph $G$ is called \emph{$(D,\mu)$-dense} if
for every $W\subseteq V(G)$ with $|W|< D$, we have $d(G-W)\geq \mu d(G)$.
\end{definition}
\noindent Technically, we employ the following lemma, which allows us to iteratively build a web which avoids the vicinity of previous core vertices of webs built so far.

\begin{lemma}\label{prop}
Let $G$ be a graph with $\delta(G)\geq\tfrac{1}{2}d(G)$. Then
\stepcounter{propcounter}
\begin{enumerate}[label = ({\bfseries \Alph{propcounter}\arabic{enumi}})]
   \rm\item\label{robust}  \textit{for any $0<\alpha\le\tfrac{1}{5}$, $G$ is $(\tfrac{C_{\alpha}(G)}{2},\tfrac{1-3\alpha}{4})$-dense.}

    \rm\item\label{kexpan} \textit{for any $K,\alpha>0$ with $\alpha \le \tfrac{1}{2K+2}$, every $X\subseteq V(G)$ with $|X|\leq \tfrac{C_\alpha (G)}{K+1}$ satisfies $N_G(X)\geq K|X|$.}
\end{enumerate}
\end{lemma}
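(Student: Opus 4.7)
Both parts can be proved by contradiction, using that every subgraph of $G$ on fewer than $c:=C_\alpha(G)$ vertices has average degree strictly less than $\alpha d$, and that $\delta(G)\ge d/2$.

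For \ref{kexpan}, suppose $|X|\le c/(K+1)$ and $|N_G(X)|<K|X|$; let $H=G[X\cup N_G(X)]$. Then $|V(H)|<(K+1)|X|\le c$, so $2e(H)<\alpha d(K+1)|X|$. On the other hand, every $x\in X$ has all its neighbours inside $V(H)$, so $\deg_H(x)=\deg_G(x)\ge d/2$; double-counting yields $2e(H)\ge \sum_{x\in X}\deg_H(x)\ge |X|d/2$. Combining, $\alpha>\tfrac{1}{2(K+1)}$, contradicting $\alpha\le\tfrac{1}{2K+2}$.

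For \ref{robust}, suppose some $W\subseteq V(G)$ with $|W|<c/2$ has $d(G-W)<\mu d$, where $\mu=(1-3\alpha)/4$. Writing $n=|V(G)|$, $e_1=e(G[W])$, $e_2=e_G(W,V(G)\setminus W)$, the density assumption rearranges to
\[
e_1+e_2 \;>\; \tfrac{d}{8}\bigl[(3+3\alpha)n+(1-3\alpha)|W|\bigr].
\]
I aim to contradict this via an upper bound on $e_1+e_2$. Since $|W|<c$, $G[W]$ is not an $\alpha$-crux, so $e_1<\tfrac12\alpha d|W|$. For $e_2$ I split on $|N_G(W)|$. If $|N_G(W)|<c-1-|W|$, then $|W\cup N_G(W)|<c-1$, so this set fails to be an $\alpha$-crux and $e_1+e_2\le e(G[W\cup N_G(W)])<\tfrac12\alpha dn$, which is below the lower bound for $\alpha\le 1/5$. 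Otherwise, pick $N^*\subseteq N_G(W)$ of size exactly $c-1-|W|$ maximising $e_G(W,N^*)$; then $|W\cup N^*|=c-1$, so the minimality of $c$ gives $e_1+e_G(W,N^*)\le e(G[W\cup N^*])<\tfrac12\alpha d(c-1)$. A top-$k$ averaging argument supplies $e_G(W,N^*)\ge \tfrac{c-1-|W|}{|N_G(W)|}\,e_2$, and since $|W|\le (c-1)/2$ gives $(c-1)/(c-1-|W|)\le 2$, this yields $e_2<\alpha d|N_G(W)|\le \alpha d(n-|W|)$. Adding, $e_1+e_2<\alpha dn-\tfrac12\alpha d|W|$, which falls strictly below $\tfrac{d}{8}[(3+3\alpha)n+(1-3\alpha)|W|]$ for all $\alpha\le 1/5$ (in fact for all $\alpha\le 3/5$), the desired contradiction.

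The key step, and the one I expect to require the most care, is the second case of \ref{robust}: a naive attempt would analyse $G[W\cup N_G(W)]$ directly, but $|N_G(W)|$ might already exceed $c$, so one cannot invoke the crux bound on this subgraph. The fix is to work with a carefully truncated sub-neighborhood $N^*$ of controlled size and apply a top-$k$ averaging estimate to transfer density bounds from $G[W\cup N^*]$ back to $e_2$; both $(c-1)/(c-1-|W|)$ and the relation between $|W|$ and $c$ in this estimate are individually tight at $|W|=(c-1)/2$, so the constants must be balanced to close the final arithmetic.
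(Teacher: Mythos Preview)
Your proof of \ref{kexpan} is essentially identical to the paper's.

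For \ref{robust}, your argument is correct but proceeds quite differently from the paper's. The paper does not argue by contradiction: for fixed $W$ of size $k-1=\lceil c/2\rceil-1$, it takes an \emph{arbitrary} $Y\subseteq V(G)\setminus W$ of the same size $k-1$, sets $Z=V(G)\setminus(W\cup Y)$, and uses the crux bound on $G[W\cup Y]$ and on $G[Y]$ together with $\delta(G)\ge d/2$ to show $e(Y,Z)\ge (k-1)\tfrac{1-3\alpha}{2}d$. Averaging this inequality over all choices of $Y$ via a double count gives $d(G-W)\ge\tfrac{n-k+2}{n}\cdot\tfrac{1-3\alpha}{2}d\ge\tfrac{1-3\alpha}{4}d$. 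Your route instead upper-bounds directly the number of edges incident to $W$: you apply the crux bound to $G[W\cup N^*]$ for a densest truncation $N^*\subseteq N_G(W)$ of size $c-1-|W|$, and transfer this to $e_2$ by a top-$k$ averaging step. A pleasant byproduct is that your proof of \ref{robust} never actually invokes the hypothesis $\delta(G)\ge\tfrac12 d(G)$; it uses only $d(G)=d$ and the definition of $C_\alpha(G)$, so in fact it establishes \ref{robust} for arbitrary graphs. On the other hand, the paper's double-counting route yields the slightly sharper intermediate bound $d(G-W)\ge\tfrac{n-k+2}{n}\cdot\tfrac{1-3\alpha}{2}d$, which improves on $\tfrac{1-3\alpha}{4}d$ when $n\gg k$, though this refinement is not used elsewhere.
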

\noindent For now, an immediate consequence of \ref{robust} is that it
allows us to iteratively find large stars with up to $\tfrac{C_{\alpha}(G)}{2}$ vertices in total (even additionally avoiding the interiors of previous webs). But this is still far from clear whether we can build a web as promised in \ref{build}. Here we bring another new perspective to the $\alpha$-crux number. Instead of averaging arguments, a crucial ingredient in our construction is that we can robustly grow a ball around a fixed vertex (up to the size roughly $\tfrac{C_{\alpha}(G)}{3}$, see Lemma~\ref{ball}) so as to further connect up to many disjoint stars by disjoint paths. This indicates why our main proof is divided into three cases by distinguishing the value of the $\alpha$-crux number with the density $d(G)$. Also, to make this happen, we need the following notion formulated in \cite{LiuC4} to iteratively connect from a fixed vertex via disjoint paths.

\begin{definition}[\cite{LiuC4}]
Given a graph $G$ and $W\subseteq V(G)$, we say that paths $P_1,\ldots,P_t$, each starting with a vertex $v$ and contained in the vertex set $W$, are \emph{consecutive shortest paths} from $v$ in $W$ if for each $i$ $(1\leq i\leq t)$, the path $P_i$ is a shortest path between its endpoints in the set $W\setminus(\bigcup_{j<i}V(P_j))\cup\{v\}$.
\end{definition}

\noindent The following lemma allows us to robustly grow a ball.
\begin{lemma}\label{ball}
Suppose $\tfrac{1}{d}\ll c,\beta\le\tfrac{1}{5}$ and let $G$ be a graph with $\delta(G)\geq \tfrac{d(G)}{2}\geq 5cd$. Then for any integer $D\le \tfrac{1}{3}C_{\beta}(G)$, there exists a positive integer $\ell\le\log_2 (\tfrac{2D}{\delta(G)})$ such that given any vertex $v$ and consecutive shortest paths $P_1,\dots, P_t$, $t\leq cd$, from $v$ in $B_{G}^{\ell}(v)$, writing $U=\bigcup_{i\in t}V(P_i)$, we have $|B_{G-(U\backslash\{v\})}^{\ell}(v)|\geq D$.
\end{lemma}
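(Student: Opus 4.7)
The plan is a BFS-based ball-growing argument in $G-U'$, where $U':=U\setminus\{v\}$. Let $r_i:=|B^i_{G-U'}(v)|$, and fix $\ell$ to be the smallest positive integer at most $\log_2(2D/\delta(G))$ for which the recurrence below yields $r_\ell\ge D$. The engine is Lemma~\ref{prop}\ref{kexpan} applied with $K=3/2$ and $\alpha=\beta$: since $\beta\le 1/5=1/(2K+2)$, any $X\subseteq V(G)$ with $|X|\le C_\beta(G)/(K+1)=2C_\beta(G)/5$ satisfies $|N_G(X)|\ge (3/2)|X|$.

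For the base case, since the paths $P_1,\ldots,P_t$ are pairwise internally vertex-disjoint (they share only $v$) and each contributes exactly one neighbor of $v$ to $U'$, the bound $t\le cd \le \delta(G)/5$ (using $\delta(G)\ge 5cd$) gives
\[
r_1 \;=\; 1 + \deg_{G-U'}(v) \;\ge\; \delta(G)-t+1 \;\ge\; \tfrac{4}{5}\delta(G)+1.
\]
Moreover, since each $P_i$ lies in $B^\ell_G(v)$ and has length at most $\ell$, we get $|U'|\le t\ell\le cd\ell$. For the inductive step, whenever $r_i<D\le C_\beta(G)/3\le 2C_\beta(G)/5$, Lemma~\ref{prop}\ref{kexpan} yields $|N_G(B^i_{G-U'}(v))|\ge (3/2) r_i$, and hence
\[
r_{i+1} \;\ge\; r_i + \bigl(|N_G(B^i_{G-U'}(v))|-|U'|\bigr) \;\ge\; \tfrac{5}{2}\,r_i - cd\ell.
\]
Unrolling this recurrence gives $r_\ell\ge (5/2)^{\ell-1}\bigl(r_1-(2/3)cd\ell\bigr)+(2/3)cd\ell$. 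The hierarchy $1/d\ll c$ ensures $cd\ell\ll r_1$, and the ratio $(5/4)^{\ell-1}$ between the actual expansion factor $5/2$ and the target doubling absorbs both the constant-factor gap in the base case (where $r_1$ falls short of $\delta(G)$ by a factor of $4/5$) and the accumulated additive loss, so that $r_\ell\ge D$ within the stated budget on $\ell$.

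The main obstacle is controlling the additive loss $cd\ell$ that scales with the step budget: a naive doubling argument ($r_{i+1}\ge 2r_i$) would require $r_i\ge 2|U'|$, which may fail at the outset when $r_i$ is still small. Exploiting the full $5/2$-expansion (rather than just doubling), combined with the smallness of $c$ in the hierarchy, is what provides the slack to sustain geometric growth over the full $\ell\le \log_2(2D/\delta(G))$ steps and to clear the $D\le C_\beta(G)/3$ threshold.
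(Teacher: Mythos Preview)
Your recurrence $r_{i+1} \geq \tfrac{5}{2}r_i - cd\ell$ and its unrolling are fine, but the assertion that ``the hierarchy $1/d \ll c$ ensures $cd\ell \ll r_1$'' is false, and the argument collapses there. From the hypotheses you only extract $r_1 \geq \tfrac{4}{5}\delta(G) \geq 4cd$; on the other hand $\ell$ can be as large as $\log_2(2D/\delta(G))$, and since $D$ may be of order $C_\beta(G)$ while $\delta(G)$ may be only of order $cd$, this ratio is unbounded. Concretely, as soon as $\ell \geq 6$ you already have $\tfrac{2}{3}cd\ell \geq 4cd \geq r_1$, so $r_1 - \tfrac{2}{3}cd\ell \leq 0$ and your unrolled bound says nothing. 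The hierarchy $1/d \ll c$ makes $cd$ large but has no bearing on $\ell$; subtracting all of $|U'|$ at every step is simply too much to swallow in the early rounds when $r_i = \Theta(cd)$. (A secondary gap: your bound $|U'| \leq t\ell$ assumes each $P_i$ has length at most $\ell$, but the definition of consecutive shortest paths in $B^\ell_G(v)$ does not guarantee this --- after removing earlier paths, shortest paths inside $B^\ell_G(v)$ may well be longer than $\ell$.)

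The idea you are missing is precisely what the consecutive-shortest-path structure buys: the interference at step $p$ is not all of $U'$ but only a $p$-dependent slice. If the $k$-th vertex $u$ of $P_i$ (with $v$ the $0$-th) were adjacent to some $w \in B^p_F(v)$, then the $v,w$-path of length $\leq p$ in $F$, followed by the edge $wu$, yields a $v,u$-walk of length $\leq p+1$ lying entirely in $B^\ell_G(v) \setminus \bigcup_{j<i}(V(P_j)\setminus\{v\})$, contradicting that $P_i$ is a shortest path there; hence $k \leq p+1$. Thus $|N_G(B^p_F(v)) \cap U'| \leq (p+1)t \leq (p+1)cd$. Now one maintains the cleaner inductive inequality $|N_F(B^p_F(v))| \geq |B^p_F(v)|$ (plain doubling): it suffices that $\tfrac{3}{2}r_p - (p+1)cd \geq r_p$, i.e.\ $r_p \geq 2(p+1)cd$, and inductively $r_p \geq 2^{p-1}r_1 \geq 2^{p-1}\cdot 4cd \geq 2(p+1)cd$. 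The loss $(p+1)cd$ grows only linearly in $p$ while $r_p$ grows geometrically, so it is always absorbed; replacing it by the global $|U'|$ destroys exactly this balance.
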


However, building up to now, a subtle fact is that once we carry out building a web by using Lemma \ref{ball} to expand from a fixed vertex, the resulting web would witness $\Omega(t)$ consecutive shortest paths whose union has size $\Omega(t\ell)$. Thus to make \ref{build} successful, in each iteration (at least for the last few iterations), we need to avoid $\Omega(t^2\ell)$ vertices from previous webs. Unfortunately, the space barrier tells that $C_{\beta}(G)=\Omega(n)$ and $\ell=\Omega(\log n)$, and thus $\Omega(t^2\ell)=\Omega(n)$. There are too many (up to $\Omega(n)$) internal vertices to avoid just by applying the current connection tools such as Lemma~\ref{distance}. It seems that we need a stronger connectivity (than Lemma~\ref{distance}) for building short paths, and this appears surprisingly difficult in sublinear expanders.\medskip

\noindent\textbf{Connecting through a random vertex set.} To overcome this, we modify our strategy for \ref{build} as follows: we first partition
$V(G)=V_1\cup V_2$ by including each vertex independently in $V_i$ uniformly at random. Then in each iteration of construction, we
\stepcounter{propcounter}
\begin{enumerate}[label = ({\bfseries \Alph{propcounter}\arabic{enumi}})]
   \item\label{build1} first apply Lemma~\ref{ball} to robustly grow a ball $B$ (of size roughly $\tfrac{C_{\alpha}(G)}{3}$) within $V_1$ which is disjoint from the interiors of previous webs; and meanwhile build many disjoint units in $V_2$;
   \item\label{connect1} then connect the ball up to many distinct units via pairwise disjoint short paths \textit{through} $V_2$ whilst avoiding the interiors of previous webs; Meanwhile we find consecutive shortest paths so as to extend every connection up to the core vertex of the ball.
\end{enumerate}
Here a path \textit{through} $V_2$ is one whose internal vertices all lie in $V_2$. To carry out \ref{build1}, our new perspective on the crux number is critical. Note that the union of all consecutive shortest paths within previous webs built as in \ref{connect1} has size at most $t^2\ell<C_{\beta}(G)/2$. By iteratively applying Lemma~\ref{prop} \ref{robust} on $G[V_1]$ (minus the interiors of previous webs), we will find a subgraph with average degree linear in $d(G)$. To ensure a ball of the desired size in the subgraph, we recap the definition of the $\alpha$-crux number and use the following easy fact:
\begin{quote}
\textit{Given a graph $G$ and a subgraph $H\<G$ with $d(H)\ge \tfrac{1}{k} d(G)$ for some $k\in \mathbb{N}$, we obtain that $C_{k\alpha}(H)\ge C_{\alpha}(G)$ for any $0<\alpha<\tfrac{1}{k}$.}
\end{quote}
Thus such a ball always exists from Lemma~\ref{ball}. To build disjoint units in $V_2$, again by using Lemma~\ref{prop} \ref{robust} on $G[V_2]$, our approach follows the usual averaging arguments as in \cite{LiuC4}. \medskip

Note that each connection in \ref{connect1} automatically avoids $O(t^2\ell)=O(C_{\beta}(G))$ internal vertices from consecutive shortest paths of previous webs. This turns out to be very efficient as all other internal vertices sum up to a size $t^2m^{O(1)}=o(C_{\beta}(G))$ (valid in the proof of Lemma~\ref{webs}) that is easy to avoid.  To make \ref{connect1} successful, it is desired that $G[V_2]$ is also a sublinear expander and thus we can apply Lemma~\ref{distance}.  However it is surprisingly difficult to verify whether with positive probability, $G[V_2]$ is a sublinear expander as in Definition \ref{def_sub} (see \cite{Mon-cycle} for a detailed explanation why the union bound is inefficient).
To get around this, we adopt an idea of Buci\'{c} and Montgomery \cite{Mon-cycle} where they bring many new perspectives on robust sublinear expansion in Definition~\ref{def_sub} with useful applications in decompositions of expanders into small ones. To be more precise, they show that very likely a random set $V\< V(G)$ inherits a slightly weak expansion property saying that one can expand from any reasonable-sized vertex set (not necessarily lie inside $V$) in a few more steps to reach up to $\tfrac{|V|}{2}$ vertices within $V$, even after additionally deleting superlinearly many edges.

To aid our proof, we develop a random variant of Lemma~\ref{distance}, where we also allow the deletion of sublinearly many vertices.


\begin{lemma}[Robust Connecting Lemma]\label{random_distance}
Suppose $\tfrac{1}{n},\tfrac{1}{d}\ll \tfrac{1}{k}\ll \varepsilon<1$ and $G$ is an $n$-vertex $(\varepsilon,k)$-robust-expander with average degree $d\ge \log^{100} n$, and $V\subseteq V(G)$ is a random subset chosen by including each vertex independently at random with probability $\tfrac{1}{2}$. Let $m=\log^4\tfrac{n}{k}$. Then, with probability $1-o(\tfrac{1}{n})$, for any two vertex sets $X_1, X_2$ each of size at least $x\geq km^9$, and a vertex set $W$ of size at most $\tfrac{x}{m^{11}}$, there exists a path through $V$ of length at most $2m^2$ between $X_1$ and $X_2$, which is vertex disjoint from $W$.
\end{lemma}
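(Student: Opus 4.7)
The approach is a BFS-meet-in-the-middle through $V$, mirroring the standard proof of Lemma~\ref{distance}, but exploiting the observation (central to Buci\'c--Montgomery~\cite{Mon-cycle}) that a random half of the vertex set of a sublinear expander inherits a workable weak form of robust expansion. Once such a random expansion event is in place, growing balls in $V\setminus W$ from $X_1$ and $X_2$ until they exceed $|V|/2$ forces them to overlap, yielding the desired short path.

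The first step is to isolate a high-probability \emph{random expansion event} $\mathcal{E}$ on $V$: with probability $1-o(1/n)$, for every $Y\subseteq V(G)$ with $km^{8}\le |Y|\le n/2$ and every $F\subseteq V(G)$ with $|F|\le |Y|/m^{10}$,
\[
\bigl|(N_G(Y)\cap V)\setminus F\bigr|\ \ge\ c_0\,\rho(|Y|)\,|Y|
\]
for some absolute $c_0>0$. Heuristically, $Y$ has at least $\rho(|Y|)|Y|$ external neighbors in $G$, each landing in $V$ independently with probability $1/2$, so Chernoff pins $|N_G(Y)\cap V|$ near $\rho(|Y|)|Y|/2$; the deletion of $F$ is negligible as $|F|\le |Y|/m^{10}\ll \rho(|Y|)|Y|$. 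In our regime $m=\log^4(n/k)$ with $k$ bounded, we have $\rho(|Y|)|Y|\ge \varepsilon\,|Y|/\sqrt m\gg \log n$, so the per-set failure probability is superpolynomially small. The naive union bound over all $Y$ is nevertheless hopeless, and I would follow Buci\'c--Montgomery in reducing $\mathcal{E}$ to expansion of a polynomially sized, structured family of witness sets---typically indexed by short BFS trajectories from singleton seeds together with a few edge deletions---whose expansion certifies expansion of every $Y$ reaching the corresponding size.

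Conditional on $\mathcal E$, fix $X_1,X_2,W$ as in the statement, let $A_0:=X_1\setminus W$, $B_0:=X_2\setminus W$, and iterate
\[
A_{i+1}:=A_i\cup\bigl((N_G(A_i)\cap V)\setminus W\bigr),\qquad B_{i+1}:=B_i\cup\bigl((N_G(B_i)\cap V)\setminus W\bigr).
\]
Since $|A_0|,|B_0|\ge x-|W|\ge km^{8}$ and $|W|\le x/m^{11}\le |A_i|/m^{10}$ throughout, event $\mathcal E$ applied with $Y=A_i$, $F=W$ gives $|A_{i+1}|\ge (1+c_0\rho(|A_i|))|A_i|$ as long as $|A_i|\le |V|/2$. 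With $\rho(y)\ge \varepsilon/\sqrt m$ on the relevant range, the recursion reaches $|A_s|>|V|/2$ within $s\le O(\sqrt m\log n/\varepsilon)\le m^2$ iterations, and symmetrically for $B_t$. Since $|A_s\cap V|,|B_t\cap V|>|V|/2$, they share a common vertex $v$, producing a path from $X_1$ to $X_2$ of length $s+t\le 2m^2$ with all internal vertices in $V$ and disjoint from $W$, as required.

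The main obstacle is the random expansion event $\mathcal E$: a term-by-term union bound over candidate sets $Y$ is infeasible because $\binom{n}{|Y|}$ swamps the Chernoff savings $\exp(-\Omega(\rho(|Y|)|Y|))$ when $|Y|$ is small. The substantive work is to package expansion in a way that only requires a polynomially sized union bound while remaining robust under adversarial choices of the forbidden set $W$ revealed after $V$. Once this random expansion is established, the remainder is a direct transcription of the classical BFS-meet-in-the-middle argument underlying Lemma~\ref{distance}.
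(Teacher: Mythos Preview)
Your high-level plan---grow balls through $V$ from $X_1$ and $X_2$ until they overlap---is exactly what the paper does. The gap is in your random expansion event $\mathcal{E}$: asking that \emph{every} intermediate set $Y$ satisfy $|(N_G(Y)\cap V)\setminus F|\ge c_0\rho(|Y|)|Y|$ with the \emph{same} random set $V$ used at every BFS step is precisely the formulation that cannot be pushed through. Your own iteration makes the difficulty concrete: $A_{i+1}$ is defined using $V$, so $A_{i+1}$ is $V$-measurable, and then you need expansion of $A_{i+1}$ again through the very same $V$; there is no fresh randomness left for concentration. No witness-set reduction repairs this, and in particular the paper does \emph{not} reduce to a polynomially sized family of witness sets as you suggest.

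What the paper (following Buci\'c--Montgomery) actually does is different in two essential respects. First, $V$ is realised as $V_1\cup\cdots\cup V_{m^2}$ where the $V_i$ are independent random subsets with small inclusion probability $p\approx 1/m^2$ (and one final layer with probability $\approx 1/2$). The ball $B_i$ at step $i$ is grown through $V_1,\ldots,V_{i-1}$ only, so $B_i$ is independent of $V_i$, and the expansion $B_i\to B_{i+1}$ uses only $V_i$. This decoupling is what allows a per-step concentration bound uniform over all possible values of $B_i$. Second, the per-step concentration is not naive Chernoff on $|N_G(B_i)\cap V_i|$ (which would be too weak because $p$ is tiny) but rather a structural dichotomy: either $B_i$ sends many vertex-disjoint large stars outside (and Chernoff on the \emph{centers} landing in $V_i$ suffices), or there is a bounded-degree bipartite ``certificate'' between $B_i$ and a large set of high-degree neighbours, to which Azuma's inequality applies. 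Either way the per-step failure probability is $\exp(-\Omega(|B_1|/\mathrm{poly}(m)))$, uniformly in $B_i$. Accumulated over $m^2$ layers this yields failure probability $\exp(-\Omega(m^2|U|))$ for the entire ball growth from a fixed well-expanding starting pair $(U,W)$, which is small enough to beat the full exponential union bound $\binom{n}{|U|}\binom{n}{|W|}$. A final deterministic step (every set of size $\ge km^9$ contains a well-expanding subset of size $\ge k$) handles general $X_1,X_2$. So the mechanism you should be describing is ``independent layers plus a star/bipartite Lipschitz dichotomy yielding an exponentially small failure probability that survives an exponential union bound'', not ``reduce to polynomially many witness sets''.
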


To sum up, this will allow us to build webs as in step \ref{build} by recursively carrying out \ref{build1}-\ref{connect1}. Now with the aid of Lemma~\ref{random_distance}, the final connections in \ref{connect} will be easy by linking their exteriors via disjoint paths through $V_2$. \medskip

To end this subsection, we give short proofs of Lemma~\ref{prop} and Lemma~\ref{ball}, while the proof of Lemma~\ref{random_distance} closely follows the strategy in \cite{Mon-cycle} and thus deferred to Appendix~\ref{sec4.1}.
\begin{proof}[Proof of Lemma~\ref{prop}]
For part \ref{robust}, let $G$ be a graph with $d(G):=d$ and write $k=\lceil\tfrac{C_{\alpha}(G)}{2}\rceil$. Fix any subset $W\subseteq V(G)$ with $|W|=k-1$. Then $|V(G)\setminus W|>k-1$ and let $n=|V(G)\setminus W|$. We arbitrarily take a subset $Y\<V(G)\setminus W$ of size $k-1$ and let $Z=V(G)\setminus(W\cup Y)$. Note that $|X\cup Y|<C_{\alpha}(G)$, and thus by the definition of $C_\alpha(G)$ we have $e(X\cup Y)<(k-1)\alpha d$ and $e(Y)<\tfrac{1}{2}(k-1)\alpha d$. Then \[e(Y,Z)= \sum_{v\in Y} d(v)-2e(Y)-e(X,Y)\ge \tfrac{1}{2}(k-1)d-\tfrac{3}{2}(k-1)\alpha d=(k-1)\tfrac{1-3\alpha}{2}d\ge (k-1)\alpha d.\] This gives $|Z|\ge k$ as otherwise we have $|Y\cup Z|\le 2(k-1)<C_{\alpha}(G)$ whilst $d(G[Y\cup Z])\ge \alpha d$, a contradiction.
Thus $n\ge 2k-1$ and by double counting the sums of $e(Y,Z)$ over all choices of $Y\<V(G)-W$, we have
\[
\tbinom{n}{k-1}(k-1)\tfrac{1-3\alpha}{2}d\le \sum_{|Y|=k-1} e(Y,Z)\le 2e(G-W)\tbinom{n}{k-2},
\]
which implies $d(G-W)=\tfrac{2e(G-W)}{n}\ge \tfrac{n-k+2}{n}\cdot\tfrac{1-3\alpha}{2}\cdot d\ge \tfrac{1-3\alpha}{4}d$ as $n\ge 2k-1$.

For part \ref{kexpan}, take a vertex set $X\subseteq V(G)$ with $|X|\leq \tfrac{C_\alpha (G)}{K+1}$. Suppose to the contrary that $N_G(X)< K|X|$, and we consider the subgraph $H:=G[X\cup N_G(X)]$. Thus
    \begin{equation*}
       d(H)\geq \tfrac{\delta(G)|X|}{|X\cup N_G(X)|}>\tfrac{1}{2K+2}d(G)\ge \alpha d(G).
    \end{equation*}
    However,
    \begin{equation*}
        v(H)< |X|+K|X|\leq C_\alpha(G),
    \end{equation*}
which contradicts to the definition of $C_\alpha(G)$.
\end{proof}

\begin{proof}[Proof of Lemma~\ref{ball}]
Given $\beta\le\tfrac{1}{5}$, we choose $\tfrac{1}{d}\ll c, \beta$  and let $G$ be with $\delta(G)\geq \tfrac{d(G)}{2}\geq 5cd$, and $P_1,\ldots,P_t$ be consecutive shortest paths from $v$ in $V(G)$. Let $U=\bigcup_{i\in [t]}V(P_i)$ and $F=G-(U \backslash\{v\})$. Recall that $D\le \tfrac{C_{\beta}(G)}{3}$. We shall show by induction on $p\geq1$ that, if $|B_{F}^p(v)|\leq \tfrac{C_{\beta}(G)}{3}$, then
\begin{equation}\label{inequality}
|N_F(B_{F}^p(v))|\geq |B_{F}^p(v)|.
\end{equation}
Also we will show that $|B_{F}^{1}(v)|\geq \tfrac{4}{5}\delta(G)$, which together with this inductive statement will prove the lemma. Actually,
we may take the conclusion for granted, and thus if $|B_{F}^p(v)|\leq \tfrac{C_{\beta}(G)}{3}$, then we have
\[|B_{F}^{p+1}(v)|=|B_{F}^{p}(v)|+|N_F(B_{F}^p(v))|\ge 2|B_{F}^{p}(v)|.
\]
Then there exists $\ell\le \log_2 (\tfrac{2D}{\delta(G)})$ for which $|B_{F}^{\ell}(v)|\ge 2^\ell |B_{F}^{1}(v)|\geq 2^{\ell} \cdot \tfrac{4}{5}\delta(G)\geq D$ as desired.

Thus, we only need to prove the inductive statement and $|B_{F}^{1}(v)|\geq \tfrac{4}{5}\delta(G)$. As the paths $P_i$ are also consecutive shortest paths from $v$ in $B^{\ell}_{G}(v)$, only the first $p+2$ vertices of each path $P_i$, including $v$, can belong to $N_G(B_{G-\cup_{j<i}(V(P_j)\backslash\{v\})}^p(v))$. Hence, only the first $p+2$ vertices of each of the path $P_i$, including the vertex $v$ can belong to $N_G(B_{F}^p(v))$. On the other hand, as we have at most $cd$ paths $P_i$, $|N_G(B_{F}^p(v))\cap U|\leq(p+1)cd$, so that
\begin{equation}\label{ballineq}
|N_{G-F}(B_{F}^p(v))|\leq(p+1)cd.
\end{equation}
In particular, when $p=0$, we have
\begin{equation}\label{less1}
|N_F(B_{F}^0(v))|=|N_F(v)|\geq d(v)-cd\geq \tfrac{4}{5}\delta(G).
\end{equation}
Hence, $|B_{F}^1(v)|\geq \tfrac{4}{5}\delta(G)\geq 4cd$.
Next we aim to prove (\ref{inequality}). When $p\geq 1$. Suppose that (\ref{inequality}) holds for all $p'$ with $0\leq p'<p$. By the assumption $|B_{F}^p(v)|\leq \tfrac{C_{\beta}(G)}{3}$, Proposition \ref{prop} \ref{kexpan} applied with $K=3/2$ and $\alpha=\beta$, implies that $|N_G(B_{F}^p(v))|\ge \tfrac{3}{2}|B_{F}^p(v)|$. Now by (\ref{ballineq}) and the fact that
\[|N_G(B_{F}^p(v))|=|N_{G-F}(B_{F}^p(v))|+|N_F(B_{F}^p(v))|,\]
it suffices to prove that
\begin{equation}\label{less2}
|B_{F}^p(v)|\ge 2(p+1)cd.
\end{equation}
By the induction hypothesis, we have $|B_{F}^{p'}(v)|=|B_{F}^{p'-1}(v)|+|N_F(B_{F}^{p'-1}(v))|\ge 2|B_{F}^{p'-1}(v)|$ for every $1\leq p'\le p$ and thus $|B_{F}^{p}(v)|\ge 2^{p-1}|B_{F}^{1}(v)|\ge2^{p-1}4cd\ge 2(p+1)cd$, finishing the proof.
\end{proof}

\subsection{Finish up: proofs of Lemmas \ref{dense1} and \ref{dense2}}\label{Thm3simi}

\begin{proof}[Proof of Lemma~\ref{dense1}]
We choose $\tfrac{1}{n}, \tfrac{1}{d}\ll \tfrac{1}{T}, c\ll\beta,\eps \le \tfrac{1}{150}$ and
let $G$ be an $n$-vertex $(\varepsilon, k)$-robust-expander with $\delta(G)\geq \tfrac{d(G)}{2}\geq \tfrac{\beta d}{16}$ and $k\le \min\{d^2,\tfrac{n}{T}\}$. Since $d^2m^{100}\leq \tfrac{C_{\beta}(G)}{\log C_{\beta}(G)}$ and $\beta\le \tfrac{1}{150}$, taking $t=\min\{\tfrac{\beta d}{8}, \sqrt{\tfrac{C_{\beta}(G)}{\log C_{\beta}(G)}}\}=\tfrac{\beta d}{8}$, we have
\begin{equation*}
   k\le d^2< t^2m, \ \  t^2m^{100}\leq \tfrac{\beta^2}{64}\cdot \tfrac{C_\beta(G)}{\log C_\beta(G)}\leq \tfrac{1}{3}C_\beta(G),
\end{equation*}
as $\tfrac{n}{k}$ and also $m$ are sufficiently large. Applying Lemma~\ref{webs} with $(k,t,d,x)=(k,\tfrac{\beta d}{8},\tfrac{\beta d}{8},25)$, we can obtain that $G$ contains a $K_{cd}$\footnote{Actually $c$ can arrive $\tfrac{1}{1200}$ when we take $\beta=\tfrac{1}{150}$.}-subdivision.
\end{proof}

\begin{proof}[Proof of Lemma~\ref{dense2}]
We choose $\tfrac{1}{n}, \tfrac{1}{d}\ll c\ll\beta,\eps \le \tfrac{1}{150}$ and
let $G$ be an $(\varepsilon, k)$-robust-expander with $\delta(G)\geq \tfrac{1}{2}d(G)\geq\tfrac{1}{16}\beta d $ and
\[d\geq \sqrt{\tfrac{n}{\log^p n}}, ~C_\beta(G)\geq k\log k,~k=\min\{\tfrac{d^2}{\log^p d}, \tfrac{n}{\log^p n}\},~\text{and let}~~m=\log^4\tfrac{n}{k}.\]
Note that by the assumption $d\geq \sqrt{\tfrac{n}{\log^p n}}$, we obtain that \[k=\min\{\tfrac{d^2}{\log^p d}, \tfrac{n}{\log^p n}\}> \tfrac{n}{\log^{2p} n}~~\text{and thus}~~m\le (\log\log n)^5.\]
Let $t=\min\{\tfrac{\beta d}{8},\sqrt{\tfrac{C_{\beta}(G)}{\log C_{\beta}(G)}}\}$. We shall find a $K_{ct}$-subdivision by Lemma~\ref{webs}.
From above, we want to claim that
\begin{equation}\label{ine1}
 k\leq t^2m, \ \ t^2m^{100}\le \tfrac{C_\beta(G)}{3}.
\end{equation}
Indeed, as $C_\beta(G)\geq k\log k$, we have
\begin{equation*}
    t^2m=\min\{\tfrac{\beta^2 d^2}{64},\tfrac{C_\beta(G)}{\log C_\beta(G)}\}m\geq \min\{\tfrac{\beta^2d^2}{64}, \tfrac{k\log k}{\log(k\log k)}\}m\geq k.
\end{equation*}
On the other hand, as $k>\tfrac{n}{\log^{2p}n}$ and $m<(\log \log n)^5$, we have
\begin{equation*}
    \log C_{\beta}(G)\geq \log(k\log k)>\log (\tfrac{n}{\log^{2p}n})>3m^{100}.
\end{equation*}
Hence,
\begin{equation*}
t^2m^{100}\le\tfrac{C_{\beta}(G)}{\log C_{\beta}(G)}m^{100}\le\tfrac{C_{\beta}(G)}{3}.
\end{equation*}
Consequently, (\ref{ine1}) holds. Finally, applying Lemma~\ref{webs} with $(k,t,d,x)=(k,t,\tfrac{\beta d}{8},25)$, we obtain that $G$ contains a $K_{ct}$-subdivision.
\end{proof}



\section{Proof of Lemma~\ref{webs}}

\begin{proof}[Proof of Lemma~\ref{webs}]
Choose $\tfrac{1}{n}, \tfrac{1}{d}\ll \tfrac{1}{T}, c\ll\beta,\eps \le \tfrac{1}{150}$ and $x,k\in \mathbb{N}$ with $x>13, k\leq\tfrac{n}{T}, d\geq\log^{25x} n$. Let $G$ be an $n$-vertex $(\eps, k)$-robust-expander with $d(G)\ge d$, $\delta(G)\geq \tfrac{d}{2}$ and given $t\in \mathbb{N}$ satisfying
\[ t=\min\left\{d, \sqrt{\tfrac{C_{\beta}(G)}{\log C_{\beta}(G)}}\right\}, \ k\le t^2m, \ t^2m^{4x} \leq \tfrac{1}{3}C_{\beta}(G),~~\text{where}~~ m= \log^4 \tfrac{n}{k}.\]
Since $C_{\beta}(G)\ge \beta d$, we get that $t\ge m^{3x}$.
To build a $K_{ct}$-subdivision, we first partition $V(G)=V_1\cup V_2$, where each vertex is independently included in $V_i$, $i\in [2]$, uniformly at random. It follows from standard concentration methods that for each $i\in [2]$, $G_i:=G[V_i]$ has $d(G_i)\ge \tfrac{d}{3}$.

\medskip
\noindent \textbf{Phase $1$. Building $t$ webs that have disjoint interiors}.
\medskip

Let $\ell=\log_2(\tfrac{C_{\beta}(G)}{d})$. We need the following main result.
\begin{claim}\label{many_web}
The graph $G$ contains $t$ distinct $(4ct, \ell+m^{3}, 2m^x,\tfrac{t}{80},m^3)$-webs $\mathcal{W}_1,\ldots, \mathcal{W}_{t}$ such that they have disjoint interiors and each $\mathcal{W}_i$ satisfies $|\mathsf{Int}(\mathcal{W}_i)\cap V_1|\le 4ct\ell$ and $|\mathsf{Int}(\mathcal{W}_i)\cap V_2|\le 10ctm^{x+3}$.
\end{claim}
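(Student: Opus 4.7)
The plan is to construct $\mathcal{W}_1,\ldots,\mathcal{W}_t$ one at a time, implementing the strategy laid out in Section~\ref{sec_3.1}. First, partition $V(G)=V_1\cup V_2$ uniformly at random and fix the partition conditional on three events, each holding with probability $1-o(1)$: (i) $d(G[V_i])\ge d/3$ for $i\in[2]$ (by Chernoff), (ii) the conclusion of Lemma~\ref{random_distance} applied with $V=V_2$, and (iii) an analogous ``weak expansion inside $V_2$'' property (in the spirit of Buci\'c--Montgomery) sufficient, together with Lemma~\ref{prop} \ref{robust}, to build vertex-disjoint units inside $G[V_2]$.

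Suppose $\mathcal{W}_1,\ldots,\mathcal{W}_{i-1}$ are already built, and set $U=\bigcup_{j<i}\mathsf{Int}(\mathcal{W}_j)$. By the inductive bounds, $|U\cap V_1|\le 4ct^2\ell$ and $|U\cap V_2|\le 10ct^2m^{x+3}$, both much smaller than $C_\beta(G)/3$ since $t^2m^{4x}\le \tfrac{1}{3}C_\beta(G)$ and $x>13$. For the \emph{ball in $V_1$}, Lemma~\ref{prop} \ref{robust} applied to $G[V_1]$ shows $d(G[V_1]-U)=\Omega(d)$, so I pass to a subgraph $H_1\subseteq G[V_1]-U$ with $\delta(H_1)\ge d(H_1)/2=\Omega(d)$; by the crux-monotonicity observation $C_{\beta'}(H_1)\ge C_\beta(G)$ for a suitably smaller threshold $\beta'>0$, Lemma~\ref{ball} applied with $D=\lceil C_\beta(G)/3\rceil$ yields an integer $\ell'\le\ell$ such that for any vertex $v\in V(H_1)$ and any $\le cd$ consecutive shortest paths from $v$ in $B^{\ell'}_{H_1}(v)$, removing their internal vertices still leaves a ball of size $\ge D$ around $v$ inside $H_1$. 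Symmetrically, Lemma~\ref{prop} \ref{robust} and event (iii) let me build $4ct$ vertex-disjoint $(2m^x,t/80,m^3)$-units $F_1,\ldots,F_{4ct}$ in $G[V_2]-U$, disjoint from $V(H_1)$, with cores $u_1,\ldots,u_{4ct}$. For each $j$, I also reserve a ``bridge'' set $T_j\subseteq V_2$ with $u_j\in T_j$ and $|T_j|\ge C_\beta(G)/3$, obtained by a ball-growing argument analogous to Lemma~\ref{ball} inside $G[V_2]-U$; the sets $T_j$ are allowed to overlap with each other.

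Finally, I build the central branches one at a time. Fix a core vertex $v\in V(H_1)$ for $\mathcal{W}_i$. For each $j\in[4ct]$ in turn, I apply Lemma~\ref{random_distance} to find a path $R_j$ through $V_2$ of length $\le 2m^2$ between the current ball around $v$ (updated to exclude the $V_1$-portions of earlier branches $Q_{j'}$, $j'<j$) and $T_j$, avoiding $W_j:=U\cup\bigcup_{j'<j}V(R_{j'})\cup\bigcup_{j'\ne j}(\mathsf{Int}(F_{j'})\setminus\{u_{j'}\})$; a short path inside $T_j$ of length $O(\log n)\le m^2$ then extends $R_j$ to $u_j$. I join the ball-endpoint of $R_j$ back to $v$ by a fresh consecutive shortest path in the ball, licensed by Lemma~\ref{ball} (after rescaling $c$ to accommodate $4ct\le cd$ paths), producing the branch $Q_j$ of length $\le\ell+m^3$. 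The union $\bigcup_j Q_j\cup\bigcup_j F_j$ is the web $\mathcal{W}_i$, and one checks $|\mathsf{Int}(\mathcal{W}_i)\cap V_1|\le 4ct\ell$ (from the ball-path portions) and $|\mathsf{Int}(\mathcal{W}_i)\cap V_2|\le 4ct(m^3+2m^{x+3})\le 10ctm^{x+3}$. The main obstacle is the bookkeeping in the connection phase: at step $j$, $|W_j|=|U|+O(jm^3)+O(ctm^{x+3})=O(t^2m^{x+3})$ must lie below $\min(|\text{ball}|,|T_j|)/m^{11}\ge C_\beta(G)/(6m^{11})\ge t^2m^{4x-11}/18$, so the hypothesis $t^2m^{4x}\le\tfrac{1}{3}C_\beta(G)$ with $x>13$ furnishes the polynomial-in-$m$ slack that makes all $4ct$ applications of Lemma~\ref{random_distance} go through. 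This is precisely where the random-variant Lemma~\ref{random_distance}, rather than the deterministic Lemma~\ref{distance}, is indispensable: the avoidance set $U$ can exceed any polynomial in $d$.
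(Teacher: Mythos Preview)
Your high-level plan is right and matches the paper: iterate, grow a ball in $V_1$ via Lemma~\ref{ball}, build units in $V_2$, and link them with Lemma~\ref{random_distance}, feeding the $V_1$-portions back as consecutive shortest paths. The gap is in your ``bridge set'' step, which the paper does \emph{not} use. You grow a ball $T_j$ around each unit core $u_j$ inside $G[V_2]-U$, land in $T_j$ via Lemma~\ref{random_distance}, and then claim ``a short path inside $T_j$'' extends to $u_j$. But this extension must also avoid the previously built $R_{j'}$ (and the interiors of the other units), none of which were excluded when $T_j$ was grown; there is no reason such a path exists. Re-growing $T_j$ at step $j$ does not help: to invoke Lemma~\ref{ball} you would pass to a subgraph of $G[V_2]$ with $\delta\ge d(\cdot)/2$ after deleting $\Theta(ct^2m^{x+3})$ vertices, and nothing ensures the \emph{specific} vertex $u_j$ survives in that subgraph. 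Lemma~\ref{ball}'s robustness is only against consecutive shortest paths \emph{from the centre}, which the $R_{j'}$ are not.

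The paper sidesteps this by never growing balls around the $u_j$. It builds a large surplus of units---$2tm^{x+12}$ rather than your $4ct$---inside $G_2-W_2$ (Subclaim~\ref{web-unit1}, which itself uses only Lemma~\ref{prop}\ref{robust} for stars and Lemma~\ref{random_distance} for linking, so your unspecified ``event~(iii)'' is unnecessary). In each connection step the target set for Lemma~\ref{random_distance} is the \emph{union of the exteriors of all still-available units}, of size $\Omega(t^2m^{2x+12})$; the landed leaf then reaches its core $u_j$ along a path already present inside $F_j$, which by construction is disjoint from everything else. A short overuse count at the end prunes the few pendant stars that were touched. In short: drop the bridge sets, overbuild units, and let the units themselves supply the last mile.
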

\begin{proof}[Proof of Claim~\ref{many_web}]
As sketched in Section~\ref{sec_3.1}, we shall proceed by first growing a ball inside $V_1$ and meanwhile building many disjoint units in $V_2$, whilst avoiding the interiors of previous webs. Suppose we have built $\mathcal{W}_1,\ldots, \mathcal{W}_{t'}$ as desired for some $0\le t'< t$, and let
\[W_i=\cup_{j\in[t']}\mathsf{Int}(\mathcal{W}_j)\cap V_i~\text{for}~i\in[2].\]
Then $|W_1|\le 4ct^2\ell$ and $|W_2|\le t^2m^{2x}$.
We first build many disjoint units in $V_2$ in the following, whose proof is postponed later.
\begin{subclaim}[build units]\label{web-unit1}
The graph $G_2-W_2$ contains vertex-disjoint $(2m^x,\tfrac{t}{40},m^3)$-units $F_1,\ldots, F_{2tm^{x+12}}$.
\end{subclaim}
Denote by $f_i$ the core vertex of each $F_i$ as above. 
Let \[W_3=\bigcup_{i\in [2tm^{x+12}]}V(F_i)~\text{ and}~ W_4=\bigcup_{i\in [2tm^{x+12}]}\mathsf{Int}(F_i).\] Then we have $|W_4|\leq 5tm^{2x+15}$, $|W_3|\leq \tfrac{t^2m^{2x+15}}{4}$.

Next, we shall grow a ball in $V_1-W_1$. Observe that $d(G_1)\ge \tfrac{1}{3}d$ and thus one can easily choose a subgraph $H\<G_1$ such that $\delta(H)\ge \tfrac{1}{2}d(H)\ge \tfrac{1}{2}d(G_1)$. Then by definition $C_{3\beta}(H)\ge C_{\beta}(G)$. Note that \[|W_1|\leq 4ct^2\ell<\tfrac{C_{\beta}(G)}{2}~\text{ as}~\ell<\log C_{\beta}(G)~\text{and}~ t\le \sqrt{\tfrac{C_{\beta}(G)}{\log C_{\beta}(G)}}.\] Applying Lemma~\ref{prop}~\ref{robust} to $H$, we get that $d(H-W_1)\geq \tfrac{d(H)}{10}\ge \tfrac{d}{30}$. Further choose a subgraph $H'\<H-W_1$ such that $\delta(H')\ge \tfrac{1}{2}d(H')\ge \tfrac{1}{2}d(H-W_1)\ge \tfrac{d}{60}$ and pick a vertex $v\in V(H')$ of degree at least $\tfrac{d}{30}$.
\begin{subclaim}
There is a $(4ct,\ell+m^3,m^x,\tfrac{t}{80},m^3)$-web centered at $v$, denoted $\mathcal{W}_{t'+1}$, which is internally disjoint from $W_1\cup W_2$, and $|\mathsf{Int}(\mathcal{W}_{t'+1})\cap V_1|\le 4ct \ell$, $|\mathsf{Int}(\mathcal{W}_{t'+1})\cap V_2|\le 10ctm^{x+3}$.
\end{subclaim}
 \begin{proof}
%
To build the desired $(4ct, \ell+m^3, m^x, \tfrac{t}{80}, m^3)$-web, we shall proceed by finding $4ct$ internally vertex-disjoint paths $Q_1,\ldots,Q_{4ct}$ in $G$ from $v$ satisfying the following rules.
\stepcounter{propcounter}
\begin{enumerate}[label = ({\bfseries \Alph{propcounter}\arabic{enumi}})]
\rm\item\label{youtiao1} Each path $Q_i$ is a unique $v,f_{x_i}$-path, where $x_i\in[2tm^{x+12}]$, and furthermore $|V(Q_i)\cap V_1|\le \ell, |V(Q_i)\cap V_2|\le m^3$.
\rm\item\label{youtiao2} Each path does not contain any vertex in $W_2\cup W_4$ as an internal vertex.
\rm\item\label{youtiao3} The sequence of subpaths $Q_i[B^{\ell}_{H'}(v)]$ $(i\in[4ct])$ form consecutive shortest paths from $v$ in $B^{\ell}_{H'}(v)$.
\end{enumerate}

Assume that we have iteratively obtained a collection of shortest paths $\mathcal{Q}=\{Q_1,\ldots,Q_s\}$ $(0\leq s<4ct)$ as in \ref{youtiao1}-\ref{youtiao3}. Note that \ref{youtiao3} gives $s$ consecutive shortest paths $P_1,\ldots,P_s$ from $v$ in $B^{\ell}_{H'}(v)$, where $P_i=Q_i[B^{\ell}_{H'}(v)]$ and write $\mathcal{P}=\{P_1,\ldots,P_s\}$. Note that $C_{30\beta}(H')\geq C_{\beta}(G)$. Applying Lemma~\ref{ball} to $H'$ with $D=\tfrac{C_\beta(G)}{3}$ , we get
$$|B^{\ell}_{H'-\mathsf{Int}(\mathcal{Q})}(v)|=|B_{H'-\mathsf{Int}(\mathcal{P})}^{\ell}(v)|\geq  \tfrac{C_{\beta}(G)}{3}>t^2m^{4x-1}.$$
 We call a unit $F_i$ \textit{available} if its core vertex is not used as an endpoint of a path in $\mathcal{Q}$.
Let $U'$ be the leaves of all pendent stars in available units. Then we have
$$|U'|\geq 2m^{x}\cdot\tfrac{t}{40}(2tm^{x+12}-4ct)>\tfrac{t^2m^{2x+12}}{20}.$$
Note that
$$|W_2|+|W_4|+|V(\mathcal{Q})\cap V_2|\leq t^2m^{2x}+5tm^{2x+15}+4ctm^3<2t^2m^{2x}. $$
Applying Lemma~\ref{random_distance} on $G$ with $V_2,B_{H'-\mathsf{Int}(\mathcal{Q})}^{\ell}(v)$, $U'$, $W_4\cup W\cup (V(\mathcal{Q})\cap V_2)$ playing the roles of $V,X_1,X_2,W$, respectively, we can find a shortest path through $V_2$ of length at most $m^3$, say $Q$, joining a vertex $w$ from $B_{H'-\mathsf{Int}(\mathcal{Q})}^{\ell}(v)$ to a leaf $u_j\in \mathsf{Ext}(F_j)$ for some available unit $F_j$. One can easily find a $v,w$-path within $B^{\ell}_{H'-\mathsf{Int}(\mathcal{P})}(v)$, denoted as $P_{s+1}$,  and a $u_j,f_j$-path inside the unit $F_j$, denoted as $R_{s+1}$. Let $Q_{s+1}=P_{s+1}QR_{s+1}$. Then the paths $Q_1,\ldots,Q_{s+1}$ satisfy \ref{youtiao1}-\ref{youtiao3}. Repeating this for $s=0,1,\ldots,4ct$, yields $4ct$ paths $Q_1,\ldots, Q_{4ct}$ as desired.

For every $i\in [2tm^{x+12}]$, we say a pendent star in $F_i$ is \emph{overused} if at least $\tfrac{t}{80}$ (half of) leaves are used in $V(\mathcal{Q})$ and a unit is  \emph{bad} if at least $m^x$ pendent stars are overused. Then there are at most $\tfrac{4ctm^3}{t/80}=320cm^3<m^x$ overused pendent stars, and therefore there is no bad unit. Then removing from every unit the second-level branches each attached with overused pendant stars and combining corresponding paths $Q_i$ yield a $(4ct, \ell+m^3, m^x, \tfrac{t}{80}, m^3)$-web as desired.
\end{proof}
Up to now, the proof of Claim~\ref{many_web} is completed.
\end{proof}
\medskip
\noindent \textbf{Phase $2$. Building a $K_{ct}$-subdivision: linking webs via paths through $V_2$}.
\medskip

Let $\mathcal{W}_1, \ldots, \mathcal{W}_{2ct}$ be a subfamily of internally vertex-disjoint $(4ct,\ell+m^3, m^x,$ $\tfrac{t}{80}, m^3)$-webs as in Claim~\ref{many_web} and let $W=\bigcup_{i\in[2ct]}\mathsf{Ctr}(\mathcal{W}_i)\cap V_2$. Then $|W|\leq 8c^2t^2m^3$. We shall inductively construct a $K_{ct}$-subdivision as follows. Let $\mathcal{P}$ be a maximum collection of vertex-disjoint paths through $V_2$ of length at most $4m^3$ under the following rules.
\stepcounter{propcounter}
\begin{enumerate}[label = ({\bfseries \Alph{propcounter}\arabic{enumi}})]
\rm\item\label{con1} Each path $P_{ij}\in \mathcal{P}$ connects a pair of core vertices of unit from different webs $\mathcal{W}_i$ and $\mathcal{W}_j$. We say the corresponding units are then \textit{occupied}.

\rm\item\label{con2} For each pair of webs, there is at most one path in $\mathcal{P}$ between the core vertices from their respective units. All paths in $\mathcal{P}$ are internally disjoint from $W$.
\end{enumerate}
It is easy to see that the paths $P_{ij}$ can be extended using branches within $\mathcal{W}_i$ and $\mathcal{W}_j$ to a path of length at most $2\ell+8m^3$ joining the core vertices of those two webs.
Based on \ref{con2}, at most $4c^2t^2$ paths are built and thus $|V(\mathcal{P})|\le 16c^2t^2m^3$.
\begin{itemize}
    \item For each second-level branch in a unit, if more than $\tfrac{t}{80}$ (half of) leaves of the corresponding pendant star belong to $V(\mathcal{P})$ or a vertex in the second-level branch belongs to $V(\mathcal{P})$, then we say the second-level branch is \textit{used}.
     \item For each unit within a web, if more than $\tfrac{m^x}{2}$ second-level branches are used, then we say the unit \textit{over-used}.
      \item If more than $ct$ units within the web are over-used, then the web is \textit{bad}, and otherwise it is \textit{good}.
\end{itemize}
Note that all webs are internally vertex disjoint. As the paths in $\mathcal{P}$ do not intersect the centers of webs, a web is bad only when more than $ct$ units within the web are over-used, each such unit containing at least $\tfrac{m^x}{2}$ vertices in $V(\mathcal{P})$. This implies that each bad web contains at least $\tfrac{ctm^x}{2}$ vertices of $V(\mathcal{P})$ in its interior. Hence, there are at most $\tfrac{16c^2t^2m^3}{ctm^x/2}<ct$ bad webs. The following claim would give a desired $K_{ct}$-subdivision by combining $ct$ good webs and the paths in \ref{con1}.
\begin{claim}
    For every pair of good webs, there is a path in $\mathcal{P}$ between two core vertices from their respective units.
\end{claim}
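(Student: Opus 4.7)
\noindent
The plan is to argue by contradiction: assume $\mathcal{W}_i$ and $\mathcal{W}_j$ are good webs with no path in $\mathcal{P}$ connecting their unit cores, and exhibit a legal new path to contradict the maximality of $\mathcal{P}$.

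First I would extract a large reservoir of ``available leaves'' in each web. Since $\mathcal{W}_r$ $(r\in\{i,j\})$ is good, at most $ct$ of its $4ct$ units are over-used; by rule \ref{con2}, at most $2ct-1$ of its units have their core vertex used as an endpoint of some path in $\mathcal{P}$ (one per distinct other web). Hence at least $ct$ units in $\mathcal{W}_r$ are both non-over-used and non-occupied. Each such unit has at least $m^x/2$ unused second-level branches, and each unused branch has a pendant star with at most $t/80$ leaves in $V(\mathcal{P})$ (leaving $\Omega(t)$ leaves outside $V(\mathcal{P})$) and no internal vertex of the branch itself lying in $V(\mathcal{P})$. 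The resulting set $X_r$ of available leaves in $\mathcal{W}_r$ satisfies $|X_r| = \Omega(ct^{2} m^{x})$, and $X_r \subseteq V_2$ since the units were constructed inside $G_2 - W_2$.

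Next I would apply Lemma~\ref{random_distance} with $V=V_2$, with $X_1,X_2$ as above, and with forbidden set $W^\star := W \cup V(\mathcal{P})$. Using $k\le t^2 m$ and the bounds $|W|\le 8c^{2}t^{2}m^{3}$, $|V(\mathcal{P})|\le 16c^{2}t^{2}m^{3}$ established above, and taking $x\ge 14$ (compatible with the hypothesis $x>13$), one checks $|X_r|\ge ct^{2}m^{x}/200 \ge km^{9}$ and $|W^\star| = O(c^{2}t^{2}m^{3}) \le |X_r|/m^{11}$ for $c$ sufficiently small. The lemma then produces a path $P$ through $V_2$ of length at most $2m^{2}$, disjoint from $W^\star$, from a leaf $\ell_i \in X_i$ to a leaf $\ell_j \in X_j$.

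Finally I would extend $P$ at each end through the corresponding pendant-star center (one edge) and along the associated unused second-level branch (length at most $m^3$) up to the unit core vertex $u_r$. The resulting path $P^\star$ has length at most $2m^{3}+2m^{2}+2 \le 4m^{3}$, joins two unit cores in distinct webs $\mathcal{W}_i$ and $\mathcal{W}_j$, and has all internal vertices in $V_2$ (the extensions lie inside units, which were built inside $G_2$). Its internal vertices avoid $V(\mathcal{P})$ since $P$ does so by construction while the extending branches and stars were chosen unused; and they avoid $W$ because unit interiors are disjoint from web centers $\mathsf{Ctr}(\mathcal{W}_\bullet)$. Hence $P^\star$ satisfies both \ref{con1} and \ref{con2}, so $\mathcal{P}\cup\{P^\star\}$ is a strictly larger legal family, contradicting the maximality of $\mathcal{P}$. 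The main delicate point will be confirming that the available leaf reservoir remains sufficiently large after excising over-used units, occupied units, and leaves swallowed by $V(\mathcal{P})$: this is where the choice $x>13$ and the sublinear bounds $|V(\mathcal{P})|,|W|=O(c^{2}t^{2}m^{3})$ on the forbidden set must be combined carefully so that Lemma~\ref{random_distance} is genuinely applicable.
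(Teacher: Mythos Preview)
Your proposal is correct and follows essentially the same approach as the paper: argue by contradiction, extract from each good web a large exterior of available leaves (at least $\Omega(ct^{2}m^{x})$) by discarding over-used and occupied units, apply Lemma~\ref{random_distance} with forbidden set $W\cup V(\mathcal{P})$ to find a short path through $V_2$, and extend it inside the units to a path satisfying \ref{con1}--\ref{con2}. You are simply more explicit than the paper about the counting (e.g.\ the $4ct-ct-(2ct-1)\ge ct$ estimate for available units) and about verifying the numerical hypotheses $|X_r|\ge km^{9}$ and $|W^{\star}|\le |X_r|/m^{11}$ of Lemma~\ref{random_distance}, which the paper handles in one line via $|W\cup V(\mathcal{P})|\le t^{2}m^{3}$.
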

\begin{proof}
Without loss of generality, we may assume for a contradiction that $\mathcal{W}_1$ and $\mathcal{W}_2$ are a pair of good units for which there is no desired path in $\mathcal{P}$.
If a web is good, then by definition it has at least $ct$ units that are not occupied and not over-used, namely, each such unit has at least $\tfrac{m^x}{2}$ second-level branches attached with at least $\tfrac{t}{80}$ leaves. Therefore, the new web obtained by deleting over-used units and all used second-level branches, still has an exterior of at least $\tfrac{ct^2m^x}{320}$ vertices. Hence, since $|W\cup V(\mathcal{P})|\leq t^2m^3$, by Lemma~\ref{random_distance}, we can connect the exteriors of two new webs by a path of length at most $m^3$ whilst avoiding $W\cup V(\mathcal{P})$. One can easily extend this path to a desired path as in \ref{con1}.
\end{proof}
\end{proof}
We now complement the proof of Subclaim~\ref{web-unit1} to end the section.
\begin{proof}[Proof of Subclaim~\ref{web-unit1}]
If we have $F_1,\ldots, F_{s}$ for some $0\leq s<2tm^{x+12}$, then the set $W:=\bigcup_{i\in[s]}V(F_i)$ has size at most $2tm^{x+12}[2m^x(m^3+\tfrac{t}{20})]\leq \tfrac{t^2m^{2x+12}}{4}$.
To build one more unit, we first claim that $G_2-W_2-W$ contains vertex-disjoint stars $S_1,\cdots, S_{tm^{4x}}$ where each $S_i$ has exactly $\tfrac{t}{30}$ leaves.
Indeed, we choose $G_2'\<G_2$ such that $\delta(G_2')\ge\tfrac{1}{2}d(G_2')\ge \tfrac{1}{2}d(G_2)$. Then by definition we obtain $C_{3\beta}(G_2')\ge C_{\beta}(G)$. Let $S_1,\cdots, S_{p}$ be a maximal collection of stars as required in $G_2'$ for some $0\leq p<tm^{4x}$. Then $W':=\bigcup_{i\in[p]}V(S_i)$ has size at most $\tfrac{t^2m^{4x}}{10}$.  Since  $G_2'$ is $(\tfrac{C_{\beta}(G)}{2},\tfrac{1}{10})$-dense and $|W_2\cup W'\cup W|\le t^2m^{2x}+\tfrac{t^2m^{4x}}{10}+\tfrac{t^2m^{2x+12}}{4}\leq t^2m^{4x}<\tfrac{C_{\beta}(G)}{3}$, Lemma~\ref{prop} \ref{robust} implies that $d(G_2'-W_2-W')\geq \tfrac{d(G'_2)}{10}\ge\tfrac{t}{30}$. Thus we can take a star denoted as $S_{p+1}$ with $\tfrac{t}{30}$ leaves, which is disjoint from $W'$, contrary to the maximality of $p$.

We take disjoint stars (renaming if necessary) $S_{1},\cdots, S_{tm^{3x-1}}$, $T_1,\ldots, T_{4tm^{4x-1}}$ as above, with centers $u_{1},\ldots, u_{tm^{3x-1}}$, $v_1,\ldots, v_{4tm^{4x-1}}$, respectively. Let $Z=\{u_{1},\ldots, u_{tm^{3x-1}},v_1,\ldots, v_{4tm^{4x-1}}\}$.
Let $\mathcal{P}$ be a maximum collection of internally disjoint paths $P_{ij}$ in $G_2-W-W_2$ satisfying the following rules.
\stepcounter{propcounter}
\begin{enumerate}[label = ({\bfseries \Alph{propcounter}\arabic{enumi}})]
\rm\item\label{1pathlab2} Each path $P_{ij}$ in $\mathcal{P}$ is a unique $u_i,v_j$-path of length at most $m^3$.
\rm\item\label{1pathlab3} Each $P_{ij}$ does not contain any vertex in $Z\cup W_2\cup W$ as an internal vertex.
\end{enumerate}

Now we claim that there is a center $u_i$ connected to at least $2m^x$ distinct centers $v_j$ via the paths in $\mathcal{P}$. Suppose to the contrary that every $u_i$ is connected to less than $2m^{x}$ centers $v_j$. Then $|\mathcal{P}|\leq 2tm^{4x-1}$ and $|V(\mathcal{P})|\leq 2tm^{4x-1} \cdot m^3\leq 2tm^{4x+2}$. Let
\begin{equation*}
U:=\bigcup_{i\in [tm^{3x-1}]}(S_i\backslash\{u_i\})\setminus V(\mathcal{P}),
\end{equation*}
and $V$ be the set of leaves of all stars $T_i$ whose centers are not used as endpoints of paths in $\mathcal{P}$. Then we have
\begin{equation}\label{webine1}
|U|\geq \tfrac{t^2m^{3x-1}}{30}-2t m^{4x+2}\geq\tfrac{t^2m^{3x-1}}{60}, \nonumber
\end{equation}
and
\begin{equation}\label{webine2}
|V|\geq \tfrac{t}{30}\cdot(4t m^{4x-1}-2tm^{4x-1})\geq \tfrac{t^2m^{4x-1}}{60}.\nonumber
\end{equation}
On the other hand, recall that $t\ge m^{3x}$, thus
\[
|W|+|W_2|+|\mathsf{Int}(\mathcal{P})|+|Z|
\leq  \tfrac{t^2m^{2x+12}}{4}+t^2m^{2x}+3t m^{5x+2}+4t m^{4x}
\leq t^2m^{2x+12}.\nonumber
\]
Hence, applying Lemma~\ref{random_distance} with $V_2,U,V,W\cup W_2\cup \mathsf{Int}(\mathcal{P})\cup Z$ playing the roles of $V,X_1,X_2,W$, respectively, we obtain vertices $x_{1}\in U, x_{2}\in V$ and a path through $V_2$ of length at most $m^3$ connecting $x_{1}$ and $x_{2}$ whilst avoiding vertices in $W\cup W_2\cup \mathsf{Int}(\mathcal{P})\cup Z$. Denote by $S_{k_1}, T_{k_2}$ the stars which contain $x_{1}, x_{2}$ as leaves, respectively. This yields a $u_{k_1},v_{k_2}$-path $P_{k_1,k_2}$, which is internally disjoint from $W\cup W_2\cup \mathsf{Int}(\mathcal{P})\cup Z$. Hence, $P_{k_1,k_2}$ satisfies \ref{1pathlab2} and \ref{1pathlab3}, a contradiction to the maximum of $\mathcal{P}$.

Therefore, there exists a center $u_i$ connected to $2m^{x}$ distinct centers $v_j$, say $v_1,\ldots v_{2m^{x}}$, which correspond to stars $T_1, \ldots T_{2m^{x}}$. Recall that all stars in $\{T_1, \ldots T_{2m^{x}}\}$ are vertex disjoint and the number of vertices in all $P_{i,j}$ $(j\in [2m^{x}])$ is at most $2m^{x+3}<\tfrac{t}{120}$. Hence, every $T_j$ $(j\in[2m^{x}])$ has at least $\tfrac{t}{40}$ leaves that are not used in $P_{i,j}$ for any $j\in [2m^{x}]$. These stars, together with the corresponding paths to $u_i$, form a desired unit in $G-W-W_2$. Thus, we can greedily pick vertex-disjoint units as above.
\end{proof}

\section{Concluding remarks}

As discussed in the introduction, the term $\sqrt{\tfrac{C_{\alpha}(G)}{\log C_{\alpha}(G)}}$ is tight by considering the $d/1000$-blow-up of a $d$-vertex $1000$-regular expander, which consists of large complete bipartite graphs. It would be
possible that the logarithmic term  can be removed by excluding any fixed bipartite graph, thus generalising the result of Liu-Montgomery \cite{LiuC4}.


\begin{appendix}

\section{Proof of Lemma~\ref{lem_sparse}}\label{im3}
As promised, we shall prove Lemma~\ref{lem_sparse}, and this improves a result in \cite{crux} as follows.
\begin{lemma}[\cite{crux}]\label{ebound}
    Suppose $\tfrac{1}{n}\ll c\ll \eps\ll \tfrac{1}{100}$ and $d\leq \exp(\log^{1/6}n)$. If an $n$-vertex $(\eps,\eps d)$-robust-expander $G$ has average degree at least $\tfrac{d}{2}$ and minimum degree $\delta(G)\geq \tfrac{d}{4}$, then $G$ contains a $K_{cd}$-subdivision.
\end{lemma}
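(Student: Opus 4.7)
The plan is to mirror the proof of Lemma~\ref{dense1}, reducing to Lemma~\ref{webs}, with a case analysis that verifies its hypotheses under the weaker assumption $d\leq\sqrt[3]{n/\log^{100}n}$. Fix $\beta=\eps$, $k=\eps d$, $m=\log^4(n/k)$, and $x=14$; the hypothesis $n\geq d^3\log^{100}n$ gives $m\leq \log^{O(1)} n$ and $k\leq n/T$ immediately, leaving the conditions $d\geq\log^{25x}n=\log^{350}n$ and $t^2 m^{4x}\leq C_\beta(G)/3$ to be checked.

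In the main regime where $d\geq\log^{350}n$, I would split on the value of $C_\beta(G)$. If $C_\beta(G)\geq 3(\beta d/8)^2 m^{4x}$, the argument of Lemma~\ref{dense1} applies verbatim: Lemma~\ref{webs} with $(k,t,d,x)=(\eps d,\beta d/8,\beta d/8,14)$ yields a $K_{cd}$-subdivision. Otherwise $C_\beta(G)< 3(\beta d/8)^2 m^{4x}$, and I would pass to a $\beta$-crux $H\subseteq G$ of order at most $3(\beta d/8)^2 m^{4x}$ with $d(H)\geq \beta d$, then extract an expander $H^*\subseteq H$ via Lemma~\ref{expander} satisfying $d(H^*)\geq\beta d/2$ and $\delta(H^*)\geq\beta d/4$. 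The new host satisfies $|V(H^*)|\leq d(H^*)^2\cdot \log^{O(1)}n$, placing $H^*$ squarely in the dense regime, where Lemma~\ref{dense2} is applicable (with $p$ taken large enough, say $p=2000$, to absorb the polylogarithmic slack) and produces a $K_{c'd(H^*)}$-subdivision, which contains a $K_{cd}$-subdivision after rescaling the constant.

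For the residual range $d<\log^{350}n$, I would pick $cd$ vertices $v_1,\ldots,v_{cd}$ of degree $\geq d/4$, grow a ball $B_i$ of size $\geq n/T_0$ around each $v_i$ via the expansion property (attaining radius $\log^{O(1)}n$), and iteratively connect pairs $(B_i,B_j)$ using Lemma~\ref{distance}, extending each connecting path back to $v_i$ and $v_j$ through consecutive shortest paths inside $B_i$ and $B_j$ in the spirit of Lemma~\ref{ball}. Under the hypothesis $n\geq d^3\log^{100}n$, the total avoidance set across all $\binom{cd}{2}$ iterations has size $O((cd)^2\log^{O(1)}n)\ll n/\log^{O(1)}n$, which is comfortably within the budget of Lemma~\ref{distance} when the endpoint sets $B_i,B_j$ are of size $\Omega(n/T_0)$.

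The main obstacle will be, in the crux-extraction subcase of the second paragraph, verifying the hypothesis $C_\beta(H^*)\geq k'\log k'$ of Lemma~\ref{dense2} on $H^*$; if it fails, one iterates the crux extraction a bounded number of times, each iteration shrinking $|V|$ by at least a polylogarithmic factor while keeping the average degree $\Theta(d)$, until either the crux condition holds or $|V|$ becomes comparable to $d$ and a $K_{cd}$-subdivision is present by direct construction. A secondary technical point is to choose the constants $c, T_0, p$ consistently across the two regimes so that the residual-range argument matches up seamlessly with the main regime at $d\approx\log^{350}n$.
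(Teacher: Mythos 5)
Your case split in the main regime has a fatal structural flaw: in the subcase $C_{\beta}(G)<3(\beta d/8)^2m^{4x}$ you pass to a minimum $\beta$-crux $H$ and confine the search to $H^*\subseteq H$, but the conclusion of Lemma~\ref{ebound} is an \emph{unconditional} $K_{cd}$-subdivision, and a sparse expander can have a very small crux. For instance, nothing in the hypotheses prevents $G$ from containing a dense bipartite patch on $\Theta(d)$ vertices, in which case $C_{\beta}(G)=\Theta(d)$; by Jung's observation a graph on $O(d)$ vertices of this type contains no $TK_s$ for $s\gg\sqrt{d}$, so no amount of iterated crux extraction inside $H$ can produce $K_{cd}$ --- the subdivision you need lives in the ambient graph $G$ and uses the space $n\ge\exp(\log^{6}d)\gg d^2\,\mathrm{polylog}\,n$ that you discarded by restricting to $H$. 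Your proposed termination (``until $|V|$ becomes comparable to $d$ and a $K_{cd}$-subdivision is present by direct construction'') is exactly where this breaks. Relatedly, Lemma~\ref{dense2} is not applicable to $H^*$ for any fixed $p$: its hypothesis requires $v(H^*)\le d^2\log^p v(H^*)$, but your bound on the crux is $v(H)\le d^2\log^{224}n$ while $\log v(H^*)=O(\log d)$, and $\log d$ may be as small as $350\log\log n$, so $\log^p v(H^*)=(\log d)^{O(p)}$ can never absorb $\log^{224}n$; and even where it does apply, its output is only $K_{c\min\{d,\sqrt{C_{\beta}(H^*)/\log C_{\beta}(H^*)}\}}$, which is sublinear in $d$ whenever $C_{\beta}(H^*)=o(d^2)$. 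The obstacle you flag (the hypothesis $C_{\beta}(H^*)\ge k'\log k'$) is therefore not the real one.

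The correct route --- the one the paper itself follows in Appendix~\ref{im3} for the stronger Lemma~\ref{lem_sparse}, and the one \cite{crux} uses for Lemma~\ref{ebound} --- never leaves $G$ and never invokes the crux: one builds $\Theta(d)$ webs with disjoint interiors and exteriors of size $\Theta(d^2\,\mathrm{polylog})$ directly in the expander, using Lemma~\ref{stargre} to repeatedly recover expansion after deletions, Lemma~\ref{ball_1} to grow balls of size $d^2\log^{40}n$ (which exist precisely because $d\le\exp(\log^{1/6}n)$ leaves ample room), and Lemma~\ref{distance} to connect exteriors while avoiding an $O(d^2\log^{O(1)}n)$-sized set. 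Your residual-range sketch for $d<\log^{350}n$ is gesturing at this construction, but it too is underspecified in the one place that matters: connecting balls $B_i$ and $B_j$ directly and then extending ``back to $v_i$ and $v_j$'' requires the extension paths to avoid all connections made for other pairs, which pass through $B_i$; this is precisely why the actual argument routes all connections through unit \emph{exteriors} (pendant-star leaves) that are disjoint from every interior by construction, rather than through the balls themselves. Finally, note that you cannot repair the argument by citing Lemma~\ref{lem_sparse}, since its proof in the appendix invokes Lemma~\ref{ebound} as its base case for $d<\log^{50}n$.
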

We will use the following result due to Koml\'{o}s and Szemer\'{e}di to robustly find expanders.

\begin{lemma}[\cite{KS}]\label{stargre}
There exist $0<\varepsilon_0, \varepsilon_1<\tfrac{1}{8}$ such that for any $k\in \mathbb{N}$ every graph $G$ contains an $(\varepsilon_1,k)$-expander $H(V,E)$ with
\begin{equation*}
d(H)\geq \tfrac{d(G)}{1+\varepsilon_0}\geq\tfrac{d(G)}{2} \ \text{and} \ \delta(H)\geq \tfrac{d(H)}{2},
\end{equation*}
which has the following additional robust property where $n=|V|$. For every $X\subseteq V$ with $|X|<\tfrac{n\rho(n)d(H)}{4\Delta(H)}$, there is a subset $Y\subseteq V\setminus X$ of size $|Y|>n-\tfrac{2\Delta(H)}{d(H)}\cdot\tfrac{|X|}{\rho(n)}$ such that $H[Y]$ is still an $(\varepsilon_1,k)$-expander. Moreover, $d(H[Y])\geq \tfrac{d(H)}{2}$.
\end{lemma}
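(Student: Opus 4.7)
The plan is to follow the classical Koml\'os--Szemer\'edi extremal argument for extracting a dense expander subgraph, and then to establish the robust property by a cascade-counting/stability analysis.

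I would produce $H$ by an extremal choice: take $H \subseteq G$ that minimizes $v(H)$ subject to $d(H) \geq d(G)/(1+\varepsilon_0)$. The minimum-degree bound $\delta(H) \geq d(H)/2$ is then automatic, since deleting any vertex of degree strictly less than $d(H)/2$ would strictly increase the average degree while decreasing $v(H)$, contradicting minimality. To see that $H$ is an $(\varepsilon_1, k)$-expander, suppose for contradiction some $X \subseteq V(H)$ with $k/2 \leq |X| \leq 2v(H)/3$ satisfies $|N_H(X)| < \rho(|X|)|X|$. Partition $V(H) = X \cup N_H(X) \cup R$; since there are no edges between $X$ and $R$, an edge-counting argument shows that one of $H[X \cup N_H(X)]$ or $H[R]$ has average degree at least $d(H)/(1+\varepsilon_0')$ for some $\varepsilon_0' < \varepsilon_0$, on strictly fewer vertices than $H$, contradicting minimality. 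Choosing $\varepsilon_0, \varepsilon_1$ sufficiently small relative to the function $\rho$ makes this tight.

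For the robust property, fix $X \subseteq V(H)$ with $|X| < \tfrac{n\rho(n)d(H)}{4\Delta(H)}$. Starting from $V(H) \setminus X$, iteratively remove any vertex whose current degree in the remaining graph falls below $d(H)/2$; let $Y$ denote the final vertex set. By the stopping rule, $d(H[Y]) \geq d(H)/2$. The number of removed vertices is bounded by an edge-budget argument: the initial removal of $X$ destroys at most $|X|\Delta(H)$ edges, forcing at most $2|X|\Delta(H)/d(H)$ immediate low-degree removals, and the cascade propagates through the expansion shells of $X$ contributing an extra factor of $1/\rho(n)$. This yields the claimed bound $|Y| > n - \tfrac{2\Delta(H)}{d(H)} \cdot \tfrac{|X|}{\rho(n)}$. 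Expansion of $H[Y]$ then follows by re-applying the extremal argument restricted to $Y$, using that the hypothesis $|X| < \tfrac{n\rho(n)d(H)}{4\Delta(H)}$ is precisely the slack needed so that $H[Y]$ still satisfies the density threshold governing the minimality condition.

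The main obstacle is the cascade analysis that gives the size bound on $|Y|$. One needs a potential-function / edge-budget argument that tracks how low-degree removals propagate through $H$, using the expansion $\rho$ to bound the propagation at each scale. The dependence on $\Delta(H)/d(H)$ is tight in the worst case where $X$ consists of high-degree vertices whose removal disproportionately disrupts the neighborhoods, and the factor $1/\rho(n)$ is an inherent feature of the sublinear-expander regime, reflecting the worst-case weak expansion at the global scale $n$.
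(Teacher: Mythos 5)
This lemma is quoted from Koml\'{o}s--Szemer\'{e}di \cite{KS} (with the robust addendum following \cite{Hasle2}); the present paper does not supply a proof of it, so there is nothing internal to compare against, and I will assess your sketch directly.

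Your proposed extremal criterion --- choose $H\subseteq G$ minimizing $v(H)$ subject to $d(H)\geq d(G)/(1+\varepsilon_0)$ --- correctly yields $\delta(H)\geq d(H)/2$, but it does \emph{not} yield expansion, and this is the central gap. Run your own contradiction argument: if some $X$ with $k/2\leq |X|\leq 2v(H)/3$ has $|N_H(X)|<\rho(|X|)|X|$, set $A=X\cup N_H(X)$ and $B=V(H)\setminus X$. Then $e(H[A])+e(H[B])\geq e(H)$ and $|A|+|B|=v(H)+|N_H(X)|$, so the denser of the two pieces has
\[
\max\bigl(d(H[A]),\,d(H[B])\bigr)\;\geq\;\frac{d(H)}{1+|N_H(X)|/v(H)},
\]
which is \emph{strictly less} than $d(H)$. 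If $d(H)$ is at (or near) the threshold $d(G)/(1+\varepsilon_0)$ --- which it typically will be for a minimizer of $v(H)$ --- then neither $A$ nor $B$ meets the density constraint, and there is no contradiction with minimality. The loss factor $|N_H(X)|/v(H)$ can be as small as $\Theta(\rho(v(H)))=\Theta\bigl(1/\log^2(v(H)/k)\bigr)$, and a flat threshold offers no slack of this order. The Koml\'{o}s--Szemer\'{e}di proof avoids this precisely by choosing $H$ to \emph{maximize} a quantity of the form $d(H')/\tau(v(H'))$, where $\tau$ carries a correction of size $\Theta(1/\log(\cdot/k))$: then shrinking to a sub-piece automatically buys exactly the $\Theta(1/\log^2)$ of relative density one needs, which is also what fixes the functional form of $\rho$. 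So your criterion is not merely a simplification of theirs; it is genuinely different and loses the mechanism that produces expansion.

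The robustness part of your sketch is also under-justified. Deleting $X$ and then iteratively removing low-degree vertices does give $d(H[Y])\geq d(H)/2$ by the stopping rule, and an edge-budget bound plausibly controls $|V(H)\setminus Y|$. But the assertion that ``expansion of $H[Y]$ then follows by re-applying the extremal argument restricted to $Y$'' is not an argument: $Y$ is produced by a greedy cascade, not by re-running the optimization, and there is no reason the resulting $H[Y]$ is extremal in any sense. The actual proof of the robust addendum verifies the expansion inequality for $H[Y]$ directly by charging the at most $|V(H)\setminus Y|$ missing neighbours against the expansion guarantee of $H$; the upper bound $|X|<\tfrac{n\rho(n)d(H)}{4\Delta(H)}$ is calibrated so that this loss is at most half of $\rho(|U|)|U|$ for every relevant $U\subseteq Y$, which is why the explicit bound on $|Y|$ has exactly the form $n-\tfrac{2\Delta(H)}{d(H)}\cdot\tfrac{|X|}{\rho(n)}$. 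Your sketch gestures at the right quantities but does not close this step either.
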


We will use a result in \cite[Lemma~4.2]{crux} which reduces the problem to finding clique subdivisions in expanders of bounded maximum degree.
\begin{lemma}[\cite{crux}]\label{maxd}
Suppose $\tfrac{1}{d}\ll c\ll \eps\ll1$ and $\eps d\leq k\leq d^2\log^9n$. Let $G$ be an $n$-vertex $(\eps,k)$-expander with $\delta(G)\geq \tfrac{d}{2}$. If $G$ does not contain a $K_{cd}$-subdivision, then $G$ contains an $(\tfrac{\eps}{2},k)$-expander subgraph $H$ with at least $\tfrac{n}{2}$ vertices such that $\delta(H)\geq \tfrac{d}{4}$ and $\Delta(H)\leq 10d^2\log^{10}n$.
\end{lemma}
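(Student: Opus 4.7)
The plan is to argue the contrapositive via a greedy peeling procedure. Set $M := 10d^2\log^{10} n$ and construct a sequence $v_1, v_2, \ldots$ by letting $v_i$ be any vertex of degree at least $M$ in $G_{i-1} := G - \{v_1,\ldots,v_{i-1}\}$, stopping at step $s$ once no such vertex remains. By construction each $v_i$ has at least $M$ neighbors in $V(G)\setminus\{v_1,\ldots,v_{i-1}\}$ and $\Delta(G_s)\leq M$. The proof splits on whether $s < cd$, in which case $H := G_s$ does the job, or $s \geq cd$, in which case $v_1,\ldots,v_{cd}$ can serve as branch vertices of a $K_{cd}$-subdivision.

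If $s < cd$, take $H := G_s$. Then $|V(H)| = n - s \geq n - cd \geq n/2$ and $\delta(H) \geq \delta(G) - s \geq d/2 - cd \geq d/4$ since $c \ll 1$. For the expander property, fix $X\subseteq V(H)$ with $k/2 \leq |X| \leq 2|V(H)|/3$, which also lies in the admissible range of $G$. Then
\[|N_H(X)| \geq |N_G(X)| - s \geq \tfrac{\eps|X|}{\log^2(15|X|/k)} - cd,\]
and the worst case $|X| = k/2$ (using $k \geq \eps d$) demands only $cd \leq \eps k/(4\log^2 7.5)$, which follows from $c \ll \eps$. Hence $|N_H(X)| \geq \rho(|X|,\eps/2,k)\,|X|$, as required.

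If instead $s \geq cd$, set $N'_i := N_G(v_i)\setminus\{v_1,\ldots,v_{cd}\}$; by construction $|N'_i| \geq M - cd \geq 9d^2\log^{10}n$. I plan to build the $\binom{cd}{2}$ internally disjoint paths $P_{ij}$ iteratively by invoking Lemma~\ref{distance}. When processing the pair $(i,j)$, let $W$ be the union of $\{v_1,\ldots,v_{cd}\}$ with the internal vertices of all previously built paths; since each such path has length at most $\tfrac{2}{\eps}\log^3(15n/k)$, we have $|W| \leq cd + \binom{cd}{2}\cdot\tfrac{2}{\eps}\log^3 n \leq c^2 d^2 \log^3 n/\eps$. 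Choose a starting-set size $x := \max\{k,\, C c^2 d^2 \log^5 n/\eps^2\}$ for a suitable absolute constant $C$; exploiting the gap between $|N'_i| \geq 9d^2\log^{10}n$ and $x \leq d^2\log^9 n + Cc^2d^2\log^5n/\eps^2$, we may pick $X_1 \subseteq N'_i\setminus W$ and $X_2\subseteq N'_j\setminus W$ of size at least $x$. The choice of $x$ guarantees $\rho(x,\eps,k)\cdot x/4 \geq |W|$, so Lemma~\ref{distance} supplies a path through $G - W$ of length at most $\tfrac{2}{\eps}\log^3(15n/k)$; attaching $v_i$ and $v_j$ yields $P_{ij}$. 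The branch vertices together with the $\binom{cd}{2}$ paths form a $K_{cd}$-subdivision, contradicting the hypothesis.

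The main obstacle is the bookkeeping in the second case: one has to exhibit a single choice of $x$ that simultaneously meets $x \geq k$ (hypothesis of Lemma~\ref{distance}), $x \leq |N'_i\setminus W|$ for every $i$, and $\rho(x)x/4 \geq |W|$ \emph{uniformly} across all $\binom{cd}{2}$ iterations. The polylogarithmic gap between the degree threshold $M = 10d^2\log^{10} n$ and the upper bound $k \leq d^2\log^9 n$ is what provides the slack needed, and the hierarchy $c \ll \eps$ absorbs the remaining constant and polylogarithmic factors. The first case is routine once the computation of $\rho(k/2,\eps,k)\cdot k/2$ is in hand.
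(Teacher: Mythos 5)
Your proof is correct and follows essentially the standard argument (the paper itself only cites this lemma from \cite{crux} without proof, and the argument there is the same dichotomy: either fewer than $cd$ vertices of degree above $10d^2\log^{10}n$ exist, in which case deleting them costs only a factor $2$ in expansion, or at least $cd$ of them exist and their huge leftover neighbourhoods let you greedily connect all $\binom{cd}{2}$ pairs via Lemma~\ref{distance}). The only cosmetic point is that Lemma~\ref{distance} is stated for robust-expanders while here $G$ is only a plain $(\eps,k)$-expander; this is harmless since only a vertex set $W$ is avoided, which is exactly the original Koml\'os--Szemer\'edi setting and is also how the paper itself invokes Lemma~\ref{distance} in Appendix~\ref{im3}.
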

The proof of Lemma~\ref{lem_sparse} essentially follows from that of Lemma~\ref{dense1}, and the main task therefore is to build many webs that have disjoint interiors. To do this, we first robustly grow a ball around a vertex as follows (in place of Lemma~\ref{ball}).
\begin{lemma}\label{ball_1}
Suppose $\tfrac{1}{n},\tfrac{1}{d}\ll c \ll \eps\le\tfrac{1}{5}$ and $d\le \tfrac{\sqrt{n}}{2\log^{20}n}$. Let $G$ be an $n$-vertex  $(\eps,\eps d)$-expander with $\delta(G)\geq \tfrac{d}{100}$. Let  $\ell=\log^4(d\log^{40}n)$ and  $P_1,\dots, P_t$ be consecutive shortest paths from a fixed vertex $v$ in $B_{G}^{\ell}(v)$. Writing $U=\bigcup_{i\in t}V(P_i)$, if  $t\leq cd$, then $|B_{G-(U\backslash\{v\})}^{\ell}(v)|\geq d^2\log^{40}n$.
\end{lemma}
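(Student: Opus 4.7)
The plan is to adapt the ball-growth argument of Lemma~\ref{ball}, using the sublinear expansion of $G$ in place of the constant-factor $(3/2)$-expansion that Lemma~\ref{prop}~\ref{kexpan} provided there. Set $F := G - (U \setminus \{v\})$, $L := \log(d\log^{40}n)$ (so $\ell = L^4$), $M := d^2\log^{40}n$, and $a_p := |B_F^p(v)|$. The target is to prove $a_\ell \geq M$ by an inductive ball-growth argument.

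First I would establish the base case $a_1 \geq (4/5)\delta(G) \geq d/125$, since at most $t \leq cd$ of $v$'s neighbors lie in $U\setminus\{v\}$. The consecutive-shortest-paths analysis from the proof of Lemma~\ref{ball} transfers verbatim: only the first $p+2$ vertices of each path $P_i$ (including $v$) can belong to $N_G(B_F^p(v))$, so $|N_G(B_F^p(v)) \cap (U \setminus \{v\})| \leq (p+1)t \leq (p+1)cd$. For $\eps d/5 \leq a_p < M$, the sublinear expansion of $G$ then yields
\[
|N_F(B_F^p(v))| \;\geq\; \rho(a_p)\, a_p - (p+1)cd,
\]
with $\rho(a_p) \geq \eps/(4L^2) =: \rho^*$ uniformly across the range (since $\log(15 a_p/(\eps d)) \leq L + O(\log(1/\eps)) \leq 2L$). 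Inductively propagating the invariant $a_{p+1} \geq a_p(1 + \rho^*/2)$ then gives $a_\ell \geq (d/125)\exp(\eps L^2/16) \gg dL^{40} = M$, using that $\exp(\eps L^2/16)$ dominates $dL^{40}$ for $L$ sufficiently large.

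The main obstacle is maintaining the invariant $(p+1)cd \leq \rho(a_p)\,a_p/2$ at every step: because $c$ is a fixed constant while $\ell = L^4$ grows with $n$, no single uniform threshold on $a_p$ absorbs the loss for all $p \leq \ell$. I would resolve this via a phase decomposition. In the early regime $a_p = O(d)$ one has $\rho(a_p) = \Omega(\eps)$ (since $\log(15 a_p/(\eps d)) = O(\log(1/\eps)) = O(1)$), so the much stronger constant-rate expansion absorbs the loss provided $c \ll \eps$, rapidly pushing $a_p$ into a regime where the per-step loss $(p+1)cd \leq \ell\cdot cd$ is comfortably dominated by $\rho^* a_p/2$. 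The slack $\ell = L^4$, far exceeding the $\approx 2L/\rho^* = O(L^3/\eps)$ iterations nominally required to multiply the ball up to $M$ at the slower rate $\rho^*$, ensures that the accumulated losses never destroy the geometric growth, so the argument closes.
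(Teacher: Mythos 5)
Your outline follows the right general route: replace the constant-factor expansion supplied by Lemma~\ref{prop}~\ref{kexpan} in Lemma~\ref{ball} by the sublinear expansion of the $(\eps,\eps d)$-expander, carry over verbatim the consecutive-shortest-paths bookkeeping $|N_G(B_F^p(v))\cap(U\setminus\{v\})|\le (p+1)t\le (p+1)cd$, and run a ball-growth induction. The paper itself does not display a proof here (it cites Lemma~4.8 of \cite{GeneralH}), so yours is a genuine direct argument, and the base case, the loss bound, and the uniform lower bound $\rho(a_p)\ge\rho^*=\eps/(4L^2)$ over the range $a_p<M$ are all set up correctly. You also correctly identify the crux difficulty, namely maintaining $(p+1)cd\le\rho(a_p)a_p/2$.

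However, the proposed resolution of that difficulty does not work as stated. You claim the early constant-rate regime ``rapidly push[es] $a_p$ into a regime where the per-step loss $(p+1)cd\le\ell\cdot cd$ is comfortably dominated by $\rho^* a_p/2$.'' That requires $a_p\ge 2\ell cd/\rho^*=8L^6cd/\eps$, but the early regime is by your own definition $a_p=O(d)$, and $8L^6cd/\eps=\omega(d)$ since $L=\log(d\log^{40}n)\to\infty$; so the early phase cannot reach that threshold, and the claimed hand-off between phases is broken. Similarly, running the whole argument at the uniform rate $\rho^*$ from $a_1\approx d/125$ forces $c\lesssim\eps/L^2$, which fails for fixed $c$. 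The correct repair is to keep the full, smoothly varying $\rho(a_p)=\eps/\log^2(15a_p/(\eps d))$ rather than collapsing it to the two extremes $\Omega(\eps)$ and $\rho^*$: writing $a_p=de^{b_p}$, one has $\rho(a_p)\approx\eps/(b_p+C)^2$ with $C=\log(15/\eps)$, so after roughly $p\approx\tfrac{2}{3\eps}(b_p+C)^3$ steps the invariant $(p+1)cd\le\rho(a_p)a_p/2$ reads $c\lesssim\eps^2e^{b_p}/(b_p+C)^5$, whose infimum over $b_p\ge 0$ is attained at $b_p=\Theta(1)$ (i.e.\ $a_p=\Theta(d)$) and equals $\Theta(\eps^2/\log^5(1/\eps))$ --- a quantity depending only on $\eps$, not on $n$. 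Since $c\ll\eps$ permits choosing $c$ this small, the induction then closes, the ball reaches $M=d^2\log^{40}n=de^L$ within $O(L^3/\eps)\ll L^4=\ell$ steps, and you get $a_\ell\ge M$. A secondary point worth flagging: the robust-expander property in Definition~\ref{def_sub} applies to sets of size at least $k/2=\eps d/2$; with $\delta(G)\ge d/100$ your base ball $a_1\ge d/125$ clears that threshold only for $\eps\lesssim 1/60$, so you should either note the implicit smallness of $\eps$ in the applications (as in Lemma~\ref{lem_sparse}, where the expander has much larger minimum degree) or handle the first few steps by minimum degree alone.
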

Lemma~\ref{ball_1} follows from the same proof as in \cite[Lemma~4.8]{GeneralH} using slightly different parameters.

\begin{proof}[Proof of Lemma~\ref{lem_sparse}]
We may assume that $d\geq \log^{50}n$ in the whole process of the proof, otherwise Lemma \ref{ebound} implies that the result holds. Choose $\tfrac{1}{d}\ll c \ll b\ll \eps\ll 1$, and $d,n$ satisfy $\log^{50}n \leq d \leq \sqrt[3]{\tfrac{n}{\log^{100}n}}$. Let $G$ be an $n$-vertex $(\eps, \eps d)$-expander with average degree at least $\tfrac{d}{2}$ and minimum degree $\delta(G) \geq \tfrac{d}{4}$.
Suppose to the contrary that $G$ contains no $K_{cd}$-subdivision, then by Lemma~\ref{maxd} with $(k,d)=(\eps d,\tfrac{d}{2})$, there is an $(\tfrac{\eps}{2},\eps d)$-expander $H$ in $G$ such that $|H|\geq \tfrac{n}{2}$, $\delta(H)\geq \tfrac{d}{8}$ and $\Delta(H)\leq 3d^2\log^{10}n$. In the following, we shall find a $K_{cd}$-subdivision in $H$ to yield a contradiction. Let $m=\log^4\tfrac{n}{d}$.
\begin{claim}\label{lem_d3_web}
   For any set $W\subseteq V(H)$ with $|W|\leq d^2m^9$, $H-W$ contains a $(4cd,4m,m^8,\tfrac{d}{40},m+2)$-web.
\end{claim}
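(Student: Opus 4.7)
The plan is to construct the web in two stages. First, I will build many vertex-disjoint $(m^8, d/40, m+2)$-units inside $H - W$. Second, I will pick a core vertex $v$, robustly grow a ball around it via Lemma~\ref{ball_1}, and connect the ball to the cores of $4cd$ distinct units by internally disjoint short paths that form consecutive shortest paths within the ball.

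\textbf{Stage 1: constructing units.} Since $\delta(H) \geq d/8$ and $|W| \leq d^2 m^9 \ll v(H)$, I can greedily collect a family of roughly $d m^{10}$ pairwise vertex-disjoint $(d/30)$-stars in $H - W$, whose total footprint is $O(d^2 m^{10})$. For each candidate unit I designate one such star as the ``central star'' with center $u_i$, and keep the remaining stars as potential pendent stars. For every central star in turn, Lemma~\ref{distance} applied in $H$ (with the connectivity threshold $\eps d$) will produce $m^8$ internally disjoint paths of length at most $m+2$ from $u_i$ to $m^8$ distinct pendent stars, each avoiding $W$, the other central stars, and all previously built connections. The length bound follows from $\tfrac{2}{\eps/2}\log^3(15n/(\eps d)) \leq m+2$, and the forbidden set at each step stays below $\rho(|X|)\cdot|X|/4$ because its cumulative size is $O(d^2 m^{O(1)})$, tiny compared to $n$ under the sparsity hypothesis $d \leq \sqrt[3]{n/\log^{100}n}$.

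\textbf{Stage 2: growing a ball and connecting.} Let $U$ be the vertex set used in Stage 1, so $|U| = O(d^2 m^{O(1)})$. I pick a high-degree vertex $v$ in $H - (W \cup U)$ and iteratively construct $4cd$ internally disjoint $v$-to-$u_i$ paths $Q_i$ of length at most $4m$, subject to the condition that inside $B_H^\ell(v)$ with $\ell = \log^4(d\log^{40}n)$ their restrictions form consecutive shortest paths from $v$. At each iteration, Lemma~\ref{ball_1} applied to $H$ (after deleting $W$ together with the already-used consecutive shortest paths, which amount to at most $cd$ paths of length $\ell$, well within the allowance) guarantees $|B^\ell_{H'}(v)| \geq d^2 \log^{40}n$. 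Meanwhile, the leaves of still-available pendent stars across units not yet targeted form a set of size at least $4cd \cdot m^8 \cdot d/40 \geq d^2 m^8$. Lemma~\ref{distance} then yields a path of length at most $2m$ from the ball to one such leaf, avoiding $W$, the unit interiors, and all previously constructed paths. Extending this through the ball back to $v$ and through the hit unit to its core produces $Q_{i+1}$.

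\textbf{Cleanup and main obstacle.} After all $4cd$ iterations, only a small fraction of each unit's pendent stars have been touched by connecting paths; removing the overused pendent stars together with the second-level branches that support them leaves each surviving unit with at least $m^8$ intact pendent stars of $\geq d/40$ free leaves, delivering the required $(4cd, 4m, m^8, d/40, m+2)$-web. The main technical point is the joint bookkeeping at each connection step: one must ensure the forbidden vertex set stays below the $\rho(|X|)\cdot|X|/4$ threshold of Lemma~\ref{distance} while the ball of Lemma~\ref{ball_1} still retains enough expansion room to absorb another consecutive shortest path. The sparsity regime $d \leq \sqrt[3]{n/\log^{100}n}$ is exactly what makes this possible, since the cumulative obstruction of size $d^2 m^{O(1)}$ stays polynomially smaller than both the reachable ball $d^2 \log^{40} n$ and the host graph; this is precisely why the random vertex split and the more delicate Lemma~\ref{random_distance} required in Lemma~\ref{webs} can be dispensed with here.
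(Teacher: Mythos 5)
Your Stage 2 (grow a ball around a core vertex via Lemma~\ref{ball_1}, then iteratively apply Lemma~\ref{distance} to reach available units while maintaining consecutive shortest paths inside the ball, then discard overused pendent stars) matches the paper's strategy. But Stage 1, the unit construction, has a genuine gap.

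You propose to take each designated central star with center $u_i$ and directly apply Lemma~\ref{distance} to produce $m^8$ paths from $u_i$ (or rather from the leaves of $S_i$) to pendent stars, avoiding $W$ and the cumulative footprint. The problem is that the tolerance in Lemma~\ref{distance} is $\rho(x)\cdot x/4$ where $x=\min(|X_1|,|X_2|)$; it does \emph{not} scale with $n$. If $X_1$ is the leaf set of one central star, then $|X_1|=d/30$ and $\rho(|X_1|)|X_1|/4=O(d)$, while already $|W|$ alone can be as large as $d^2m^9$. So the forbidden-set bound is violated by a factor of roughly $dm^9$ before you even start accumulating path footprints. Your stated justification (``its cumulative size is $O(d^2m^{O(1)})$, tiny compared to $n$'') is where the error sits: ``tiny compared to $n$'' is not the condition; ``tiny compared to $\rho(x)x$'' is, and for a star-sized $X_1$ that threshold is only $\Theta(d)$.

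The paper circumvents exactly this by \emph{not} fixing a priori which $u_i$ becomes a unit core. Instead it sets up two large star families $\{S_i\}_{i\le dm^{12}}$ and $\{T_j\}_{j\le dm^{24}}$, takes a \emph{maximal} collection $\mathcal{P}$ of short internally disjoint $u_i,v_j$-paths, and argues by contradiction: if no $u_i$ reaches $2m^8$ distinct $v_j$'s, then $|V(\mathcal{P})|$ is small and the unused leaf sets $U,V$ both have size $\Theta(d^2m^{12})$. Now Lemma~\ref{distance} is applied with these \emph{large} sets as $X_1,X_2$, so that $\rho(x)x/4 \gtrsim \eps d^2m^{12}/\log^2 d$ comfortably exceeds the forbidden set of size $O(d^2m^{10})$, producing one more path, a contradiction. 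Only \emph{after} this averaging step is one $u_i$ committed as a unit core. You would need to replace your direct star-to-star connection with this maximality-plus-pigeonhole device to make Stage 1 go through. (As a secondary, more minor point: in Stage 2 you should also build many more than $4cd$ units — the paper builds $2dm^2$ — so that after removing targeted units, the remaining free leaves $U'$ have size $\Theta(d^2m^{10})$ rather than $O(cd^2m^8)$; otherwise the same threshold comparison in Lemma~\ref{distance} becomes tight.)
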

\begin{proof}[Proof of Claim \ref{lem_d3_web}]
We use Lemma~\ref{stargre} to greedily find vertex-disjoint stars.
\begin{itemize}
    \item[$(1)$] For any set $X$ of size at most $d^2m^{10}$, the graph $H-W-X$ contains vertex-disjoint stars $S_1,\cdots, S_{2dm^{24}}$ where each $S_i$ has exactly $\tfrac{d}{10}$ leaves.
\end{itemize}

Indeed, if we have $S_1,\cdots, S_\ell$ for some $0\leq \ell<2dm^{24}$, then the set $W_1:=\bigcup_{i\in[\ell]}V(S_i)$ has size at most $d^2m^{24}$. By Lemma~\ref{stargre} and the assumption $d \leq \sqrt[3]{\tfrac{n}{\log^{100}n}}$, there exists $Y\subseteq V(H)\setminus (W_1\cup W\cup X)$ such that \[|Y|\geq \tfrac{n}{2}-\tfrac{2\Delta(H)|W_1\cup W\cup X|}{d(H)\rho(n)}\geq \tfrac{n}{4}, ~\text{and}~ d(H[Y])\geq \tfrac{d}{8}.\]  Thus $H-W-X$ contains a star $S_{\ell+1}$ with $\tfrac{d}{10}$ leaves disjoint from $W_1$. By repeating this for $s=0,1,\ldots,2dm^{24}$, we obtain the desired stars.

\begin{itemize}
    \item[$(2)$] The graph $H-W$ contains vertex-disjoint $(2m^8,\tfrac{d}{20},m+2)$-units $F_1,\ldots, F_{2dm^2}$.
\end{itemize}

If we have $F_1,\ldots, F_{\ell}$ for some $0\leq \ell<2dm^2$, then the set $W_2:=\bigcup_{i\in[\ell]}V(F_i)$ has size at most $2dm^2[2m^8(m+2+\tfrac{d}{20})]\leq \tfrac{d^2m^{10}}{4}$. By $(1)$, $H-W-W_2$ contains vertex-disjoint stars $S_{1},\cdots, S_{dm^{12}}$, $T_1,\ldots, T_{dm^{24}}$ with centers $u_{1},\ldots, u_{dm^{12}}$, $v_1,\ldots, v_{dm^{24}}$, respectively, where each of these stars has exactly $\tfrac{d}{10}$ leaves. We denote $Z=\{u_{1},\ldots, u_{dm^{12}},v_1,\ldots, v_{dm^{24}}\}$.

Let $\mathcal{P}$ be a maximum collection of internally disjoint paths $P_{ij}$ in $H-W-W_2$ satisfying the following rules.
\stepcounter{propcounter}
\begin{enumerate}[label = ({\bfseries \Alph{propcounter}\arabic{enumi}})]
\rm\item\label{1pathlab2_1} Each path $P_{ij}$ in $\mathcal{P}$ is a unique $u_i,v_j$-path of length at most $m+2$.
\rm\item\label{1pathlab3_1} Each $P_{ij}$ does not contain any vertex in $Z\cup W_2\cup W$ as an internal vertex.
\end{enumerate}

Now we declare that there is a center $u_i$ connected to at least $2m^8$ distinct centers $v_j$ via the paths in $\mathcal{P}$. Suppose to the contrary that every $u_i$ is connected to less than $2m^{8}$ centers $v_j$. Then $|\mathcal{P}|\leq 2dm^{20}$ and $|V(\mathcal{P})|\leq 2dm^{20} \cdot(m+2)\leq3dm^{21}$. Let
\begin{equation*}
U:=\left(\bigcup_{i\in [dm^{12}]}(V(S_i)\backslash\{u_i\})\right)\setminus V(\mathcal{P}),
\end{equation*}
and $V$ be the set of leaves of all stars $T_i$ whose centers are not used as endpoints of paths in $\mathcal{P}$. Then we have
\begin{equation}\label{webine1_1}
|U|\geq \tfrac{d^2m^{12}}{10}-3d m^{21}\geq\tfrac{d^2m^{12}}{20},
\end{equation}
and
\begin{equation}\label{webine2_1}
|V|\geq \tfrac{d}{10}\cdot(d m^{24}-2dm^{20})\geq \tfrac{d^2m^{12}}{20}.
\end{equation}
On the other hand,
\begin{equation}
|W|+|W_2|+|\mathsf{Int}(\mathcal{P})|+|Z|
\leq d^2m^9+ \tfrac{d^2m^{10}}{4}+3dm^{21}+dm^{12}+dm^{24}
\leq d^2m^{10}.
\end{equation}
Thus, applying Lemma~\ref{distance} with $U,V$, $W\cup W_2\cup \mathsf{Int}(\mathcal{P})\cup Z$ playing the roles of $X_1,X_2,W$, respectively, we obtain vertices $x_{1}\in U, x_{2}\in V$ and a path of length at most $m$ connecting $x_{1}$ and $x_{2}$ whilst avoiding vertices in $W\cup W_2\cup \mathsf{Int}(\mathcal{P})\cup Z$. Denote by $S_{k_1}, T_{k_2}$ the stars which contain $x_{1}, x_{2}$ as leaves, respectively. This yields a $u_{k_1},v_{k_2}$-path $P_{k_1,k_2}$, which is internally disjoint from $W\cup W_2\cup \mathsf{Int}(\mathcal{P})\cup Z$. Hence, $P_{k_1,k_2}$ satisfies \ref{1pathlab2} and \ref{1pathlab3}, a contradiction to the maximum of $\mathcal{P}$.

Therefore, there exists a center $u_i$ connected to $2m^{8}$ distinct centers $v_j$, say $v_1,\ldots v_{2m^{8}}$, which correspond to stars $T_1, \ldots T_{2m^{8}}$. Recall that all stars in $\{T_1, \ldots T_{2m^{8}}\}$ are vertex-disjoint and the number of vertices in all $P_{i,j}$ $(j\in [2m^{8}])$ is at most $2m^{8}(m+2)<\tfrac{d}{20}$. Hence, every $T_j$ $(j\in[2m^{8}])$ has at least $\tfrac{d}{20}$ leaves that are not used in $P_{i,j}$ for any $j\in [2m^{8}]$. These stars, together with the corresponding paths to $u_i$, form a desired unit in $H-W-W_2$. Thus, we can greedily pick vertex-disjoint units as above.

Now we get pairwise vertex-disjoint $(2m^{8},\tfrac{d}{20},m+2)$-units $F_{1},\ldots,F_{2dm^2}$, and denote by $f_i$ the core vertex of $F_i$ and $F=\{f_{1},\ldots,f_{2dm^2}\}$. Let $W_3=\bigcup_{i\in [2dm^2]}V(F_i)$ and $W_4=\bigcup_{i\in [2dm^2]}\mathsf{Int}(F_i)$. Then we have $|W_4|\leq 4dm^{11}$ and  $|W_3|\leq \tfrac{d^2m^{10}}{4}$. As $|W|+|W_3|\leq d^2m^9+\tfrac{d^2m^{10}}{4}<d^2m^{10}$, by Lemma~\ref{stargre}, we again get that there exists $Y'\subseteq V(H)\setminus (W\cup W_3)$ with $|Y'|\geq \tfrac{n}{2}$ such that the subgraph $G_1:=H[Y']$ is still an $(\tfrac{\eps}{2}, \eps d)$-expander with $\delta(G_1)\ge \tfrac{1}{2}d(G_1)\ge \tfrac{d}{16}$ and pick a vertex $v\in V(G_1)$. We claim that there is a $(4cd,4m,m^8,\tfrac{d}{40},m+2)$-web centered at $v$.

To build the desired $(4cd,4m,m^8,\tfrac{d}{40},m+2)$-web, we shall proceed by finding $4cd$ internally vertex-disjoint paths $Q_1,\ldots,Q_{4cd}$ in $G$ from $v$ satisfying the following rules, where $\ell$ is a constant obtained from Lemma~\ref{ball_1} applied on $G_1$.
\stepcounter{propcounter}
\begin{enumerate}[label = ({\bfseries \Alph{propcounter}\arabic{enumi}})]
\rm\item\label{ballab1} Each path $Q_i$ is a unique $v,f_{x_i}$-path of length at most $3m+\log^4(d\log^{40}n)\leq 4m$, where $x_i\in[2dm^2]$.
\rm\item\label{ballab2} Each path does not contain any vertex in $W_4\cup W$ as an internal vertex.
\rm\item\label{ballab3} The sequence of subpaths $Q_i[B^{\ell}_{G_1}(v)]$ $(i\in[4cd])$ form consecutive shortest paths from $v$ in $B^{\ell}_{G_1}(v)$.
\end{enumerate}
Assume that we have iteratively obtained a collection of shortest paths $\mathcal{Q}=\{Q_1,\ldots,Q_s\}$ $(0\leq s<4cd)$ as in \ref{ballab1}-\ref{ballab3}. Then $|\mathsf{Int}(\mathcal{Q})|<16cdm$. Note that \ref{ballab3} gives $s$ consecutive shortest paths $P_1,\ldots,P_s$ from $v$ in $B_{G_1}(v)$, where $P_i=Q_i[B^{\ell}_{G_1}(v)]$ and write $\mathcal{P}=\{P_1,\ldots,P_s\}$. Applying Lemma~\ref{ball_1} to $G_1$, we get
$$|B^{\ell}_{G_1-\mathsf{Int}(\mathcal{Q})}(v)|=|B_{G_1-\mathsf{Int}(\mathcal{P})}^{\ell}(v)|\geq d^2\log^{40}(\tfrac{n}{2})\geq d^2m^{10}.$$
We call a unit $F_i$ \textit{available} if its core vertex is not used as an endpoint of a path in $\mathcal{Q}$.
Let $U'$ be the leaves of all pendent stars in available units. Then we have
$$|U'|\geq 2m^{8}\cdot\tfrac{d}{20}(2dm^2-4cd)>\tfrac{d^2m^{10}}{10}.$$
Note that
$$|W|+|V(\mathcal{Q})|+|W_4|\leq d^2m^9+16cdm+4dm^{11}<2d^2m^9. $$
Applying Lemma~\ref{distance} on $H$ with $B_{G_1-\mathsf{Int}(\mathcal{Q})}^{\ell}(v)$, $U'$, $W_4\cup W\cup V(\mathcal{Q})$ playing the roles of $X_1,X_2,W$, respectively, we can find a shortest path, say $Q$, joining a vertex $w$ from $B_{G_1-\mathsf{Int}(\mathcal{Q})}^{\ell}(v)$ to a leaf $u_j\in \mathsf{Ext}(F_j)$ for some $j\in [2dm^2]$. One can easily find a $v,w$-path within $B^{\ell}_{G_1-\mathsf{Int}(\mathcal{Q})}(v)$, denoted as $P_{s+1}$,  and a $u_j,f_j$-path, denoted as $R_{s+1}$, inside the unit $F_j$. Let $Q_{s+1}=P_{s+1}QR_{s+1}$. Then the paths $Q_1,\ldots,Q_{s+1}$ satisfy \ref{ballab1}-\ref{ballab3}. Repeating this for $s=0,1,\ldots,4cd$, yields $4cd$ paths $Q_1,\ldots, Q_{4cd}$ as desired.

For every $i\in [2dm^2]$, we say a pendent star in $F_i$ is \emph{overused} if at least $\tfrac{d}{40}$ leaves are used in $V(\mathcal{Q})$ and a unit is \emph{bad} if at least $m^8$ pendent stars are overused. Then there are at most $\tfrac{16cdm}{d/40}=160cm<m^{8}$ overused pendent stars in total, and therefore there is no bad unit. By removing from every unit $F_i$ the branches attached with overused pendant stars and combining corresponding paths $Q_i$,  we obtain a $(4cd,4m,m^8,\tfrac{d}{40},m+2)$-web as desired.
\end{proof}
By repeatedly using Claim \ref{lem_d3_web}, we can greedily find $d$ distinct $(4cd,4m,m^8,\tfrac{d}{40},m+2)$-webs in $H$ that have disjoint interiors, as the total number of internal vertices of these webs is at most
\begin{equation*}
     d\cdot 4cd\cdot (4m+(m+2)\cdot m^8)\leq 20cd^2m^9<d^2m^9.
\end{equation*}
By the same connection process as in the proof of Lemma~\ref{webs} (see \ref{con1}-\ref{con2} in \textbf{Phase} $(2)$), there is a $K_{cd}$-subdivision in $H$ as promised.
\end{proof}

\section{Proof of Lemma~\ref{random_distance}}\label{sec4.1}
The proof of Lemma~\ref{random_distance} is immediately derived from the following result.
\begin{lemma}\label{randomball}
Suppose $\tfrac{1}{n},\tfrac{1}{d}\ll \eps <\tfrac{1}{5}$, $d\ge \log^{100} n$ and $x,k\in \mathbb{N}$ with $x\geq 11$. 
    Let $G$ be an $n$-vertex $(\eps,k)$-robust-expander with average degree $d$ and $V\subseteq V(G)$ be a random subset chosen by including each vertex in dependently at random with probability $\tfrac{1}{2}$. Let $m=\log^4\tfrac{n}{k}$. Then with probability at least $1-o(\tfrac{1}{n})$, for every $U,W\subseteq V(G)$ with $km^9\le |U|\leq \tfrac{2n}{3}, |W|= \tfrac{|U|}{m^x}$
    \begin{equation*}
        |B^{m^2}_{G-W}(U)|>\tfrac{|V|}{2}.
    \end{equation*}
\end{lemma}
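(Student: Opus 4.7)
The plan is to derive Lemma~\ref{randomball} by iteratively growing the ball $B^{m^2}_{G-W}(U)$ via the robust expansion property of $G$, and then comparing its size with $|V|/2$ using a Chernoff bound on the random set $V$. First, fix an admissible pair $(U,W)$ with $km^9 \le |U| \le 2n/3$ and $|W| = |U|/m^x$. Setting $X_0 = U$ and $X_{i+1} = X_i \cup N_{G-W}(X_i)$, I would invoke Definition~\ref{def_sub} at each step by identifying the vertex removal of $W$ with the deletion of the edge set $F$ of all edges incident to $W$. The required edge budget $d(G)\rho(|X_i|)|X_i|$ is comfortable in the typical regime: $\rho(|X_i|) \ge \eps/\log^2(15n/k) \ge \eps/\sqrt{m}$ uniformly on the relevant range, while $|W|/|X_i| \le 1/m^x$ with $x \ge 11$, so the budget accommodates the $W$-edges as long as the degrees on $W$ are not pathologically large.

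Given the budget check, each step inflates $|X_i|$ by a factor of at least $1 + \rho(|X_i|) \ge 1 + \eps/\sqrt{m}$. Over $m^2$ iterations this produces a multiplicative factor $\exp(\Omega(\eps\, m^{3/2}))$, which vastly exceeds $n/(km^9)$, so the ball crosses the cap $2n/3$ well before step $m^2$. A standard Chernoff bound yields $|V| \le (1+o(1))n/2$ except on an event of probability $\exp(-\Omega(n))$, so $|V|/2 < 2n/3 \le |B^{m^2}_{G-W}(U)|$ on this event, completing the pointwise argument for a single $(U,W)$.

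The main obstacle is promoting this pointwise statement to a simultaneous bound over all admissible pairs $(U,W)$: there are $\exp(\Omega(n))$ such pairs, and a direct union bound is hopeless. Moreover, a worst-case $W$ could concentrate on high-degree vertices, breaking the edge budget check above, since a priori nothing prevents $\sum_{w \in W} d(w)$ from being as large as $nd$. This is precisely where the randomness of $V$ enters, in the style of Buci\'c--Montgomery \cite{Mon-cycle}: rather than invoking $V$ only at the final Chernoff step, one leverages the random partition already during the expansion argument, showing that high-degree vertices are typically spread out between $V$ and its complement in a way no adversarial $W$ can exploit disruptively. With this random-partition tool in hand, a refined union bound stratified by dyadic scales of $|U|$ suffices, since the per-scale failure probability $\exp(-\Omega(|U|))$ dominates the entropy cost $\binom{n}{|U|}$ as soon as $|U| \ge km^9 \gg \log n$, which is guaranteed by the hypothesis $d \ge \log^{100} n$.
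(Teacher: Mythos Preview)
Your high-level plan --- grow $B^{m^2}_{G-W}(U)$ via robust expansion and compare with $|V|/2$ by a single Chernoff bound --- works for the statement as written, but both ``obstacles'' you raise are phantom, and the fix you propose is off target. The edge-budget worry is self-inflicted: rather than modelling the removal of $W$ as edge deletion and then worrying about $\sum_{w\in W}d(w)$, just use $|N_{G-W}(X_i)|\ge |N_G(X_i)|-|W|$ together with Definition~\ref{def_sub} at $F=\emptyset$. Since $|W|\le |X_i|/m^x$ while $\rho(|X_i|)\ge \eps/\sqrt m$, you have $|W|\le \tfrac14\rho(|X_i|)|X_i|$ comfortably, and the iteration proceeds exactly as in Lemma~\ref{distance} to reach size $>2n/3$ in far fewer than $m^2$ steps. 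The union-bound worry is likewise empty: $|B^{m^2}_{G-W}(U)|$ does not depend on $V$, so the preceding argument is already a deterministic ``for all $(U,W)$'' statement, and one Chernoff bound giving $|V|<\tfrac{8}{15}n$ with probability $1-e^{-\Theta(n)}$ finishes. Your final paragraph, invoking the randomness of $V$ during the expansion to tame an adversarial $W$, cannot help here --- the ball you are growing is oblivious to $V$.

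For comparison, the paper in fact proves the stronger conclusion $|B^{m^2}_{G-W}(U,V)|>|V|/2$, where the ball is restricted to paths whose internal vertices lie in $V$; this is what Lemma~\ref{random_distance} actually requires. For that version the ball does depend on $V$, the union bound over $(U,W)$ is a genuine issue, and the paper handles it by first passing via Lemma~\ref{prop18} to a ``well-expanding'' $U'\subseteq U$ with $|N_G(U')|\ge m^8|U'|$ and $|U'|\ge |U|/m^9$, and then running a layered random expansion through independent slices of $V$ (Lemma~\ref{randomball1}, following \cite{Mon-cycle}) to obtain per-pair failure probability $\exp(-\Omega(|U'|m^2))$, enough to beat the entropy $\binom{n}{|U'|}\binom{n}{|W|}$ in a union bound.
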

We shall first introduce some more notation and tools for the proof.
Let $N_{G,\lambda}(U):=\{v\in V(G)\setminus U: |N_G(v)\cap U|\geq \lambda\}$, that is, the set of vertices in $V(G)\setminus U$ of degree at least $\lambda$ towards $U$. The following lemma shows that for every vertex subset $U$, either $U$ or its robust neighborhood of vertices with plenty of edges towards $U$ expands nicely.
\begin{lemma}\label{prop12}
Suppose $\tfrac{1}{n}\ll \eps <\tfrac{1}{5}$ and let $G$ be an $n$-vertex $(\eps,k)$-expander with average degree $d$ and $m=\log^4\tfrac{n}{k}$, and let $U\subseteq V(G)$ with $\tfrac{k}{2}\le |U|\leq \tfrac{n}{2}$.
  Then for any $0<\lambda<d$, either
    \begin{equation*}
        \mathbf{(a)}. \ |N_{G}(U)|\geq \tfrac{d|U|}{\lambda m}, \ \ \ \ \ \text{or} \ \ \ \ \ \mathbf{(b)}. \ |N_{G,\lambda}(U)|\geq \tfrac{|U|}{m}.
    \end{equation*}
\end{lemma}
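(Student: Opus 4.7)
The plan is to argue by contradiction via a double-counting of $e(U,V\setminus U)$. I would assume that both conclusions fail, i.e.\ $|N_G(U)|<d|U|/(\lambda m)$ and $|N_{G,\lambda}(U)|<|U|/m$ simultaneously, and then derive an inconsistency.

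First I would split $N_G(U)$ into its \emph{heavy} part $N_\lambda:=N_{G,\lambda}(U)$ (neighbors sending at least $\lambda$ edges into $U$) and its \emph{light} complement $B:=N_G(U)\setminus N_\lambda$. The failure of (a) bounds $|B|\le |N_G(U)|<d|U|/(\lambda m)$, so the light part contributes at most $\lambda |B|<d|U|/m$ edges out of $U$. The failure of (b) bounds $|N_\lambda|<|U|/m$, and the trivial estimate $|N(v)\cap U|\le |U|$ for $v\in N_\lambda$ yields $e(U,N_\lambda)\le |U|\cdot|N_\lambda|<|U|^2/m$. Adding these gives the upper bound
\[
e(U,V\setminus U) \;<\; \frac{d|U|}{m} + \frac{|U|^2}{m},
\]
which I would want to contradict via a matching lower bound.

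For the lower bound the plan is to invoke robust expansion. Let $F$ be the set of edges from $U$ to $N_\lambda$; the failure of (b) gives $|F|\le |U|\cdot|N_\lambda|<|U|^2/m$. In the regime in which $|F|$ fits inside the robustness budget $d\rho(|U|)|U|$, the $(\eps,k)$-expansion property applied with the edge set $F$ deleted produces $|N_{G\setminus F}(U)|\ge \rho(|U|)|U|$; because $F$ removes every edge from $U$ to $N_\lambda$, this entire surviving neighborhood sits inside $B$, so $|B|\ge \rho(|U|)|U|\ge \eps|U|/\log^2(15n/k)$. Comparing against the target $d|U|/(\lambda m)$ of (a) closes the argument whenever $\lambda$ is not too small relative to $d/(\rho(|U|)m)$. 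In the complementary regime the opposite inequality $|F|>d\rho(|U|)|U|$ combined with $|F|\le |U|\cdot|N_\lambda|$ forces $|N_\lambda|>d\rho(|U|)$, which in the relevant parameter window exceeds $|U|/m$ and directly yields (b).

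The main obstacle I anticipate is the bookkeeping across the two regimes of $\lambda$: the conclusions (a) and (b) trade difficulty as $\lambda$ varies between $1$ and $d$, and one has to verify that in each regime the corresponding conclusion is actually forced by the parameter choices $\lambda<d$, $k/2\le |U|\le n/2$, and $m=\log^4(n/k)$. A careful monotonicity argument combined with the lower bound $\rho(|U|)\ge \eps/\log^2(15n/k)$ should splice the two cases. I do not expect any genuinely new idea beyond this dichotomy; the subtlety is purely in tracking how the robustness threshold $d\rho(|U|)|U|$ compares with the crude bound $|U|^2/m$ on the edges from $U$ to the heavy part.
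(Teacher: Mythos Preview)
Your dichotomy does not close, and the reason is that you chose the wrong edge set to delete. You set $F$ to be the edges from $U$ to the \emph{heavy} part $N_\lambda$, so the only upper bound you have is the crude $|F|\le |U|\cdot|N_\lambda|<|U|^2/m$. In your second regime $|F|>d\rho(|U|)|U|$ this yields $|N_\lambda|>d\rho(|U|)$, but that is strictly weaker than the target $|U|/m$ precisely when $|U|>d\rho(|U|)m$, which is exactly the condition defining this regime; so you get nothing. In your first regime you obtain $|B|\ge \rho(|U|)|U|$, but turning this into $|N_G(U)|\ge d|U|/(\lambda m)$ requires $\lambda\ge d/(\rho(|U|)m)$, and there is no such lower bound on $\lambda$ in the hypothesis. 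Neither regime produces a contradiction in general, and no amount of bookkeeping across the two cases will splice them because the gap is genuine (take, say, $|U|=n/2$, $d=n^{1/3}$, $\lambda=1$).

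The fix is a one-line change: delete the \emph{light} edges instead. If (a) fails, set $X=N_G(U)\setminus N_{G,\lambda}(U)$ and let $F$ be the edges between $U$ and $X$. Then
\[
|F|<\lambda|X|\le \lambda\,|N_G(U)|<\lambda\cdot\frac{d|U|}{\lambda m}=\frac{d|U|}{m}<d\rho(|U|)|U|,
\]
which is \emph{always} inside the robustness budget, with no case split. Robust expansion now gives $|N_{G\setminus F}(U)|\ge \rho(|U|)|U|$, and since every light edge has been removed, $N_{G\setminus F}(U)\subseteq N_{G,\lambda}(U)$. Hence $|N_{G,\lambda}(U)|\ge \rho(|U|)|U|\ge |U|/m$, which is (b). The double-counting of $e(U,V\setminus U)$ is unnecessary.
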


\begin{proof}
    Suppose that $\mathbf{(a)}$ is not true, that is, $|N_{G}(U)|<\tfrac{d|U|}{\lambda m}$. Let $X=N_{G}(U)\setminus N_{G,\lambda}(U)$, so that $|X|\leq |N_{G}(U)|<\tfrac{d|U|}{\lambda m}$.  Let $F$ be the edges of $G$ between $U$ and $X$, so that
\[
        |F|<\lambda|X|<\tfrac{d|U|}{m}<\rho(|U|)|U|d.
\]
    Then we obtain that $\mathbf{(b)}$ holds as
\[
        |N_{G,\lambda}(U)|\ge |N_{G-F}(U)|\geq \rho(|U|)|U|\ge \tfrac{|U|}{m}.
\]
\end{proof}
The following result was first observed in \cite{Mon-cycle}, and we make a slight adaptation for our proof.
\begin{lemma}\label{prop13}
Suppose $\tfrac{1}{n}\ll \eps <\tfrac{1}{5}$ and $x,k\in \mathbb{N}$ with $x\geq 2$.  Let $G$ be an $n$-vertex $(\eps,k)$-robust-expander with average degree $d$, $m=\log^4\tfrac{n}{k}$ and $U, W\subseteq V(G)$ with $k\leq|U|\leq \tfrac{2n}{3}, |W|\leq \tfrac{|U|}{m^x}$. Then for all positive integers $\lambda,s,t$ with $s\ge 8m, t\ge 2\lambda$ and $d\ge 10tm\lambda$, we can find in $G-W$ either
    \begin{itemize}
        \item[$(1)$] $\tfrac{|U|}{s}$ vertex-disjoint stars, whose centers in $U$ and each with $t$ leaves such that all lie in $V(G)\setminus (U\cup W)$, or
        \item[$(2)$] a bipartite subgraph $H$ with vertex classes $U$ and $X\subseteq V(G)\setminus (U\cup W)$ such that $|X|\geq \tfrac{|U|}{4m}$ and every vertex in $X$ has degree at least $\lambda$ in $H$ and every vertex of $U$ has degree at most $2t$ in $H$.
    \end{itemize}
\end{lemma}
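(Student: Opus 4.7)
The plan is to apply Lemma~\ref{prop12} to the set $U$ inside $G$ with the given parameter $\lambda$; this is valid since $\tfrac{k}{2}\leq |U|\leq \tfrac{2n}{3}$ and $\lambda<d$ follows from $d\geq 10tm\lambda$ together with $t,\lambda\geq 1$. The lemma produces the dichotomy: either $(a)$ $|N_G(U)|\geq \tfrac{d|U|}{\lambda m}$, or $(b)$ $|N_{G,\lambda}(U)|\geq \tfrac{|U|}{m}$. These two cases will drive us toward conclusions $(2)$ and $(1)$ of the current lemma respectively, each supplemented by a secondary branching.

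In case $(b)$, I would set $X_0:=N_{G,\lambda}(U)\setminus W$ and note $|X_0|\geq \tfrac{|U|}{m}-|W|\geq \tfrac{|U|}{2m}$ since $|W|\leq \tfrac{|U|}{m^x}\leq \tfrac{|U|}{m^2}$ (using $x\geq 2$). Each $x\in X_0$ already has $\geq \lambda$ neighbors in $U$ inside $G-W$, so the bipartite graph $B_0:=G[U,X_0]$ satisfies the $X$-side degree constraint. To enforce the $U$-side constraint $d_H(u)\leq 2t$, I would split on $U^{\uparrow}:=\{u\in U: d_{B_0}(u)>2t\}$: if $|U^{\uparrow}|\geq \tfrac{|U|}{s}$, then the dense bipartite subgraph $(U^{\uparrow},V\setminus(U\cup W))$ admits a packing of $\tfrac{|U|}{s}$ disjoint $t$-stars by a Hall-type $t$-matching argument, yielding conclusion $(1)$; otherwise, pruning each $u\in U^{\uparrow}$ to exactly $2t$ arbitrary incident edges removes at most $|U^{\uparrow}|$ edges from each $x\in X_0$, and an averaging over $X_0$ shows that at most $\tfrac{|X_0|}{2}$ vertices drop below degree $\lambda$, leaving the required $X$ of size $\geq \tfrac{|U|}{4m}$ for conclusion $(2)$.

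In case $(a)$, I note $|N_G(U)\setminus W|\geq \tfrac{d|U|}{2\lambda m}\geq 5t|U|$ via $d\geq 10tm\lambda$, so the bipartite graph $G[U,V\setminus(U\cup W)]$ has at least $5t|U|$ distinct $U$-neighbors. I would greedily pack disjoint $t$-stars centered in $U$ with leaves in $V\setminus(U\cup W)$. If $\tfrac{|U|}{s}$ stars are found, we are done with $(1)$. Otherwise, letting $L$ denote the leaf set of the maximal greedy and $U^{\star}$ the uncentered vertices of $U$, maximality forces each $u\in U^{\star}$ to have fewer than $t$ neighbors in $V\setminus(U\cup W\cup L)$. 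A double-count then shows that most edges from $U^{\star}$ into $V\setminus(U\cup W)$ accumulate on $L$; averaging over $L$ identifies $\geq \tfrac{|U|}{4m}$ leaves with $U$-degree $\geq \lambda$, forming $X$ for conclusion $(2)$ after a final edge-cap at $2t$ on the $U$-side. The main obstacle will be making both branchings quantitatively tight: the Hall-type $t$-matching in case $(b)$ needs the bound $d_{B_0}(u)>2t$ to be leveraged across $\tfrac{|U|}{s}$ iterations without exhausting leaves, and the leaf-side double-count in case $(a)$ must cleanly separate edges from used centers (which carry no maximality constraint) from those at vertices of $U^{\star}$. Each parameter bound $s\geq 8m$, $t\geq 2\lambda$, $d\geq 10tm\lambda$ is used precisely in one of these branching steps.
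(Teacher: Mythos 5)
Your high-level plan diverges from the paper's: you invoke Lemma~\ref{prop12} on $U$ \emph{first} and then try to massage each branch of its dichotomy into conclusions $(1)$ or $(2)$, whereas the paper first fixes a maximal $t$-star packing (giving $C,L$) \emph{and} greedily builds the bipartite graph $H_r$ by adding $\lambda$-stars centered at vertices of $V(G)\setminus(U\cup W)$ subject to the $2t$-degree cap, and only then applies Lemma~\ref{prop12} to the residual set $U\setminus(C\cup U')$ to derive a contradiction in both branches. The key technical benefit of the paper's ordering is that in $H_r$ the $X$-side degrees equal exactly $\lambda$ by construction, so the constraint ``every vertex of $X$ has degree $\geq\lambda$'' never has to survive a post-hoc pruning step.

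That pruning step is exactly where your argument breaks. In your case $(b)$, you cap the degree of each $u\in U^{\uparrow}$ at $2t$ and claim that averaging shows at most $\tfrac{|X_0|}{2}$ vertices of $X_0$ drop below degree $\lambda$. But there is no bound on $|E(B_0)|$ or on $\sum_{u\in U^{\uparrow}}d_{B_0}(u)$, and the bound $|U^{\uparrow}|<\tfrac{|U|}{s}$ can be far larger than $\lambda$. Concretely, if a handful of vertices in $U^{\uparrow}$ are each adjacent to all of $X_0$ while most $x\in X_0$ have degree exactly $\lambda$ and are incident only to $U^{\uparrow}$, capping to $2t$ leaves only $O(t|U^{\uparrow}|)$ edges and nearly all of $X_0$ falls below $\lambda$. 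The averaging claim therefore does not hold. Your other branch of case $(b)$ also has a gap: knowing each $u\in U^{\uparrow}$ has $>2t$ neighbours in $X_0$ does not yield a Hall-type condition $|N(S)|\geq t|S|$ for all $S\subseteq U^{\uparrow}$ (all of $U^{\uparrow}$ could share the same $2t+1$ neighbours), so the packing of $\tfrac{|U|}{s}$ disjoint $t$-stars is not justified. Finally, your case $(a)$ sketch never controls $N_{G-W}(C)$ when passing from $N_{G-W}(U)$ to what the maximality of the star packing bounds; the paper works with $N_{G-W}(U\setminus C)$ precisely to avoid this, and your ``averaging over $L$'' step reintroduces the same uncontrolled pruning problem as case $(b)$. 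To repair the proof you would essentially need to adopt the paper's incremental construction of $H$ (adding $\lambda$-stars centered outside $U$ under a running $2t$-cap), which makes the $X$-side constraint automatic, and then apply Lemma~\ref{prop12} to $U$ minus the centers $C$ and the $2t$-saturated vertices $U'$ to close both branches by contradiction.
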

\begin{proof}
    First, we take a maximal collection of vertex-disjoint $t$-stars in $G-W$ such that all centers lie in $U$ and leaves outside $U$. Denote by $C,L$ the set of centers and the set of all leaves, respectively, of these stars. If $(1)$ dose not hold, then we can assume that $|C|< \tfrac{|U|}{s}$ and $|L|<\tfrac{|U|t}{s}$. By the maximality, every vertex in $U\setminus C$ has less than $t$ neighbors in $V(G-W)\backslash(U\cup L)$. Thus,
    \begin{equation}\label{UCupp}
        |N_{G-W}(U\setminus C)|\leq |C|+|L|+(|U|-|C|)t\leq 2|U|t.
    \end{equation}
    Second, we aim to construct the bipartite subgraph $H$ through the following process. Let $X_0=\emptyset$, and set $H_0$ to be the graph with vertex set $U\cup X_0$ and no edges. Let $r=|V(G)\setminus (U\cup W)|$ and label the vertices of $V(G)\setminus (U\cup W)$ arbitrarily $v_1,v_2,\ldots, v_r$. For each $i\geq 1$, if possible pick a star $S_i$ in $G-W$ with center $v_i$ and $\lambda$ leaves in $U$ such that $H_{i-1}\cup S_i$ has maximum degree at most $2t$, and let $H_i=H_{i-1}\cup S_i$ and $X_i=X_{i-1}\cup \{v_i\}$, while otherwise we set $H_i=H_{i-1}$ and $X_i=X_{i-1}$. Finally, we shall claim that
$(2)$ holds for $H_r$ with bipartition $(U,X_r)$.

By the construction, $\Delta(H_i)\leq 2t$ holds for each $i\in[r]$, and every vertex $v_i$ in $X_r$ has degree exactly $\lambda$ in $H_r$. It remains to show that $|X_r|\geq \tfrac{|U|}{4m}$. Suppose for contradiction that $|X_r|< \tfrac{|U|}{4m}$  and let $U'=\{v\in U\setminus C: d_{H_r}(v)=2t\}$. Due to the maximality of the family of vertex-disjoint stars defining $C$ and $L$ we picked before, each vertex in $U'$ has fewer $t$ neighbors in $X_r\backslash(L\cup W)$. That is, it must have at least $t$ neighbors in $X_r\cap L$. Recall that for each $v\in X_r\cap L$, we have $d_{H_r}(v)= \lambda$. Thus,
\begin{equation*}
    |U'|\leq \tfrac{\lambda|X_r\cap L|}{t}\leq \tfrac{\lambda|X_r|}{t}\leq \tfrac{\lambda |U|}{4tm}.
\end{equation*}
Let $B=C\cup U'$. Then $|B|\leq \tfrac{|U|}{s}+ \tfrac{\lambda |U|}{4tm}\leq \tfrac{|U|}{4m}$, and thus $|U\setminus B|\geq \tfrac{2|U|}{3}\geq \tfrac{k}{2}$. Applying Lemma \ref{prop12} to $U\setminus B$, we have either
\begin{equation*}
    |N_{G-W}(U\setminus B)|\geq \tfrac{d|U\setminus B|}{\lambda m}-|W|\geq \tfrac{d|U\setminus B|}{2\lambda m}, \ \ \text{or} \ \ |N_{G-W,\lambda}(U\setminus B)|\geq \tfrac{|U|}{m}-|W|\geq \tfrac{|U|}{2m}.
\end{equation*}
Since
\begin{equation*}
    |N_{G-W}(U\setminus C)|\geq |N_{G-W}(U\setminus B)|-|U|\geq \tfrac{d|U\setminus B|}{2\lambda m}-|U|\geq \tfrac{d|U|}{3\lambda m}-|U|>2|U|t,
\end{equation*}
a contradiction to (\ref{UCupp}). Therefore, $|N_{G-W,\lambda}(U\setminus B)|\geq   \tfrac{|U|}{2m}>|B|+|X_r|$. Note that every vertex $v$ in $N_{G-W,\lambda}(U\setminus B)\setminus (B\cup X_r)$ has at least $\lambda$ neighbors in $U\setminus B$ in $G-W$ while every vertex in $U\setminus B$ has degree strictly less than $2t$ in $H_r$. Thus we can further add a $\lambda$-star centered at $v$ to $H_r$, a contradiction to the maximality of $H_r$.
\end{proof}

Lemma \ref{prop18} depicts that any vertex $U$ in an $(\eps,k)$-expander $G$ contains a ``not small'' subset $U'$ such that $U'$ also expands well, and so that $U'$ captures lots of the expansion properties of $U$.
\begin{lemma}\label{prop18}
    Suppose $\tfrac{1}{n},\tfrac{1}{d}\ll \eps <\tfrac{1}{5}$ and $k\in \mathbb{N}$. Let $G$ be an $n$-vertex $(\eps,k)$-robust-expander with average degree $d$ and $U\subseteq V(G)$ with $\tfrac{k}{2}
    \le|U|\leq \tfrac{2n}{3}$. Let $m=\log^4\tfrac{n}{k}$. Then for any positive constant $\kappa\le \tfrac{d}{m}$, there exists $U'\subseteq U$ with $|N_{G}(U')|\geq \kappa|U'|$ and $|U'|> \tfrac{|U|}{\kappa m}$.
\end{lemma}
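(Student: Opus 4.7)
The plan is to apply Lemma~\ref{prop12} to $U$ with a carefully chosen degree threshold, and then extract $U'$ via a probabilistic selection on the resulting bipartite structure. Specifically, I would set $\lambda = d/(\kappa m)$ and invoke Lemma~\ref{prop12}. In case (a), we have $|N_G(U)| \ge d|U|/(\lambda m) = \kappa|U|$, so we may simply take $U' = U$, which satisfies $|U| > |U|/(\kappa m)$ in the interesting range $\kappa m > 1$ (outside this range the conclusion is trivial).

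The substance of the proof lies in case (b), which provides a set $X = N_{G,\lambda}(U) \subseteq V(G)\setminus U$ with $|X| \ge |U|/m$, each $x \in X$ having at least $\lambda = d/(\kappa m)$ neighbours in $U$. To extract the required $U'$, I would include each $u \in U$ in a random subset independently with probability $p = 1/(\kappa m)$. Then $\mathbb{E}|U'| = |U|/(\kappa m)$, and for each $x \in X$,
\[
\Pr[x \notin N(U')] \le (1-p)^{\lambda} \le \exp\bigl(-d/(\kappa^2 m^2)\bigr).
\]
Combining linearity of expectation with standard Chernoff concentration on $|U'|$, one seeks an outcome realising simultaneously $|U'| > |U|/(\kappa m)$ and $|N(U')| \ge \kappa|U'|$, using the lower bound $|X| \ge |U|/m$ to convert the hitting estimate into the desired expansion ratio.

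The main obstacle I anticipate is the tight regime where $\kappa$ is close to the ceiling $d/m$: there the product $p\lambda = d/(\kappa^2 m^2)$ can be $O(1)$ or even $o(1)$, so a single round of random selection does not quite deliver ratio $\kappa$ while keeping the size above $|U|/(\kappa m)$. To overcome this I would iterate: first extract $U_1 \subseteq U$ of expansion factor roughly $\sqrt{d}/m$ by the argument above (which gives the full bound whenever $\kappa \le \sqrt{d}/m$), then reapply Lemma~\ref{prop12} to $U_1$ with a new threshold reflecting the concentrated degree of $X$-vertices into $U_1$, obtaining $U_2 \subseteq U_1$ of strictly higher expansion; after $O(1)$ iterations the target $\kappa$ is reached without the size dropping below $|U|/(\kappa m)$. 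Alternatively, a dependent-random-choice argument on the bipartite graph between $U$ and $X$, choosing $U'$ as the common $U$-neighbourhood of a random multiset drawn from $X$, likely achieves both conclusions in a single pass and is the cleanest route; balancing the multiset size against the Jensen lower bound on $|U'|$ is where the careful choice of parameters has to happen.
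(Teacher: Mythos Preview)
Your approach is genuinely different from the paper's, and the case-(b) branch does not close. The paper does not use Lemma~\ref{prop12} or any sampling at all; it gives a one-line deterministic argument. Take $U'\subseteq U$ \emph{maximal} subject to $|N_G(U')|\ge\kappa|U'|$ (the empty set qualifies). If $U'\neq U$, maximality forces every $u\in U\setminus U'$ to have fewer than $\kappa$ neighbours outside $N_G[U']$. Letting $F$ be all such edges, $|F|<\kappa|U|\le d|U|/m\le d\,\rho(|U|)\,|U|$, so the \emph{robust} side of Definition~\ref{def_sub} applied to $U$ itself yields
\[
\rho(|U|)\,|U|\;\le\;|N_{G-F}(U)|\;\le\;|N_G(U')|\;<\;\kappa(|U'|+1),
\]
and hence $|U'|>|U|/(\kappa m)$ in one stroke.

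In your case~(b) the two desired conclusions together force $|N_G(U')|>|U|/m$, yet the only neighbourhood you control is through $X=N_{G,\lambda}(U)$ with the stated bound $|X|\ge |U|/m$. Taking $p=1/(\kappa m)$ as you propose, one has $p\lambda=d/(\kappa m)^2$; when $\kappa$ is near the ceiling $d/m$ this equals $1/d$, so the expected covered fraction of $X$ is $1-e^{-p\lambda}\approx 1/d$, giving $|N_G(U')\cap X|\approx |X|/d$, hopelessly short of $|U|/m$. (Even away from the ceiling the calculation is tight: with $|X|=|U|/m$ and $|U'|\approx p|U|$, the requirement $|X|(1-e^{-p\lambda})\ge \kappa p|U|$ reads $1-e^{-p\lambda}\ge 1$, so no choice of $p\ge 1/(\kappa m)$ works from this bound alone.) Your proposed fixes do not rescue this. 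Iteration needs to re-apply Lemma~\ref{prop12} to the intermediate set $U_1$, but $|U_1|\approx |U|/(\kappa_1 m)$ may already be far below $k/2$ (take $|U|$ near $k/2$), and then neither the expander inequality nor Lemma~\ref{prop12} is available. Dependent random choice is aimed at producing a set whose small subsets have many common neighbours, which is the wrong direction; it does not yield a single subset with large neighbourhood-to-size ratio. The missing idea is precisely to use the edge-deletion robustness on $U$ directly, rather than to pass to the auxiliary set $X$.
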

\begin{proof}
    We may assume that $|U|\geq 1$, otherwise we just take $U'=\emptyset$ satisfying the required conditions. Let $U'\subseteq U$ be a maximal set subject to $|N_{G}(U')|\geq \kappa|U'|$, and this is possible as $U'=\emptyset$ satisfies these conditions. If $U=U'$, then $U$ satisfies the condition itself. Next, we assume that $U\neq U'$. By the maximality of $U'$, we have that $|N_{G}(U')|<\kappa(|U'|+1)$, and every vertex $u$ in $U\setminus U'$ satisfying that $d_{G-N_G[U']}(u)< \kappa$.  Let $F$ be the set consisting of edges $uv$ with $u\in U\setminus U'$ and $v\in V(G)-N_G[U']$. Then $|F|<\kappa |U\setminus U'|<\tfrac{|U|d}{m}<\rho(|U|)|U|d$. As $G$ is an $(\eps,k)$-robust-expander, we obtain
    \begin{equation*}
        \rho(|U|)|U|\leq |N_{G-F}(U)|\leq |N_{G}(U')|<\kappa(|U'|+1).
    \end{equation*}
    Hence, $|U'|\geq \tfrac{\rho(|U|)|U|}{2\kappa}>\tfrac{|U|}{\kappa m}$.
\end{proof}
We also introduce the well-known martingale concentration result, which will be used to prove Lemma \ref{randomball}.
\begin{lemma}[\cite{Alon}]\label{Lip}
    Suppose that $X:\prod_{i=1}^n\Omega_i\rightarrow \mathbb{R}$ is $k$-Lipschitz. Then, for each $t>0$,
    \begin{equation*}
        \mathbb{P}(|X-\mathbb{E}X|>t)\leq 2\exp\left(\tfrac{-2t^2}{k^2N}\right).
    \end{equation*}
\end{lemma}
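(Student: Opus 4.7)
The plan is to deduce the bound from the standard Azuma--Hoeffding martingale concentration inequality, in its sharp ``bounded differences'' (McDiarmid) form, applied to the Doob martingale of $X$. First I would let $\mathcal{F}_i$ be the $\sigma$-algebra generated by $(\omega_1,\dots,\omega_i)$ and define $X_i=\mathbb{E}[X\mid\mathcal{F}_i]$ for $0\le i\le n$, so that $(X_i)$ is a martingale with $X_0=\mathbb{E}X$ and $X_n=X$ (with $n=N$). The entire proof then reduces to bounding the range of each conditional increment $Y_i=X_i-X_{i-1}$ given $\mathcal{F}_{i-1}$ and combining the resulting moment generating function estimates via the tower property.

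The key step is the bound on the conditional range. Writing
\[
M_i=\sup_{\omega_i}\mathbb{E}[X\mid \omega_1,\dots,\omega_{i-1},\omega_i],\qquad m_i=\inf_{\omega_i}\mathbb{E}[X\mid \omega_1,\dots,\omega_{i-1},\omega_i],
\]
a simple coupling of two independent copies of the tail coordinates $(\omega_{i+1},\dots,\omega_n)$ together with the $k$-Lipschitz hypothesis on $X$ forces $M_i-m_i\le k$. Hence, conditional on $\mathcal{F}_{i-1}$, the martingale difference $Y_i$ is a mean-zero random variable lying in an interval of length at most $k$. The sharp form of Hoeffding's lemma then yields $\mathbb{E}[e^{\lambda Y_i}\mid\mathcal{F}_{i-1}]\le \exp(\lambda^2 k^2/8)$ for every $\lambda>0$.

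Iterating via the tower property gives $\mathbb{E}[e^{\lambda(X-\mathbb{E}X)}]\le \exp(\lambda^2 N k^2/8)$, and Markov's inequality produces the Chernoff-type bound $\mathbb{P}(X-\mathbb{E}X>t)\le \exp(-\lambda t+\lambda^2 N k^2/8)$ for all $\lambda>0$. Optimising at $\lambda=4t/(Nk^2)$ gives the one-sided tail $\exp(-2t^2/(k^2N))$, and applying the same argument to $-X$ together with a union bound over the two tails delivers the stated two-sided inequality. The only delicate point is preserving the factor $2$ in the exponent, which requires invoking Hoeffding's lemma in its sharp form (constant $1/8$ in the MGF bound) rather than the looser estimate obtained by bounding $|Y_i|\le k$ and using $\exp(\lambda^2 k^2/2)$; since the lemma is quoted as a standard textbook fact from \cite{Alon}, this refinement can be used directly without further work.
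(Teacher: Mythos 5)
Your argument is correct: it is the standard proof of the Azuma--Hoeffding/McDiarmid bounded-differences inequality via the Doob martingale, the coupling bound $M_i-m_i\le k$ on each conditional increment, the sharp form of Hoeffding's lemma, and optimisation of the Chernoff parameter, which does recover the constant $2$ in the exponent (with $N=n$). The paper itself offers no proof -- it quotes the lemma directly from \cite{Alon} -- so there is nothing to compare against beyond noting that yours is precisely the textbook derivation.
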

\begin{lemma}\label{randomball1}
Suppose $\tfrac{1}{n},\tfrac{1}{d}\ll \eps <\tfrac{1}{5}$ and $x,k,\kappa\in \mathbb{N}$ with $x\geq 2, d\geq \log^{100}n$.
Let $G$ be an $n$-vertex $(\eps,k)$-expander with average degree $d$ and $m=\log^4\tfrac{n}{k}$ and choose $\kappa\ge m^8$. Given subsets $U,W\subseteq V(G)$ with $|N_{G}(U)|\geq \kappa |U|$ for $k\leq|U|\leq \tfrac{2n}{3}$ and $|W|\leq \tfrac{|U|}{m^x}$, and let $V$ be a random subset chosen by including each vertex independently at random with probability $\tfrac{1}{2}$. Then, with probability $1-\exp\left(-\Omega(\tfrac{\kappa|U|}{m^{6}})\right)$,
    \begin{equation*}
        |B_{G-W}^{m^2}(U,V)|>\tfrac{|V|}{2}.
    \end{equation*}
\end{lemma}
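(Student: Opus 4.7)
The plan is to construct $B^{m^2}_{G-W}(U,V)$ via a breadth-first search through the random set $V$ starting from $U$, and argue that with the claimed probability it grows to exceed $|V|/2$ within $m^2$ layers. Set $T_0 := U$ and iteratively $T_i := T_{i-1} \cup (N_{G-W}(T_{i-1}) \cap V)$, so that $T_i = B^i_{G-W}(U,V)$. For the base case, the hypothesis $|N_G(U)| \geq \kappa|U|$ together with $|W| \leq |U|/m^x \leq \kappa|U|/2$ yields $|N_{G-W}(U)| \geq \kappa|U|/2$; since the $V$-membership of each vertex outside $U$ is an independent $\mathrm{Bernoulli}(1/2)$, Chernoff gives $|T_1| \geq \kappa|U|/4 \geq m^8|U|/4$ with probability at least $1 - \exp(-\Omega(\kappa|U|))$, providing a strong initial expansion.

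For the inductive growth, suppose $|T_i| \leq |V|/2 \leq 2n/3$ (otherwise we are done). Apply Lemma~\ref{prop13} to $T_i$ with parameters $\lambda = m^4$, $t = 2\lambda$ and $s = 8m$, valid since $d \geq \log^{100} n \geq 10tm\lambda$. The conclusion yields either $|T_i|/(8m)$ vertex-disjoint $2m^4$-stars with centres in $T_i$ and leaves in $V(G)\setminus (T_i \cup W)$, or a bipartite subgraph whose external part $X\subseteq V(G)\setminus (T_i\cup W)$ has size at least $|T_i|/(4m)$, with every vertex of $X$ having at least $m^4$ neighbours in $T_i$. In either case every candidate vertex (a star leaf or an element of $X$) is adjacent to $T_i$ and joins $T_{i+1}$ precisely when its $V$-bit is $1$; among those candidates whose $V$-bits have not yet been exposed in earlier BFS stages, the bits are independent $\mathrm{Bernoulli}(1/2)$, so a Chernoff bound gives $|T_{i+1}|\geq (1+c_0/m)|T_i|$ with failure probability at most $\exp(-\Omega(|T_i|/m^5))$ for some absolute $c_0>0$. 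Iterating at most $m^2$ times, $|T_i|$ grows from $m^8|U|/4$ by a multiplicative factor of $(1+c_0/m)^{m^2}\geq \exp(\Omega(m))$, which comfortably exceeds $|V|/2\leq n$ since $m=\log^4(n/k)$ and $|U|\geq k$. A union bound over the $m^2$ stages gives a total failure probability of at most $m^2\exp(-\Omega(\kappa|U|/m^5)) \leq \exp(-\Omega(\kappa|U|/m^6))$, as required.

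The main obstacle is the \emph{freshness} step: one must ensure that, within the leaves or external part $X$ produced by Lemma~\ref{prop13} at stage $i$, a constant fraction of the candidate vertices have $V$-bits that are still independent of the BFS history. The difficulty is that a candidate vertex may already lie in $N_{G-W}(T_{i-1})$ and hence have had its bit exposed (as $0$) during an earlier stage, so that Chernoff cannot be directly applied. To handle this, the $V$-bits must be revealed layer-by-layer in a carefully chosen order while tracking the set of previously exposed vertices; when that exposed set could be comparable to the candidate set one replaces the plain Chernoff bound by Azuma--Hoeffding via Lemma~\ref{Lip}, applied to the vertex-exposure martingale whose Lipschitz constant is controlled by the per-layer structure, as in the adaptation of random-partition techniques of Buci\'c--Montgomery \cite{Mon-cycle}.
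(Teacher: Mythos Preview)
Your overall strategy---grow a BFS ball through $V$, use Lemma~\ref{prop13} at each stage to gain a factor $(1+\Omega(1/m))$, and take a union bound over $m^2$ stages---is the same as the paper's, but there is a real gap at exactly the point you flag as the ``main obstacle'', and your suggested fix does not close it. In your setup $T_{i+1}=T_i\cup(N_{G-W}(T_i)\cap V)$ uses the \emph{same} random set $V$ at every stage, so when you apply Lemma~\ref{prop13} to $T_i$ the candidate vertices it produces may all have had their $V$-bits already exposed, and exposed \emph{as $0$}: any $v\in N_{G-W}(T_{i-1})\setminus T_i$ satisfies $v\notin V$ deterministically given the history. There is no reason a positive fraction of the candidates should be fresh; nothing in Lemma~\ref{prop13} places its output outside $N_{G-W}(T_{i-1})$. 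When the candidates are all stale, no concentration inequality---Chernoff, Azuma, or Lemma~\ref{Lip}---can yield growth, because the conditional expectation of $|T_{i+1}\setminus T_i|$ over those candidates is zero. The issue is not variance but a collapsing conditional mean, so swapping Chernoff for a martingale bound does nothing.

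The paper resolves this with the Buci\'c--Montgomery layering trick you allude to but never implement: it realises $V$ as $V_1\cup\cdots\cup V_{m^2}$ where the $V_j$ are \emph{independent} random subsets (each vertex in $V_j$ with probability $p\approx 1/(11m^2)$ for $j<m^2$ and $q=9/20$ for $j=m^2$, tuned so that $V$ has the correct marginal), and at stage $i$ uses only the fresh coins in $V_i$. The growth mechanism is also different from yours: a vertex $v\in N_{G-W}(B_i)$ enters $B_{i+1}$ whenever it has a neighbour in $B_i\cap V_i$, so the relevant random bits live on $B_i$ itself (already fixed by $V_1,\dots,V_{i-1}$) rather than on the new boundary. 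This makes $B_i$ genuinely independent of $V_i$, after which Lemma~\ref{prop13} applied to $Z=B_i$ gives either stars whose centres lie in $B_i$ (Chernoff on $|C\cap V_i|$) or a bipartite $H$ with $\Delta_H\le 2t$ on the $B_i$ side (Lemma~\ref{Lip} with Lipschitz constant $2t$). Without this independent-layer decomposition your inductive step is unjustified.
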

\begin{proof}
Take $q=\frac{9}{20}, \ell=m^2$ and $p$ be such that $1-(1-p)^{\ell-1}(1-q)=\frac{1}{2}$. Then $(1-p)^{m^2-1}=\tfrac{10}{11}$ and thus $p\geq \tfrac{1}{11m^2}$.
For each $i\in[m^2]$, let $V_i$ be a random subset of $V(G)$ with each vertex included independently at random with probability $p$ if $i\leq m^2-1$ and with probability $q$ if $i=m^2$. Set $V=V_1\cup \ldots \cup V_{m^2}$. Recall that each vertex is included in $V$ independently at random with probability $\tfrac{1}{2}$. For each $0\leq i\leq m^2$, let $B_i$ be the set of vertrices of $G$ which can be reached via a path in $G-W$ which starts in $U$ and has length at most $i$ and whose internal vertices (if there are any) are in $V_1\cup \ldots \cup V_{i-1}$. In particular, let $B_0=U$ and $B_1=B_{G-W}(U)$. Note that $B_0\subseteq B_1\subseteq \ldots \subseteq B_{m^2}$.
 By the construction of $B_i$, we know that it is completely determined by the sets $U,V_1,\ldots, V_{i-1}$ while independent of $V_i$, and any vertex in $N_{G-W}(B_i)$ with a neighbor in $B_i$ that gets sampled into $V_i$ belongs to $B_{i+1}$. We aim to evaluate the value of $|B_{m^2}\cap V_{m^2}|$ in order to get the lower bound of $|B_{G-W}^{m^2}(U,V)|$.

Now we have the following claim, which states that either $B_i$ is very large or $B_{i+1}$ is likely to be larger than $B_i$. Denote by $N_G(v,A):=N_G(v)\cap A$ for some $A\subseteq V(G)$.
    \begin{claim}\label{largep}
        For each $1\leq i\leq m^2$, with probability $1-\exp\left(-\Omega(\tfrac{\kappa|U|}{m^{6}})\right)$, either $|B_i|\geq \tfrac{2n}{3}$ or $|B_{i+1}\setminus B_i|\geq \tfrac{|B_i|}{128m}$.
    \end{claim}
    \begin{proof}[Proof of Claim]
       Observe that
        \begin{equation*}
            \{v\in N_{G-W}(B_i): N_{G-W}(v,B_i)\cap V_i\neq \emptyset\}\subseteq B_{i+1}\setminus B_i.
        \end{equation*}
        We shall show that, for any set $Z\subseteq V(G)$ with $|Z|\leq \tfrac{2n}{3}$ and $B_1\subseteq Z$, the following holds.
        \begin{equation}\label{hpro}
            \mathbb{P}\left(|\{v\in N_{G-W}(Z):N_{G-W}(v,Z)\cap V_i\neq \emptyset\}|\geq \tfrac{|Z|}{128m}\right)\geq 1-\exp\left(-\Omega(\tfrac{|B_1|}{m^{6}})\right).
        \end{equation}
        Hence, for all $1\leq i\leq m^2-1$
        \begin{align}
         &\mathbb{P}\left(|B_i|\geq \tfrac{2n}{3} \ \ \text{or} \ \ |B_{i+1}\setminus B_i|\geq \tfrac{|B_i|}{128m}\right) \geq \mathbb{P}\left(|B_{i+1}\setminus B_i|\geq \tfrac{|B_i|}{128m} \mid |B_i|\leq \tfrac{2n}{3} \right) \nonumber \\
         &\geq \mathbb{P}\left(|\{v\in N_{G-W}(B_i): N_{G-W}(v,B_i)\cap V_i\neq \emptyset\}|\geq \tfrac{|B_i|}{128m} \mid |B_i|\leq \tfrac{2n}{3}\right) \nonumber \\
         &\geq 1-\exp\left(-\Omega(\tfrac{|B_1|}{m^{6}})\right)\geq 1-\exp\left(-\Omega(\tfrac{\kappa|U|}{m^{6}})\right), \nonumber
        \end{align}
       where the last second inequality we used $|B_1|=|B_{G-W}(U)|\geq \kappa|U|-|W|\ge \tfrac{\kappa|U|}{2}$. Let $Z\subseteq V(G)$ with $|Z|\leq \tfrac{2n}{3}$ and $B_1\subseteq Z$. As $|Z|\leq \tfrac{2n}{3}$ and $|W|\leq \tfrac{|U|}{m^x}\leq \tfrac{|B_1|}{m^x}\leq \tfrac{|Z|}{m^x}$, we can apply Lemma \ref{prop13} to $Z$ and $W$ with
       \[\lambda=m^2,~s=8m,
    ~t=8m^2,\]
    to show that (\ref{hpro}) holds in either case as follows.

       Suppose $G-W$ contains $\tfrac{|Z|}{8m}$ vertex-disjoint stars with $8m^2$ leaves, with the center in $Z$ and all leaves $V(G)\setminus Z$. Let $C\subseteq Z$ be the set of centers of these stars and $|C|=\tfrac{|Z|}{8m}$. Note that
       \begin{equation}\label{chn}
           |\{v\in N_{G-W}(Z):N_{G-W}(v,Z)\cap V_i\neq \emptyset\}|\geq 8m^2|C\cap V_i|.
       \end{equation}
       By the Chernoff bound and $p\geq \tfrac{1}{11m^2}$, with probability at least $1-\exp\left(-\tfrac{p|C|}{8}\right)=1-\exp\left(\Omega(\tfrac{|Z|}{m^3})\right)$, we have $|C_i\cap V_i|\geq \tfrac{p|C|}{2}\geq \tfrac{|Z|}{240m^3}$. Combining this with (\ref{chn}) we have that (\ref{hpro}) holds.

    Suppose that there is a bipartite subgraph $H\subseteq G-W$ with partition $Z$ and $X$ such that $|X|\geq \tfrac{|Z|}{4m}$ and every vertex in $X$ has degree at least $m^2$ in $H$ and every vertex of $Z$ has degree at most $16m^2$ in $H$. For each $v\in  X$, the probability that $v$ has no neighbors in $H$ in $V_i$ is at most $(1-p)^{m^2}\leq e^{-pm^2}\leq e^{-1/11}\leq \tfrac{15}{16}$. Let $Y$ be the random variable counting the number of vertices of $X$ having a neighbor in $V_i$ in $H$, so that $\mathbb{E}[Y]\geq \tfrac{|X|}{16}$. Observe that $Y$ is $16m^2$-Lipschitz since for each $v\in Z$, the event $\{v\in V_i\}$ affects $Y$ by at most $d_H(v)\leq 16m^2$. Hence by Lemma \ref{Lip}
    \begin{equation*}
        \mathbb{P}\left(Y<\tfrac{|X|}{32}\right)\leq \mathbb{P}\left(Y<\mathbb{E}[Y]-\tfrac{|X|}{32}\right)\leq 2\exp\left(-\tfrac{|X|^2}{2^{11}m^{4}|Z|}\right)\leq \exp\left(-\Omega(\tfrac{|Z|}{m^{6}})\right).
    \end{equation*}
    Each vertex in $X$ with a neighbor in $V_i$ in $H$ lies in $\{v\in N_{G-W}(Z):N_{G-W}(v,Z)\cap V_i\neq \emptyset\}$, so with probability $\exp\left(-\Omega(\tfrac{|Z|}{m^{6}})\right)$, we have $|\{v\in N_{G-W}(Z):N_{G-W}(v,Z)\cap V_i\neq \emptyset\}|\geq Y\geq \tfrac{|X|}{32}\geq \tfrac{|Z|}{128m}$, and thus (\ref{hpro}) holds.
    \end{proof}
    As $B_{m^2}$ and $V_{m^2}$ are independent and $\frac{2}{3}q=\frac{3}{10}>\frac{4}{15}$, by Chernoff's bound, we have that
    \begin{equation*}
        \mathbb{P}\left(|B_{m^2}\cap V_{m^2}|\leq \frac{4}{15}n \mid |B_{m^2}|\geq \frac{2}{3}n\right)\leq \mathbb{P}\left( \mathsf{Bin} \big(\frac{2}{3}n,q\big)\le \frac{4}{15}n \right)\le e^{-\Theta(n)}.
    \end{equation*}
    As $\mathbb{P}\left(|V|\geq \frac{8}{15}n \right)\leq e^{-\Theta(n)}$.
    In conclusion, we have that
    \begin{itemize}
        \item[$(i)$] for each $i\in [m^2-1]$, $|B_i|\geq \tfrac{2n}{3}$ or $|B_{i+1}\setminus B_i|\geq \tfrac{|B_i|}{128m}$, and
        \item[$(ii)$] $|B_{m^2}|<\tfrac{2n}{3}$ or $|B_{m^2}\cap V_{m^2}|> \frac{4}{15}n$, and
        \item[$(iii)$] $|V|\leq \frac{8}{15}n$
    \end{itemize}
    with probability at least
    \begin{equation*}
        1-m^2\exp\left(-\Omega(\tfrac{\kappa|U|}{m^{6}})\right)-e^{-\Theta(n)}\geq 1-\exp\left(-\Omega(\tfrac{\kappa|U|}{m^{6}})\right).
    \end{equation*}
    On the other hand, if $(i)-(iii)$ all hold, then for each $i\in[m^2-1]$, we have
    \begin{equation*}
        |B_i|\geq \min\{\tfrac{2n}{3},(1+\tfrac{1}{128m})^i|U|\}.
    \end{equation*}
    Let $i=m^2$, we have $|B_{m^2}|\geq \tfrac{2n}{3}$, and therefore, combining $(ii)-(iii)$, we have $|B_{m^2}\cap V_{m^2}|>\tfrac{V}{2}$.
    Hence, $|B_{G-W}^{m^2}(U,V)|>\tfrac{V}{2}$ with probability at least $1-\exp\left(-\Omega(\tfrac{\kappa|U|}{m^{6}})\right)$.
\end{proof}

\begin{proof}[Proof of Lemma~\ref{randomball}]
    A set $U'\subseteq V(G)$ is a \emph{well-expanding set} in $G$ if $|N_{G-W}(U')|\geq m^8|U'|$. First, we claim that Lemma \ref{randomball} is true for all well-expanding sets. Given a non-empty well-expanding set $U_1\subseteq V(G)$ and a vertex set $W$ with $|W|\leq \tfrac{|U_1|}{m^x}$, Lemma \ref{randomball1} applied to $U_1$ with $\kappa=m^8$ implies that the following fails with probability at most $\exp\left(-\Omega(|U_1|m^2)\right)$
    \begin{equation}\label{inball}
        |B^{m^2}_{G-W}(U_1,V)|>\tfrac{|V|}{2}.
    \end{equation}
    By union bound, we have
\begin{align}
\sum_{(U_1,W)}\exp\left(-\Omega(|U_1|m^2)\right)&\leq \sum_{j=k}^n\binom{n}{j}\binom{n}{j/m^x}\exp\left(-\Omega(jm^2)\right)
\nonumber \\
&\leq \sum_{j=k}^n (\tfrac{en}{j})^{2j}\exp\left(-\Omega(jm^2)\right) \nonumber \\
&\leq \sum_{j=k}^n \exp\left(3j\log\tfrac{n}{k}-\Omega(jm^2)\right)=\sum_{j=k}^n \exp\left(-\Omega(jm^2)\right)=o\left(\tfrac{1}{n}\right), \nonumber
\end{align}
Thus, with probability $1-o\left(\tfrac{1}{n}\right)$, we obtain that (\ref{inball}) holds for every well-expanding set $U_1$ and $W$ with $|W|\leq \tfrac{|U_1|}{m^x}$, as claimed.

Let $U_2\subseteq V(G)$ with $km^9\le |U_2|\leq \tfrac{2n}{3}$ and let $W\subseteq V(G)$ with $|W|\leq \tfrac{|U_2|}{m^x}$. By Lemma \ref{prop18} with $\kappa=m^{8}$, there is a set $U_2'\subseteq U_2$ such that $|N_{G}(U_2')| \geq m^8|U_2'|$ and $|U_2'|\geq \tfrac{|U_2|}{m^9}\ge k$.
Therefore as $|W|\leq \tfrac{|U_2|}{m^x}\le \tfrac{|U_2'|}{m^{x-9}}$ and $x\ge 11$, one can apply Lemma~\ref{randomball1} with $(x, U, W)=(x-9, U_2', W)$ to get
\[
    |B_{G-W}^{m^2}(U_2,V)|\geq |B_{G-W}^{m^2}(U_2',V)|>\tfrac{|V|}{2}.
\]
\end{proof}
\end{appendix}

\end{document}